\let\emph\undefined
\newcommand{\emph}[1]{\textsl{#1}}
\DeclareMathAlphabet{\mathpzc}{OT1}{pzc}{m}{it}
\numberwithin{equation}{section}
\numberwithin{equation}{section}
\newenvironment{myenumerate}{\begin{enumerate}[topsep=2pt,parsep=2pt,partopsep=2pt,itemsep=0pt,label={\normalfont(\alph*)}]\itemsep0pt}{\end{enumerate}}
\newenvironment{xenumerate}{\begin{enumerate}[topsep=2pt,parsep=2pt,partopsep=2pt,itemsep=0pt,label={\normalfont(\arabic*)}]\itemsep0pt}{\end{enumerate}}
\newenvironment{pnum}{\begin{enumerate}[topsep=2pt,parsep=2pt,partopsep=2pt,itemsep=0pt,label={(\roman{*})}]}{\end{enumerate}}
\newtheoremstyle{style1}
  {13pt}
  {13pt}
  {}
  {}
  {\normalfont\bfseries}
  {.}
  {.5em}
  {}
\theoremstyle{style1}
\newtheorem{definition}[equation]{Definition}
\newtheorem{example}[equation]{Example}
\newtheorem{remark}[equation]{Remark}
\newtheorem{preremarks}[equation]{Remarks}
\newenvironment{remarks}[1][]{\begin{preremarks} \begin{myenumerate} }{  \end{myenumerate} \end{preremarks} }
\newtheoremstyle{style2}
  {13pt}
  {13pt}
  {\slshape}
  {}
  {\normalfont\bfseries}
  {.}
  {.5em}
  {}
\theoremstyle{style2}
\newtheorem{lemma}[equation]{Lemma}
\newtheorem{theorem}[equation]{Theorem}
\newtheorem{proposition}[equation]{Proposition}
\newtheorem{corollary}[equation]{Corollary}
\newcommand{\myforall}{\quad \text{for all}\quad }
\newcommand{\Sym}{\operatorname{Sym}}
\newcommand{\HSym}{\operatorname{HSym}}
\newcommand{\id}{\operatorname{id}}
\newcommand{\catf}[1]{{\normalfont\textbf{#1}}}
\newcommand{\SpanGrpd}{\catf{SpanGrpd}}
\newcommand{\FinGrpd}{\catf{FinGrpd}}
\newcommand{\Vect}{\catf{Vect}}
\newcommand{\FinVect}{\catf{FinVect}}
\newcommand{\RepGrpd}[1]{\catf{VecBun}_{#1}\catf{Grpd}}
\newcommand{\Cob}{\catf{Cob}}
\newcommand{\cFrob}{\catf{cFrob}}
\newcommand{\spr}[1]{\left\langle #1 \right\rangle}
\newcommand{\tr}{\operatorname{tr}}
\newcommand{\PBun}{\catf{PBun}}
\newcommand{\Par}{\operatorname{Par}}
\newcommand{\sphere}{\mathbb{S}}
\newcommand{\Aut}{\operatorname{Aut}}
\newcommand{\End}{\operatorname{End}}
\newcommand{\td}{\text{\normalfont d}}
\let\Phi\undefined\DeclareMathSymbol{\Phi}{\mathalpha}{letters}{"08}
\let\Psi\undefined\DeclareMathSymbol{\Psi}{\mathalpha}{letters}{"09}
\let\Sigma\undefined\DeclareMathSymbol{\Sigma}{\mathalpha}{letters}{"06}
\let\Xi\undefined\DeclareMathSymbol{\Xi}{\mathalpha}{letters}{"04}
\let\Pi\undefined\DeclareMathSymbol{\Pi}{\mathalpha}{letters}{"05}
\let\Gamma\undefined\DeclareMathSymbol{\Gamma}{\mathalpha}{letters}{"00}
\let\Omega\undefined\DeclareMathSymbol{\Omega}{\mathalpha}{letters}{"0A}
\let\Lambda\undefined\DeclareMathSymbol{\Lambda}{\mathalpha}{letters}{"03}
\newcommand{\spaceplease}{\needspace{5\baselineskip}}
\newcommand{\extremespace}{\needspace{14\baselineskip}}
\let\to=\longrightarrow
\let\mapsto=\longmapsto
\newcommand{\VecBun}{\catf{VecBun}}
\newcommand{\Grpd}{\catf{Grpd}}
\newcommand{\sSet}{\catf{sSet}}
\newcommand{\colim}{\operatorname{colim}}
\begin{document}\thispagestyle{empty}
\setlength{\parskip}{0pt}
\setlength{\abovedisplayskip}{7pt}
\setlength{\belowdisplayskip}{7pt}
\setlength{\abovedisplayshortskip}{0pt}
\setlength{\belowdisplayshortskip}{0pt}

\begin{flushright}
\textsf{ZMP-HH/17-14}\\
\textsf{Hamburger Beiträge zur Mathematik Nr. 656}\\
\textsf{May 2017}
\end{flushright}

\vspace*{1cm}

\begin{center}
\Large \textbf{Orbifold Construction for Topological Field Theories} \normalsize \\[4ex] Christoph Schweigert and Lukas Woike
\end{center}

\begin{center}
\emph{Fachbereich Mathematik,   Universität Hamburg}\\
\emph{Bereich Algebra und Zahlentheorie}\\
\emph{Bundesstra{\ss}e 55,   D -- 20 146  Hamburg}
\end{center}

\begin{abstract}
\noindent \textbf{Abstract.} 
An equivariant topological field theory is defined on a cobordism category
of mani\-folds with principal fiber bundles for a fixed (finite) structure group.
We provide a geometric construction which for any given morphism $G \to H$ of finite groups assigns in a functorial way to
a $G$-equivariant topological field theory an $H$-equivariant topological field theory, the pushforward theory.
When $H$ is the trivial group, this yields an orbifold construction for $G$-equivariant topological field theories
which unifies and generalizes several known algebraic notions of
orbifoldization. 
\end{abstract}

\vspace*{1cm}
\tableofcontents

\spaceplease\section{Introduction and summary}
In several contexts, the following strategy has proven to be successful to construct
new mathematical objects from a given one: 
For a fixed (and for our purposes always finite) group $G$ find an embedding of the given object into a $G$-equivariant one (in the terminology of physics, this is is often referred to as the \emph{addition of twisted sectors}). 
 Then taking invariants in the appropriate sense yields a new object, the \emph{orbifold object}. In \cite[p.~495]{dvvvorbifold}, this has been phrased as follows:
``First the idea of an orbifold clearly implies that we keep only
the $G$-invariant states in the original Hilbert space $\mathscr{H}_0$.
However, [...] we also have to include twisted sectors.''

Examples include the construction of Frobenius algebras from $G$-equivariant
Frobenius algebras, see \cite{kaufmannorb}, and the construction of modular tensor categories from
$G$-equivariant modular tensor categories, see \cite{kirrolovg04}. 
A prominent special case of the latter is the orbifoldization of the trivial modular
tensor category, i.e.\ of the category of finite-dimensional vector spaces. One way of adding twisted sectors
yields a category of finite-dimensional $G$-graded vector spaces; taking invariants gives the representation category of the Drinfeld
double of $G$, see \cite[Example~5.2]{kirrolovg04}, which is an algebraic object of independent interest.

The study of these algebraic structures can be rather involved. However,
they are often accessible from a different point of view in the sense that they define a 
topological field theory in a certain dimension. 
Since topological field theories are sometimes easier to manipulate than the mere algebraic objects, this perspective can lead to to conceptual insights, unifications and generalizations of existing algebraic notions or simplifications of proofs. We will see some examples in this article. 

A topological field theory with values in vector spaces is a symmetric monoidal functor
\begin{align} \Cob(n) \to \Vect_K \end{align} from (some version of) the $n$-dimensional bordism category to the category of vector spaces over some field $K$ or, more generally, to any symmetric monoidal category. Results exhibiting the strong relation between topological field theories and algebraic objects include the classification of two-dimensional topological field theories by commutative Frobenius algebras, see \cite{kock}, and the classification of extended three-dimensional topological field theories by modular tensor categories, see \cite{BDSPV153D}.

In fact, there is the following natural generalization: For any finite group
$G$, we can consider a $G$-equivariant version $G\text{-}\Cob(n)$ of the bordism category, in which the bordisms are equipped with principal $G$-bundles.
A $G$-equivariant topological field theory is a symmetric monoidal functor $Z: G\text{-}\Cob(n) \to \Vect_K$ satisfying a homotopy invariance property, see \cite{turaevhqft} and also Section~\ref{mscchaphqfts} of the present article.
This gives a family of different types of equivariant topological field theories, one
for each finite group. It is a natural question whether a group homomorphism
$G\to H$ gives rise to a functor mapping $G$-equivariant to $H$-equivariant theories.
The present article answers this question affirmatively by providing a pushforward operation along a group morphism.

We work in the framework of monoidal
categories and non-extended topological field theories. However, the essential ideas should not depend on orientability and generalize to extended topological field theories. For our main result, no assumptions on the dimension are necessary.

We formulate in Section~\ref{secorbifoldconstruction} the pushforward operation along a morphism $\lambda : G \to H$ of finite groups by a two-step procedure:
\begin{xenumerate}
\item In the first step we produce from a $G$-equivariant topological field theory $Z$ an $H$-equivariant topological field theory $\widehat{Z}^\lambda : H\text{-}\Cob \to  \RepGrpd{K}$ with values in a symmetric monoidal category $\RepGrpd{K}$ defined in \cite{haugseng,trova} using flat vector bundles over essentially finite groupoids (alternatively, representations of essentially finite groupoids) and spans of groupoids. We refer to this step, which is described in Section~\ref{secchangetoequivcoeff}, as \emph{change to equivariant coefficients}. 
It relies crucially on the intertwining property following from the homotopy invariance axiom of homotopy quantum field theories (Proposition~\ref{satzfreehtpmaponbordmmsc}). 
As a byproduct, the change to equivariant coefficients allows us to make the relation between equivariant topological field theories in the sense of \cite{turaevhqft} and classical field theories with $G$-bundle background in the sense of \cite[3.]{fhlt} explicit, see Remark~\ref{bmkfhltcl}. 

\item To produce a theory with values in vector spaces, one wishes to take invariants,
see \cite[8.]{fhlt}. The corresponding symmetric monoidal functor $\Par : \RepGrpd{K} \to \Vect_K$ for a field $K$ of characteristic zero
has been provided in \cite{trova}.
This functor takes parallel sections of vector bundles over groupoids and will hence be referred to as \emph{parallel section functor}, see Section~\ref{secparsec} for a brief review.  

\end{xenumerate}
The pushforward operation along $\lambda : G \to H$ is the concatenation
 \begin{align}
\lambda_* : \HSym(G\text{-}\Cob(n),\Vect_K) \xrightarrow{\widehat{?}^\lambda} \HSym(H\text{-}\Cob(n) , \RepGrpd{K}) \xrightarrow{\Par_*} \HSym(H\text{-}\Cob(n), \Vect_K) .
\end{align}It is compatible with composition of group morphisms (Theorem~\ref{thmgluinglawpush}). 

The orbifoldization of $G$-equivariant topological field theories 
\begin{align}
\frac{?}{G}: \HSym(G\text{-}\Cob(n),\Vect_K) \to \Sym ( \Cob(n) , \Vect_K )
\end{align}
is defined as the pushforward along the morphism $G \to 1$ to the trivial group. 
Contrary to a possible guess, this functor cannot be obtained as an adjoint of the pullback along the forgetful functor $G\text{-}\Cob(n) \to \Cob(n)$ as explained in Example~\ref{extwisteddworbmsc}. Moreover, we show in Corollary~\ref{koresssurj} that the orbifold construction is essentially surjective. 

We can provide concrete formulae for the orbifold construction:
To a closed oriented $n$-dimensional manifold the orbifold theory $Z/G$ of a $G$-equivariant topological field theory $Z$ assigns the invariant
\begin{align}
\frac{Z}{G}(M) = \int_{\PBun_G(M)} Z(M,P)\,\td P, 
\end{align} where $Z(M,?)$ is an invariant function on the (finite) groupoid of $G$-bundles over $M$ that can be integrated with respect to groupoid cardinality (Corollary~\ref{korofkexplicit}, \ref{korofkexplicitc}). This implements an integral over all twisted sectors. Hence, at the level of partition functions orbifoldization is integration with respect to groupoid cardinality.

We then show that our orbifold construction encompasses the following instances of orbifoldization: 

\begin{itemize}\itemsep0pt

\item For two-dimensional $G$-equivariant theories a classification by $G$-crossed Frobenius algebras due to \cite{turaevhqft} is available, which can be substantially simplified using the change to equivariant coefficients (Example~\ref{exGcrossedproof}). We describe in Section~\ref{secmscofk2d} the orbifold construction in dimension two on the level of Frobenius objects by using the notion of an orbifold Frobenius algebra in the sense of \cite{kaufmannorb}.

\item The orbifold theory of the primitive homotopy quantum field theory twisted by a cocycle (in the sense of \cite{turaevhqft},~I.2.1) is the Dijkgraaf-Witten theory twisted by this cocycle (Example~\ref{extwisteddworbmsc}). The corresponding extended topological field theory provides the twisted Drinfeld double, see \cite{drp90}.

\item The orbifold theory of the $J$-equivariant Dijkgraaf-Witten theory $Z_\lambda$ constructed in \cite{maiernikolausschweigerteq} from a short exact sequence 
$0 \to G \to H \stackrel{\lambda}{\to} J \to 0$ of finite groups is isomorphic to the Dijkgraaf-Witten theory $Z_H$ for the group $H$ (Example~\ref{orbifolddwmodels}). Considered at the level of invariants this result is a particular incarnation of Cavalieri's principle for groupoids. 

\end{itemize}

The present article is also the basis for the orbifold construction for extended topological field theories which relates much richer algebraic structures.  
In particular, in the 3-2-1-dimensional case the orbifold construction should yield a geometric understanding of the algebraic orbifold construction in \cite{kirrolovg04} and hence allow for a study of equivariant modular categories and their orbifoldization in purely geometric terms. Conversely, we will be able to use the intimate algebraic relation of an equivariant modular category and its orbifoldization to understand extended equivariant topological field theories.

\section*{Acknowledgements}
We are grateful to Anssi Lahtinen, Lukas M\" uller and Alexis Virelizier for helpful discussions. 
In particular, we thank Alexis Virelizier for suggesting to us a generalization of the orbifold construction to a push operation along a group morphism and an anonymous referee for pointing out reference
\cite{trova}. These two suggestions have allowed to us to write this article in a 
much more concise and conceptual form.

CS is partially supported by the Collaborative Research Centre 676 ``Particles,
Strings and the Early Universe -- the Structure of Matter and Space-Time"
and by the RTG 1670 ``Mathematics inspired by String theory and Quantum
Field Theory''.
LW is supported by the RTG 1670 ``Mathematics inspired by String theory and Quantum
Field Theory''.

 \extremespace
\section{Homotopy quantum field theories and equivariant topological field theories\label{mscchaphqfts}}
In this introductory section we recall the definition of topological field theories with arbitrary target following \cite{turaevhqft} and explain how they give rise to representations of mapping groupoids.

\subsection{Topological field theories with arbitrary target space}
Topological field theories are symmetric monoidal functors on the bordism category $\Cob(n)$, see \cite{kock}. Replacing bordisms by bordisms together with a map into some fixed topological space $T$, yields a natural generalization of the bordism category, see \cite{turaevhqft}:

\begin{definition}[Bordism category for arbitrary target space]\label{defmscbordcattarget}
Let $n\ge 1$. For a non-empty topological space $T$ the \emph{category $T\text{-}\Cob(n)$ of $n$-dimensional bordisms carrying maps with target space $T$} is defined in the following way:
\begin{xenumerate}
\item Objects are pairs $(\Sigma,\varphi)$, where $\Sigma$ is an $n-1$-dimensional oriented closed manifold (hence an object in $\Cob(n)$) and $\varphi: \Sigma \to T$ a continuous map. By \emph{manifold} we always mean smooth finite-dimensional manifold. A continuous map will often just be referred to as map.
\item A morphism $(M,\psi) : (\Sigma_0,\varphi_0) \to (\Sigma_1,\varphi_1)$ is an equivalence class of pairs of oriented compact bordisms $M:\Sigma_0 \to \Sigma_1$ and continuous maps $\psi : M \to T$ such that the diagram of continuous maps
\begin{center}
\begin{tikzpicture}[scale=1]
\node (A1) at (0,0) {$\Sigma_0$};
\node (A2) at (2,1) {$M$};
\node (A3) at (4,0) {$\Sigma_1$};
\node (B2) at (2,-1) {$T$};
\node (B1) at (1,-0.5) {$$};
\node (B3) at (2,-1) {$$};
\path[<-,font=\scriptsize]
(A2) edge node[above]{$$} (A1)
(A2) edge node[above]{$$} (A3)
(B2) edge node[below]{$\varphi_0$} (A1)
(B2) edge node[below]{$\varphi_1$} (A3)
(B2) edge node[right]{$\psi$} (A2);
\end{tikzpicture}
\end{center} commutes. The unlabeled arrows are the embeddings of the boundary components into $M$. Two such pairs $(M,\psi)$ and $(M',\psi')$ are defined to be equivalent if there is an orientation-preserving diffeomorphism $\Phi : M \to M'$ making the diagram
\begin{center}
\begin{tikzpicture}[scale=1]
\node (A1) at (0,0) {$\Sigma_0$};
\node (A2) at (2,1) {$M$};
\node (A3) at (4,0) {$\Sigma_1$};
\node (B2) at (2,-1) {$M'$};
\node (B1) at (1,-0.5) {$$};
\node (B3) at (2,-1) {$$};
\path[<-,font=\scriptsize]
(A2) edge node[above]{$$} (A1)
(A2) edge node[above]{$$} (A3)
(B2) edge node[below]{$$} (A1)
(B2) edge node[below]{$$} (A3)
(B2) edge node[right]{$\Phi$} (A2);
\end{tikzpicture}
\end{center} commute such that additionally $\psi = \psi' \circ \Phi$.

\end{xenumerate}
The identity of $(\Sigma,\varphi)$ is represented by the cylinder over $\Sigma$ carrying the trivial homotopy $\varphi \simeq \varphi$. Composition is by gluing of bordisms and maps, respectively. Just like $\Cob(n)$, the category $T\text{-}\Cob(n)$ carries the structure of a symmetric monoidal category with duals. 
\end{definition}

\begin{definition}[Homotopy quantum field theory]\label{defmschqft}
An \emph{$n$-dimensional homotopy quantum field theory over a field $K$ with target space $T$} is a symmetric monoidal functor
\begin{align} Z : T\text{-}\Cob(n) \to \Vect_K\end{align} 
which is additionally \emph{homotopy invariant}, i.e.\ $Z(M,\psi_0) = Z(M,\psi_1)$ for morphisms $(M,\psi_0),(M,\psi_1)$ in $T\text{-}\Cob(n)$ with $\psi_0\simeq \psi_1$ relative $\partial M$.
For a group $G$ we define a \emph{$G$-equivariant topological field theory} as a homotopy quantum field theory with the classifying space $BG$ of $G$ as a target (hence we have an aspherical target). We set $G\text{-}\Cob(n) := BG\text{-}\Cob(n)$, so a $G$-equivariant topological theory is a symmetric monoidal functor
$Z : G\text{-}\Cob(n) \to \Vect_K$ fulfilling the homotopy invariance property. 
\end{definition}

\begin{remarks}\label{bmkgtftsmsc}

\item If $T$ is the one-point-space, then $T\text{-}\Cob(n)\cong \Cob(n)$ and we obtain an ordinary topological field theory.

\item Here and in the sequel the manifolds involved in the definition of $\Cob(n)$ or $T\text{-}\Cob(n)$ are always oriented. Therefore, one could specify all theories as \emph{oriented}. Since we only consider this case, we will drop this additional adjective.

\item If $M$ is a closed oriented $n$-dimensional manifold and $\varphi : M \to T$ a continuous map, then an $n$-dimensional topological field theory $Z : T\text{-}\Cob(n) \to \Vect_K$ assigns to $(M,\varphi)$ a number in $K$. This number is a diffeomorphism invariant of $M$. If $Z$ is a homotopy quantum field theory, it is also an invariant of the homotopy class of $\varphi$.

\item For any manifold $\Sigma$ the mapping groupoid $\Pi(\Sigma,BG)$ is canonically equivalent to the groupoid $\PBun_G(\Sigma)$ of principle fiber bundles over $\Sigma$ with structure group $G$, see e.g.\ \cite[1]{heinlothstacks}. Hence, we may view a $G$-equivariant topological field theory as being defined on a bordism category decorated with $G$-bundles.\label{bmkGequivtft2}

\end{remarks}

\begin{definition}\label{defcatofhqft}
For any target space $T$ we define the \emph{category $\HSym(T\text{-}\Cob(n),\Vect_K)$ of $n$-dimensional homotopy quantum field theories with target $T$} to be the full subcategory of $\Sym(T\text{-}\Cob(n),\Vect_K)$ of symmetric monoidal functors $T\text{-}\Cob(n)\to \Vect_K$ consisting of those functors having the homotopy invariance property from Definition~\ref{defmschqft}. 
\end{definition}

\noindent Since $T\text{-}\Cob(n)$ has duals and since monoidal natural transformations between symmetric monoidal functors on categories with duality are always isomorphisms we obtain the following:

\begin{corollary}[]\label{kormsccatTtfts}
For any target space $T$ the category $\HSym(T\text{-}\Cob(n),\Vect_K)$ of homotopy quantum field theories with target $T$ is a groupoid.
\end{corollary}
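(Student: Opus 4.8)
The plan is to prove that $\HSym(T\text{-}\Cob(n),\Vect_K)$ is a groupoid by showing that every morphism in it---that is, every monoidal natural transformation between homotopy quantum field theories---is invertible. The key structural input, explicitly flagged in the text preceding the statement, is that $T\text{-}\Cob(n)$ is a symmetric monoidal category \emph{with duals}: each object $(\Sigma,\varphi)$ has a dual, realized geometrically by the orientation-reversed object together with evaluation and coevaluation bordisms (the appropriately decorated bent cylinders) satisfying the zigzag identities. I would isolate the general categorical fact that a monoidal natural transformation between (symmetric) monoidal functors out of a category with duals is automatically a natural \emph{isomorphism}, and then apply it directly.

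First I would set up the general lemma. Let $Z, Z' : \cat{C} \to \Vect_K$ be symmetric monoidal functors on a rigid symmetric monoidal category $\cat{C}$, and let $\eta : Z \Rightarrow Z'$ be a monoidal natural transformation. For each object $X$ with dual $X^\vee$, I would construct a candidate inverse for the component $\eta_X : Z(X) \to Z'(X)$ out of the component $\eta_{X^\vee}$ at the dual, using the duality data. Concretely, since $\eta$ is monoidal, it is compatible with the images of the evaluation and coevaluation morphisms under $Z$ and $Z'$; the standard construction is to form the morphism
\begin{align}
\overline{\eta}_X : Z'(X) \xrightarrow{\ \cong\ } \bigl(Z'(X^\vee)\bigr)^\vee \xrightarrow{\ (\eta_{X^\vee})^\vee\ } \bigl(Z(X^\vee)\bigr)^\vee \xrightarrow{\ \cong\ } Z(X),
\end{align}
where the outer isomorphisms come from the dualities induced on $Z(X)$ and $Z'(X)$ by the images of the (co)evaluation maps (here one uses that $Z$, $Z'$ are monoidal, so they send duals in $\cat{C}$ to duals in $\Vect_K$). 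One then checks via the zigzag identities in $\cat{C}$---transported through $Z$ and $Z'$ and combined with the monoidality and naturality of $\eta$---that $\overline{\eta}_X$ is a two-sided inverse to $\eta_X$.

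I expect the main obstacle to be bookkeeping rather than conceptual: one must verify carefully that the two triangle (zigzag) identities, after applying the monoidal functors, yield exactly the cancellation $\overline{\eta}_X \circ \eta_X = \id_{Z(X)}$ and $\eta_X \circ \overline{\eta}_X = \id_{Z'(X)}$. The verification is a diagram chase that uses three ingredients in concert: the defining compatibility of $\eta$ with the monoidal structure (so that $\eta$ interacts correctly with tensor products and the unit), the naturality of $\eta$ applied to the (co)evaluation bordisms, and the duality relations in the target $\Vect_K$. A subtlety worth a remark is that in $\Vect_K$ not every object is dualizable, but the \emph{images} of dualizable objects of $\cat{C}$ under a monoidal functor are automatically dualizable, so the construction of $\overline{\eta}_X$ is well defined precisely on the objects arising as $Z(X)$.

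Having established the lemma in this generality, I would conclude simply: the statement follows by specializing $\cat{C} = T\text{-}\Cob(n)$, which has duals as recorded in Definition~\ref{defmscbordcattarget}, and observing that $\HSym(T\text{-}\Cob(n),\Vect_K)$ is by Definition~\ref{defcatofhqft} a full subcategory of $\Sym(T\text{-}\Cob(n),\Vect_K)$ whose morphisms are exactly the monoidal natural transformations between such functors. Since every such morphism is an isomorphism by the lemma, the category is a groupoid. I would note that the homotopy invariance restriction plays no role in the argument---it only cuts down the objects, not the morphisms---so the conclusion holds for the full functor category as well, and the corollary is really a direct consequence of rigidity of the bordism category.
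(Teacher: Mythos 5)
Your proposal is correct and takes essentially the same route as the paper, which derives the corollary in one line from the standard fact that monoidal natural transformations between symmetric monoidal functors on a category with duals are always isomorphisms; you simply supply the usual proof of that fact via the component at the dual object and the zigzag identities. Your closing observation that homotopy invariance plays no role is likewise consistent with the paper's argument.
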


\begin{remark}
	\label{pointedunpoited}
In \cite{turaevhqft} a pointed version $T_*\text{-}\Cob(n)$ of the category $T\text{-}\Cob(n)$ is used for a fixed basepoint $t_0 \in T$: Objects are pairs of a closed oriented $n-1$-dimensional pointed manifold $\Sigma$ (\emph{pointed} means that all components are equipped with a basepoint) and a pointed map $\varphi : \Sigma \to T$. The morphisms, their composition and the monoidal structure are defined just as in $T\text{-}\Cob(n)$, i.e.\ the forgetful functor
$
U: T_*\text{-}\Cob(n) \to T\text{-}\Cob(n)
$ is a symmetric monoidal equivalence for path-connected $T$ as can be easily seen. 
This shows that the category of pointed homotopy quantum field theories with target space $T$ in the sense of \cite{turaevhqft} is equivalent to the category of homotopy quantum field theories with target space $T$ used in this article if $T$ is path-connected. 

\end{remark}

\subsection{Groupoid representations from evaluation on the cylinder}
\noindent By evaluation on the cylinder homotopy quantum field theories produce representations of mapping groupoids on vector spaces.

\begin{definition}
For topological spaces $X$ and $Y$ we denote by $\Pi(X,Y)$ the \emph{mapping groupoid of $X$ and $Y$} which has continuous maps $X \to Y$ as objects and equivalence classes of homotopies of such maps as morphisms. Here, we consider homotopies $h, h' : X\times [0,1] \to Y$ between $f$ and $g$ to be equivalent if they are homotopic relative the boundary $X\times \{0,1\}$ of the cylinder $X\times [0,1]$. The inverse of a homotopy $h$ will be denoted by $h^-$.
\end{definition}

\noindent The axiom of homotopy invariance ensures that we obtain representations of mapping groupoids and leads to an intertwining property which is absolutely crucial for the change to equivariant coefficients in Section~\ref{secchangetoequivcoeff}:

\begin{proposition}[]\label{satzmscreprpigrpd}
A homotopy quantum field theory $Z: T\text{-}\Cob(n) \to \Vect_K$ provides for each object $\Sigma$ in $\Cob(n)$ a representation 
\begin{align}
\varrho_\Sigma : \Pi(\Sigma,T) &\to \Vect_K \\ (\varphi:\Sigma \to T)& \mapsto Z(\Sigma,\varphi),\\ (\varphi \stackrel{h}{\simeq} \psi) & \mapsto Z(\Sigma\times [0,1] , h). 
\end{align} of the mapping groupoid $\Pi(\Sigma,T)$. 
\end{proposition}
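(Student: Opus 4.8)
The plan is to verify that the stated assignment is a well-defined functor $\Pi(\Sigma,T)\to\Vect_K$; since $\Pi(\Sigma,T)$ is a groupoid, functoriality automatically forces the images of morphisms to be invertible, so such a functor is exactly what is meant by a representation on vector spaces. Thus the entire content is to confirm well-definedness on morphisms and the two functor axioms, with the homotopy invariance axiom of Definition~\ref{defmschqft} doing the essential work.

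First I would check that the assignment on morphisms is well defined. A morphism in $\Pi(\Sigma,T)$ is an equivalence class of homotopies $h:\Sigma\times[0,1]\to T$, two homotopies being identified when they are homotopic relative $\Sigma\times\{0,1\}$. Viewing $\Sigma\times[0,1]$ as the cylinder bordism $\Sigma\to\Sigma$, the pair $(\Sigma\times[0,1],h)$ is a morphism $(\Sigma,\varphi)\to(\Sigma,\psi)$ in $T\text{-}\Cob(n)$, where $\varphi$ and $\psi$ are the restrictions of $h$ to the two boundary components; hence $Z(\Sigma\times[0,1],h)$ is a linear map $Z(\Sigma,\varphi)\to Z(\Sigma,\psi)$ with precisely the source and target demanded by $\varrho_\Sigma$. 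Since $\partial(\Sigma\times[0,1])=\Sigma\times\{0,1\}$, the notion of homotopy rel boundary defining morphisms in $\Pi(\Sigma,T)$ coincides with the one in the homotopy invariance axiom, so that axiom guarantees $Z(\Sigma\times[0,1],h)$ depends only on the class of $h$.

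Next I would verify the functor axioms. For identities, the identity of $\varphi$ in $\Pi(\Sigma,T)$ is the constant homotopy, and by Definition~\ref{defmscbordcattarget} the identity of $(\Sigma,\varphi)$ in $T\text{-}\Cob(n)$ is represented by exactly the cylinder carrying this constant homotopy; functoriality of $Z$ sends it to $\id_{Z(\Sigma,\varphi)}$. For composition, the composite of the classes of $h:\varphi\simeq\psi$ and $h':\psi\simeq\chi$ is represented by the concatenated homotopy, while on the other side gluing the two cylinder bordisms along $\Sigma$ produces a bordism carrying the glued map; an orientation-preserving reparametrizing diffeomorphism identifies this with $\Sigma\times[0,1]$ carrying a representative of the concatenation, so functoriality of $Z$ under gluing yields $Z(\Sigma\times[0,1],h')\circ Z(\Sigma\times[0,1],h)$ for the composite.

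The step requiring the most care is composition, because concatenation of homotopies is only defined up to reparametrization, so the map obtained by naive gluing of cylinders need not literally be the chosen representative of the composite. This is resolved by a second appeal to homotopy invariance: any two reparametrizations of the concatenated homotopy agree rel $\Sigma\times\{0,1\}$ and therefore have equal image under $Z$; and the equivalence relation on bordisms-with-maps built into Definition~\ref{defmscbordcattarget} absorbs the orientation-preserving diffeomorphism identifying the glued cylinder with $\Sigma\times[0,1]$. Everything remaining is a direct transcription of the symmetric monoidal functoriality of $Z$.
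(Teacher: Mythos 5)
Your argument is correct. The paper in fact states this proposition without proof, treating it as an immediate consequence of the homotopy invariance axiom, and your verification (well-definedness on homotopy classes via homotopy invariance rel $\Sigma\times\{0,1\}=\partial(\Sigma\times[0,1])$, identities via the cylinder with trivial homotopy, and composition via gluing of cylinders with the reparametrization ambiguity absorbed by the diffeomorphism equivalence in $T\text{-}\Cob(n)$ together with a second use of homotopy invariance) is exactly the routine check the authors leave to the reader.
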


\spaceplease
\begin{proposition}[Intertwining property]\label{satzfreehtpmaponbordmmsc}
Let $Z: T\text{-}\Cob(n) \to \Vect_K$ be a homotopy quantum field theory with target $T$. Given two morphisms $(M,\psi)$ and $(M,\xi)$ in $T\text{-}\Cob(n)$ and a homotopy $\psi \stackrel{h}{\simeq} \xi : M \to T$, the square
\begin{center}
\begin{tikzpicture}[scale=2]
\node (A1) at (0,1) {$Z(\Sigma_0,\psi|_{\Sigma_0})$};
\node (A2) at (4,1) {$Z(\Sigma_0,\xi|_{\Sigma_0})$};
\node (B1) at (0,0) {$Z(\Sigma_1,\psi|_{\Sigma_1})$};
\node (B2) at (4,0) {$Z(\Sigma_1,\xi|_{\Sigma_1})$};
\path[->,font=\scriptsize]
(A1) edge node[above]{$Z(\Sigma_0\times [0,1],h|_{\Sigma_0})=\varrho_{\Sigma_0}(h|_{\Sigma_0})$} (A2)
(A1) edge node[left]{$Z(M,\psi)$} (B1)
(B1) edge node[above]{$Z(\Sigma_1\times [0,1],h|_{\Sigma_1}) =\varrho_{\Sigma_1}(h|_{\Sigma_1})$} (B2)
(A2) edge node[right]{$Z(M,\xi)$} (B2);
\end{tikzpicture}
\end{center} commutes. 
\end{proposition}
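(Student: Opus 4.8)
The plan is to prove commutativity of the square by realizing all four morphisms as values of the functor $Z$ on suitable morphisms in $T\text{-}\Cob(n)$, and then to exhibit a single bordism-with-map whose two factorizations give the two composites around the square. Concretely, I would consider the cylinder $M \times [0,1]$ equipped with the homotopy $h : M \times [0,1] \to T$ from $\psi$ to $\xi$. The key observation is that this object $(M\times[0,1], h)$, viewed appropriately, can be decomposed in two different ways as a composite of bordisms in $T\text{-}\Cob(n)$, and applying the symmetric monoidal functor $Z$ to these two decompositions yields exactly the two paths around the square.

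First I would set up the two factorizations at the geometric level. Going along the top-right of the square corresponds to first applying the cylinder $(\Sigma_0 \times [0,1], h|_{\Sigma_0})$ and then the bordism $(M,\xi)$; going along the left-bottom corresponds to first applying $(M,\psi)$ and then the cylinder $(\Sigma_1 \times [0,1], h|_{\Sigma_1})$. I would argue that both of these composite bordisms-with-maps are equivalent, as morphisms in $T\text{-}\Cob(n)$, to one and the same morphism, namely a bordism diffeomorphic to $M$ (after absorbing the collar cylinders) carrying a map homotopic relative boundary to a common representative built from $h$. The two gluings differ only by where the cylindrical collar is attached, so the underlying bordisms are related by an orientation-preserving diffeomorphism compatible with the maps up to homotopy rel boundary.

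The decisive step then invokes homotopy invariance. After identifying the underlying bordism of both composites with $M$, the two maps into $T$ one obtains agree up to a homotopy relative to $\partial M$, because both are reparametrizations of $h$ differing only by how the collar is traversed. By the homotopy invariance axiom of Definition~\ref{defmschqft}, $Z$ assigns the same linear map to homotopic (rel boundary) maps, so the two composites are sent to the same morphism in $\Vect_K$. Functoriality of $Z$ turns the geometric gluing identities into the equalities of composites, and the identifications $\varrho_{\Sigma_i}(h|_{\Sigma_i}) = Z(\Sigma_i \times [0,1], h|_{\Sigma_i})$ are exactly the definition of the mapping-groupoid representation from Proposition~\ref{satzmscreprpigrpd}.

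I expect the main obstacle to be the careful bookkeeping of collars and parametrizations: showing precisely that the two ways of attaching the homotopy cylinder to $M$ produce maps into $T$ that are genuinely homotopic \emph{relative to} $\partial M$, rather than merely freely homotopic, since only the rel-boundary version is covered by the homotopy invariance axiom. This amounts to checking that the restriction of $h$ to the boundary is already accounted for by the collar cylinders, so that the remaining discrepancy in the interior can be straightened out keeping the boundary fixed. Once this homotopy is constructed explicitly, the rest is a routine application of functoriality and of the definitions recalled above.
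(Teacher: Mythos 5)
Your proposal is correct and follows essentially the same route as the paper: both arguments absorb collar cylinders into $M$, write down an explicit reparametrization of $h$ over those collars to produce a homotopy relative to $\partial M$, and then conclude by homotopy invariance and functoriality of $Z$. The only immaterial difference is that the paper homotopes $\psi$ to the triple gluing $h|_{\Sigma_1}^{-}\cup\xi\cup h|_{\Sigma_0}$ and then cancels $\varrho_{\Sigma_1}(h|_{\Sigma_1})$ using its invertibility from Proposition~\ref{satzmscreprpigrpd}, whereas you compare the two two-fold composites directly; both reduce to exactly the collar bookkeeping you correctly identify as the crux.
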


\begin{proof}
We construct a homotopy $H: M\times [0,1] \to T$ relative $\partial M$ starting at $\psi$. In order to define the maps $H_t : M \to T$, we choose $(\Sigma_1 \times [0,1]) \cup_{\Sigma_1} M \cup_{\Sigma_1} (\Sigma_0 \times [0,1])$ as a representative for the bordism class $M$, where $\cup_?$ denotes the gluing of manifolds (we are gluing in two cylinders). Now let $H_t$ be the map $(\Sigma_1 \times [0,1]) \cup_{\Sigma_1} M \cup_{\Sigma_0} (\Sigma_0 \times [0,1])\to T$ obtained by gluing together the maps
\begin{align}
\alpha_t : \Sigma_0 \times [0,1] &\to T, \quad (x,s) \mapsto h(x,st),\\
h_t : M  &\to T, \quad (x,s) \mapsto h(x,t),\\
\beta_t : \Sigma_1 \times [0,1] &\to T, \quad (x,s) \mapsto h(x,(1-s)t).
\end{align} Hence, $H$ is a homotopy relative boundary from $\psi$ to the map \begin{align} M\cong (\Sigma_1 \times [0,1]) \cup_{\Sigma_1} M \cup_{\Sigma_0} (\Sigma_0 \times [0,1]) \to T\end{align} obtained by gluing $h|_{\Sigma_1}^{-}$, $\xi$ and $h|_{\Sigma_0}$. Using homotopy invariance and functoriality, we obtain
\begin{align}
Z(M,\psi) = Z(\Sigma_1\times [0,1],h|_{\Sigma_1}^{-}) \circ Z(M,\xi) \circ Z(\Sigma_0\times [0,1],h|_{\Sigma_0}).
\end{align} From Proposition~\ref{satzmscreprpigrpd} we deduce that $Z(\Sigma_1\times [0,1],h|_{\Sigma_1}^{-})=Z(\Sigma_1\times [0,1],h|_{\Sigma_1})^{-1}$.   
\end{proof}

\extremespace
\section{Change to equivariant coefficients and the parallel section functors}
We will now discuss the ingredients needed for the definition of the pushforward construction.

\subsection{Parallel sections of vector bundles over a groupoid\label{limitscolimitsgrpdreprmsc}}
Let $\Gamma$ be a small groupoid and $\varrho: \Gamma \to \Vect_K$ a representation. Then its limit and colimit can be expressed by
\begin{align}
\lim \varrho \cong \prod _{[x]\in \pi_0(\Gamma)}\varrho(x)^{\Aut(x)}, \quad 
\colim \varrho \cong \bigoplus _{[x]\in \pi_0(\Gamma)} \varrho(x)_{\Aut(x)}, \label{eqncolimlim}
\end{align} i.e.\ by invariants and coinvariants, respectively. There is a canonical map $\lim \varrho \to \colim \varrho$. If $\Gamma$ is essentially finite (Definition~\ref{defessentiallyfinitegroupoid}) and $K$ of characteristic zero, this map is an isomorphism.

The formulae in \eqref{eqncolimlim} are very explicit, but not too useful since they use chosen representatives of the isomorphism classes. 
We will now discuss a very convenient realization of the limit of a representation by seeing a groupoid representation as a vector bundle over a groupoid (see Proposition~\ref{satzholonomyprinciplemsc} below). All the notions occuring in the following definition are directly transferred from the ordinary theory of vector bundles with connection. They also appear in the context of groupoid representations in \cite{willterongerbesgrpds}.

\begin{definition}[Vector bundle over a groupoid]\label{defmscvectorbundleongroupoid}
Let $K$ be a field and $\Gamma$ a small groupoid. We will view a representation $\varrho : \Gamma \to \FinVect_K$ of $\Gamma$ on finite-dimensional $K$-vector spaces as a \emph{$K$-vector bundle $\varrho$ over $\Gamma$ (with flat connection)}. We call the vector space $\varrho(x)$ the \emph{fibre} over $x\in \Gamma$. We will not define directly what a connection on $\varrho$ is, but we will define its parallel transport: For a morphism $g : x \to y$ in $\Gamma$ we call the operator $\varrho(g):\varrho(x) \to \varrho(y)$ the \emph{parallel transport} of $\varrho$ along $g$. 
A morphism $\eta : \varrho \to \xi$ of vector bundles over $\Gamma$ is a natural transformation of the corresponding functors $\Gamma \to \Vect_K$ (this means that $\eta$ is an intertwiner for the parallel transport operators). The category of $K$-vector bundles over $\Gamma$ is denoted by $\VecBun_K(\Gamma)$. 
\end{definition}

The category $\VecBun_K(\Gamma)$ of $K$-vector bundles over some groupoid $\Gamma$ inherits from $\FinVect_K$ the structure of a symmetric monoidal category with duals. The monoidal unit $\mathbb{I}_\Gamma$ assigns to every $x\in \Gamma$ the vector space $K$ and to every morphism in $\Gamma$ the identity on $K$.

For a vector bundle over a groupoid there is the notion of a parallel section, sometimes also called \emph{invariant} or \emph{flat sections}, see \cite{willterongerbesgrpds}. The relation to parallel sections in the geometric sense is obvious.

\begin{definition}[Parallel section of a vector bundle over a groupoid]\label{defparallelsectionsmsc}
Let $\varrho$ be a $K$-vector bundle over a groupoid $\Gamma$. A \emph{parallel section} of $\varrho$ is a function $s$ on $\Gamma$ with $s(x) \in \varrho(x)$ for $x\in \Gamma$ such that for any morphism $g : x \to y$ the equation
$
s(y) = \varrho(g)s(x)
$ holds. By \begin{align} \Par \varrho := \{s : \Gamma \to K \, | \,  s \ \text{parallel section}\} \end{align} we denote the \emph{vector space of parallel sections of $\varrho$}. 
\end{definition}

\noindent The following result is the analogue of the well-known \emph{holonomy principle} from the theory of vector bundles with connection.

\begin{proposition}[Holonomy principle]\label{satzholonomyprinciplemsc}
Let $\varrho$ be a $K$-vector bundle over a small groupoid $\Gamma$. Then the vector space $\Par \varrho$ of parallel sections of $\varrho$ is the limit of $\varrho$. By the functoriality of the limit, taking parallel sections extends to a functor
$
\Par_\Gamma: \VecBun_K(\Gamma) \to \Vect_K.
$
\end{proposition}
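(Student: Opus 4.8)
The plan is to show directly that $\Par \varrho$, together with its evaluation maps, satisfies the universal property of the limit of the functor $\varrho : \Gamma \to \Vect_K$. First I would recall that a limit of $\varrho$ is a vector space $L$ equipped with a cone, i.e.\ a family of linear maps $\pi_x : L \to \varrho(x)$ (one for each object $x \in \Gamma$) that is compatible with the parallel transport in the sense that $\varrho(g) \circ \pi_x = \pi_y$ for every morphism $g : x \to y$, and which is terminal among all such cones. The candidate cone structure on $\Par \varrho$ is the obvious one: for each object $x$ the map $\operatorname{ev}_x : \Par \varrho \to \varrho(x)$, $s \mapsto s(x)$, sends a parallel section to its value at $x$.

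The first step is to verify that these evaluation maps do form a cone. This is immediate from Definition~\ref{defparallelsectionsmsc}: the defining condition $s(y) = \varrho(g) s(x)$ for a parallel section is exactly the statement $\operatorname{ev}_y = \varrho(g) \circ \operatorname{ev}_x$, so the cone compatibility holds tautologically. The second step is to establish the universal property. Given any vector space $V$ with a cone $(f_x : V \to \varrho(x))_{x}$ satisfying $\varrho(g) \circ f_x = f_y$ for all $g : x \to y$, I would define a map $u : V \to \Par \varrho$ by $(u(v))(x) := f_x(v)$. I must then check that $u(v)$ is genuinely a parallel section, which again follows directly from the cone condition on the $f_x$, and that $u$ is the unique linear map satisfying $\operatorname{ev}_x \circ u = f_x$ for all $x$. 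Uniqueness is forced because any such map is determined on each coordinate by $f_x$, and a parallel section is completely determined by its family of values.

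The potential main obstacle is set-theoretic rather than categorical: one must be sure that the limit exists in $\Vect_K$ for a possibly large indexing groupoid $\Gamma$, and that the naive definition of $\Par \varrho$ as a set of functions assembles into an honest vector space sitting inside $\prod_{x \in \Gamma} \varrho(x)$. Since $\Gamma$ is assumed small, the product over its objects is a well-defined vector space, and $\Par \varrho$ is precisely the subspace cut out by the linear equations $s(y) - \varrho(g) s(x) = 0$ ranging over all morphisms $g$; this is visibly a linear subspace, so no genuine difficulty arises here. In fact this presentation makes the whole statement an instance of the standard construction of limits in $\Vect_K$ as an equalizer of two maps between products, and one could alternatively just identify $\Par \varrho$ with that equalizer.

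Finally, for the functoriality claim I would invoke the uniqueness part of the universal property. A morphism $\eta : \varrho \to \xi$ in $\VecBun_K(\Gamma)$ is a natural transformation, hence composing the limit cone of $\varrho$ with the components $\eta_x$ yields a cone on $\xi$, which by the universal property of $\Par \xi$ factors through a unique linear map $\Par_\Gamma(\eta) : \Par \varrho \to \Par \xi$. Concretely this map is just postcomposition of sections, $s \mapsto (x \mapsto \eta_x(s(x)))$, and one checks it lands in $\Par \xi$ using naturality of $\eta$. Respect for identities and composition then follows formally from the uniqueness of factorizations, giving the desired functor $\Par_\Gamma : \VecBun_K(\Gamma) \to \Vect_K$.
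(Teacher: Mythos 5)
Your proof is correct. The paper itself offers no proof of this proposition --- it is stated as the groupoid analogue of the classical holonomy principle and treated as routine --- so there is nothing to compare against; your argument (evaluation maps form a cone, universal property verified by coordinatewise definition, $\Par\varrho$ realized as the equalizer subspace of $\prod_{x\in\Gamma}\varrho(x)$, functoriality from uniqueness of factorizations) is exactly the standard verification the authors are implicitly relying on, and your remark that smallness of $\Gamma$ is what makes the product legitimate correctly identifies the only point where a hypothesis of the statement is actually used.
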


Let $\lambda : \varrho \to \xi$ be a morphism of $K$-vector bundles over a small groupoid $\Gamma$. Then the image of $\lambda$ under $\Par_\Gamma$ will be denoted by  
\begin{align}
\lambda_* : \Par \varrho \to \Par \xi, \quad s \mapsto \lambda_*s,
\end{align} and is explicitly given by $(\lambda_* s)(x) := \lambda_x s(x)$ for $x\in \Gamma$.

Just like vector bundles over topological spaces, vector bundles over groupoids admit pullbacks satisfying some obvious properties:

\spaceplease
\begin{proposition}[Pullback of vector bundles over groupoids]\label{pullbackvbgrpdmsc}
Let $\Phi: \Gamma \to \Omega$ be a functor between small groupoids and $\varrho$ a $K$-vector bundle over $\Omega$. Then $\Phi^* \varrho:= \varrho \circ \Phi : \Gamma \to \Vect_K$ is a vector bundle over $\Gamma$. This provides a pullback functor
\begin{align}
\Phi^* : \VecBun_K(\Omega) \to \VecBun_K(\Gamma)
\end{align} 
\begin{myenumerate}
\item The pullback functors obey the composition law $(\Psi \circ \Phi)^* = \Phi^* \circ \Psi^*$, where $\Psi: \Omega \to \Lambda$ is another functor between small groupoids. \label{pullbackvbgrpdmsca}

\item Let $\Phi': \Gamma \to \Omega$ be another functor between small groupoids and $\eta : \Phi \Rightarrow \Phi'$ a natural isomorphism. Then $\eta$ induces an isomorphism $\varrho(\eta) : \Phi^* \varrho \to {\Phi'}^* \varrho$ consisting of the maps $(\varrho(\eta_x))_{x\in\Gamma}$. \label{pullbackvbgrpdmscb}

\item The functor $\Phi$ induces a linear map
\begin{align}
\Phi^* : \Par \varrho \to \Par \Phi^* \varrho, \quad s \mapsto \Phi^* s,
\end{align} where $(\Phi^* s)(x) = s (\Phi(x))$ for all $x\in \Gamma$. Such a map is called a pullback map. \label{pullbackvbgrpdmscc}

\item Just like the pullback functors the pullback maps obey the composition law $(\Psi \circ \Phi)^* = \Phi^* \circ \Psi^*$.\label{pullbackvbgrpdmscd}

\item The pullback maps are natural in the sense that they provide a natural transformation
\begin{align}
\Par_\Omega \to \Par_\Gamma \circ\  \Phi^*, 
\end{align} i.e.\ for any morphism $\lambda : \varrho \to \xi$ of vector bundles over $\Omega$ the square 
\begin{center}
\begin{tikzpicture}[scale=2]
\node (A1) at (0,1) {$\Par \varrho$};
\node (A2) at (1,1) {$\Par \Phi^* \varrho$};
\node (B1) at (0,0) {$\Par \xi$};
\node (B2) at (1,0) {$\Par \Phi^* \xi$};
\path[->,font=\scriptsize]
(A1) edge node[above]{$\Phi^*$} (A2)
(A1) edge node[left]{$\lambda_*$} (B1)
(B1) edge node[above]{$\Phi^*$} (B2)
(A2) edge node[right]{$(\Phi^* \lambda)_*$} (B2);
\end{tikzpicture}
\end{center}
commutes.\label{pullbackvbgrpdmsce}

\label{pullbackvbgrpdmscc}
\end{myenumerate}
\end{proposition}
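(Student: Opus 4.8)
The plan is to observe that every construction in the statement is produced by precomposition with $\Phi$, so that each item reduces to associativity of functor composition together with the functoriality of $\varrho$ and the naturality of the transformations involved; there is no deep idea, only careful bookkeeping. For the opening assertion, note that $\Phi^*\varrho=\varrho\circ\Phi$ is a composite of functors $\Gamma\to\Omega\to\FinVect_K$, hence a vector bundle over $\Gamma$ in the sense of Definition~\ref{defmscvectorbundleongroupoid}, with parallel transport $(\Phi^*\varrho)(g)=\varrho(\Phi(g))$. On a morphism $\eta:\varrho\to\xi$ of vector bundles over $\Omega$ I would define $\Phi^*\eta$ by whiskering, i.e.\ $(\Phi^*\eta)_x:=\eta_{\Phi(x)}$ for $x\in\Gamma$; this is again a natural transformation since $\eta$ is, and it manifestly preserves identities and composites, so $\Phi^*$ is a functor.

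Part~\ref{pullbackvbgrpdmsca} is then immediate on objects, since $(\Psi\circ\Phi)^*\varrho=\varrho\circ(\Psi\circ\Phi)=(\varrho\circ\Psi)\circ\Phi=\Phi^*(\Psi^*\varrho)$ by associativity of composition, and the same computation on components proves it on morphisms. For part~\ref{pullbackvbgrpdmscb}, the components $\varrho(\eta_x):\varrho(\Phi(x))\to\varrho(\Phi'(x))$ are isomorphisms because $\eta_x$ is; to see that they assemble into a morphism of vector bundles I would check, for $g:x\to y$ in $\Gamma$, the square formed by $(\Phi^*\varrho)(g)=\varrho(\Phi(g))$ and $({\Phi'}^*\varrho)(g)=\varrho(\Phi'(g))$. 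Applying $\varrho$ to the naturality identity $\eta_y\circ\Phi(g)=\Phi'(g)\circ\eta_x$ of $\eta$ and using functoriality of $\varrho$ yields exactly $\varrho(\eta_y)\circ\varrho(\Phi(g))=\varrho(\Phi'(g))\circ\varrho(\eta_x)$, the required commutativity.

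The remaining parts concern parallel sections. For~\ref{pullbackvbgrpdmscc} I would verify that $(\Phi^*s)(x):=s(\Phi(x))$ defines a parallel section of $\Phi^*\varrho$: for $g:x\to y$ one computes $(\Phi^*s)(y)=s(\Phi(y))=\varrho(\Phi(g))\,s(\Phi(x))=(\Phi^*\varrho)(g)\,(\Phi^*s)(x)$, using that $s$ is parallel along $\Phi(g):\Phi(x)\to\Phi(y)$; linearity in $s$ is clear. Conceptually, by the Holonomy Principle (Proposition~\ref{satzholonomyprinciplemsc}) this is nothing but the canonical map on limits induced by the functor $\Phi$. Part~\ref{pullbackvbgrpdmscd} follows from $((\Psi\circ\Phi)^*s)(x)=s(\Psi(\Phi(x)))=(\Phi^*(\Psi^*s))(x)$. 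Finally, for the naturality square~\ref{pullbackvbgrpdmsce} I would chase a section $s\in\Par\varrho$: both composites, evaluated at $x$, give $\lambda_{\Phi(x)}\,s(\Phi(x))$, since $(\Phi^*\lambda)_x=\lambda_{\Phi(x)}$ by the definition of the pullback of a morphism and since $\lambda_*$ acts fibrewise by $(\lambda_*s)(y)=\lambda_y s(y)$.

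I expect no genuine obstacle; the only point demanding care is the bookkeeping in~\ref{pullbackvbgrpdmscb}, where one must correctly match the naturality of $\eta$ against the functoriality of $\varrho$ so as to identify the two parallel transports.
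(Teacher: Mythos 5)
Your verification is correct and is precisely the routine unwinding of definitions that the paper has in mind: the paper states this proposition without proof, regarding all five parts as immediate from associativity of functor composition, functoriality of $\varrho$, and naturality of $\eta$ and $\lambda$, which is exactly what you check. No gaps; the bookkeeping in part~\ref{pullbackvbgrpdmscb} and the diagram chase in part~\ref{pullbackvbgrpdmsce} are both carried out correctly.
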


\subsection{The symmetric monoidal category $\RepGrpd{K}$\label{secrepgrpdmsc}}
The symmetric monoidal category $\RepGrpd{K}$ of vector bundles over different groupoids can be seen as an equivariant version of the span category in \cite{mortonvec,morton1}. Its definition is sketched in \cite{fhlt} and worked out in \cite{haugseng,trova}.

In order to define $\RepGrpd{K}$ we need the notion of a homotopy pullback of a cospan of groupoids. 
A \emph{span of groupoids} is a diagram $\Gamma \longleftarrow \Lambda \to \Omega$ of groupoids and functors between them. Dually, a \emph{cospan of groupoids} is a diagram $\Gamma \to \Lambda \longleftarrow \Omega$ of groupoids and functors between them.

\begin{definition}\label{defhomotopypullbackofk}
To every cospan $\Gamma \stackrel{\Phi}{\longrightarrow} \Omega \stackrel{\Psi}{\longleftarrow} \Lambda$ of groupoids we can associate a groupoid $\Gamma \times_{\Omega} \Lambda$, called the \emph{homotopy pullback} or \emph{weak pullback} of $\Gamma \stackrel{\Phi}{\longrightarrow} \Omega \stackrel{\Psi}{\longleftarrow} \Lambda$, which is the groupoid of triples $(x,y,\eta_{x,y})$, where $x\in \Gamma$, $y\in \Lambda$ and $\Phi(x)\stackrel{\eta_{x,y}}{\cong} \Psi(y)$.  We have the obvious projection functors
$\pi_\Gamma : \Gamma \times_\Omega \Lambda \to \Gamma$ and 
$\pi_{\Lambda} : \Gamma \times_\Omega \Lambda \to \Lambda$. 
The assignment $\Gamma \times_{\Omega} \Lambda\ni(x,y,\eta_{x,y}) \mapsto \eta_{x,y}$ defines a natural isomorphism $\Phi \circ \pi_\Gamma \Rightarrow \Psi \circ \pi_\Lambda$, i.e.\ the square\begin{center}
\begin{tikzpicture}[scale=2, implies/.style={double,double equal sign distance,-implies},
     dot/.style={shape=circle,fill=black,minimum size=2pt,
                 inner sep=0pt,outer sep=0pt},]
\node (A1) at (0,1) {$\Gamma\times_\Omega \Lambda$};
\node (A2) at (1,1) {$\Gamma$};
\node (B1) at (0,0) {$\Lambda$};
\node (B2) at (1,0) {$\Omega$};
\path[->,font=\scriptsize]
(A1) edge node[above]{$\pi_\Gamma$} (A2)
(A1) edge node[left]{$\pi_\Lambda$} (B1)
(A2) edge node[right]{$\Phi$} (B2)
(B1) edge node[below]{$\Psi$} (B2);
\draw (A2) edge[implies] node[above] {\scriptsize$\eta\ $} (B1);
\end{tikzpicture}
\end{center} weakly commutes. In the case $\Lambda = \star$ the map $\star \to \Omega$ specifies an object $y \in \Omega$, and the resulting homotopy pullback is called the \emph{homotopy fiber $\Phi^{-1}[y]$ of $\Phi$ over $y$}. 
\end{definition}

\begin{definition}[The category $\RepGrpd{K}$]\label{defmscRepGrpd}
For a field $K$ define the category $\RepGrpd{K}$ as follows: \begin{myenumerate}
\item Objects are pairs $(\Gamma,\varrho)$, where $\varrho : \Gamma \to \FinVect_K$ is a vector bundle over an essentially finite groupoid $\Gamma$.

\item A morphism $(\Gamma,\varrho) \to (\Omega,\xi)$ is a equivalence class of pairs $(\Lambda, \lambda)$, where $\Gamma \stackrel{r_0}{\longleftarrow} \Lambda \stackrel{r_1}{\to} \Omega$ is a span of essentially finite groupoids and $\lambda : r_0^* \varrho \to r_1^* \xi$ is a morphism in $\VecBun_K(\Lambda)$.

\item The composition of the morphisms $(\Gamma,\rho) \stackrel{r_0}{\longleftarrow} (\Lambda,\lambda) \stackrel{r_1}{\to} (\Omega,\xi)$ and $(\Omega,\xi) \stackrel{r_1'}{\longleftarrow} (\Lambda',\lambda') \stackrel{r_2'}{\to} (\Xi,\nu)$ is defined to be equivalence class of the span $\Gamma \longleftarrow \Lambda \times_\Omega \Lambda' \to \Omega$ defined by the diagram
\begin{center}
\begin{tikzpicture}[scale=1,     implies/.style={double,double equal sign distance,-implies},
     dot/.style={shape=circle,fill=black,minimum size=2pt,
                 inner sep=0pt,outer sep=2pt},]
\node (A1) at (0,0) {$\Gamma$};
\node (A2) at (2,2) {$\Lambda$};
\node (A3) at (4,0) {$\Omega$};
\node (C) at (4,0.2) {$$};
\node (A4) at (6,2) {$\Lambda'$};
\node (A5) at (8,0) {$\Xi$};
\node (B2) at (4,4) {$\Lambda \times_\Omega \Lambda'$};
\path[->,font=\scriptsize]
(A2) edge node[above]{$r_0$} (A1)
(A2) edge node[above]{$r_1$} (A3)
(A4) edge node[above]{$r_1'\ $} (A3)
(A4) edge node[above]{$r_2'$} (A5)
(B2) edge node[left]{$p$} (A2)
(B2) edge node[right]{$\ p'$} (A4);
\draw (A2) edge[implies] node[above] {\scriptsize$\eta$} (A4);
\end{tikzpicture},
\end{center} where $\Lambda \times_\Omega \Lambda'$ is the weak pullback coming with the projections $p$ and $p'$ and the natural isomorphism $\eta : r_1 \circ p \Rightarrow r_1' \circ p'$. The needed morphism $\lambda' \times_\Omega \lambda :(r_0\circ p)^* \varrho \to (r_2 \circ p')^* \nu$ is defined as the composition
\begin{align}
(r_0\circ p)^* \varrho = p^* r_0^* \varrho \xrightarrow{p^* \lambda} p^* r_1^* \xi=(r_1 \circ p)^* \xi \xrightarrow{\xi(\eta)} (r_1'\circ p')^* \xi = {p'}^* {r_1'}^* \xi\xrightarrow{{p'}^* \lambda'} {p'}^* {r_2'}^* \nu=(r_2\circ p')^* \nu. 
\end{align}
Here $\xi(\eta) : (r_1 \circ p)^* \xi \to (r_1'\circ p')^* \xi$ is the morphism of vector bundles induced from $\eta : r_1 \circ p \Rightarrow r_1' \circ p'$ according to Proposition~\ref{pullbackvbgrpdmsc}, \ref{pullbackvbgrpdmscb}.

\end{myenumerate}

\end{definition}

\noindent The category $\RepGrpd{K}$ carries in a natural way the structure of a symmetric monoidal category. The tensor product is analogous to the external product known from $K$-theory, see \cite{hatcherktheorie}. 
The duality is inherited from $\FinVect_K$, see \cite{haugseng,trova}:

\begin{proposition}[]\label{satzmscdualizingreps}
The symmetric monoidal category $\RepGrpd{K}$ has coinciding left and right duals. 
\end{proposition}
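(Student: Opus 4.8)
The plan is to reduce the assertion to the mere existence of a dual for each object and then let the symmetry do the rest. Recall that in any symmetric monoidal category a left dual is automatically a right dual: if $Y$ is a left dual of $X$ with evaluation $\mathrm{ev}\colon Y\otimes X\to\mathbb{I}$ and coevaluation $\mathrm{coev}\colon\mathbb{I}\to X\otimes Y$, then conjugating with the symmetry $c$ yields a right duality datum $\mathrm{ev}\circ c_{X,Y}\colon X\otimes Y\to\mathbb{I}$ and $c_{X,Y}\circ\mathrm{coev}\colon\mathbb{I}\to Y\otimes X$ exhibiting the \emph{same} object $Y$ as a right dual, the two zig-zag identities following from those of the left dual together with naturality of the braiding. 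Hence it suffices to produce a left dual for every object $(\Gamma,\varrho)$, and the coincidence of left and right duals is then automatic.

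First I would lift the duality of $\FinVect_K$ fiberwise. For a vector bundle $\varrho\colon\Gamma\to\FinVect_K$ over an essentially finite groupoid I set $\varrho^*(x):=\varrho(x)^*$ on objects and, for a morphism $g\colon x\to y$, $\varrho^*(g):=\bigl(\varrho(g^{-1})\bigr)^*\colon\varrho(x)^*\to\varrho(y)^*$; this is a well-defined object of $\VecBun_K(\Gamma)$ precisely because $\Gamma$ is a groupoid and the fibers are finite-dimensional, and $(\Gamma,\varrho^*)$ is the candidate left dual. The evaluation and coevaluation are then written as spans with diagonal and terminal legs: $\mathrm{ev}$ is represented by $\Gamma\times\Gamma\xleftarrow{\Delta}\Gamma\xrightarrow{!}\star$ together with the morphism $\Delta^*(\varrho^*\boxtimes\varrho)\to{!}^*K$ given fiberwise by the evaluation $\varrho(x)^*\otimes\varrho(x)\to K$, and $\mathrm{coev}$ dually by $\star\xleftarrow{!}\Gamma\xrightarrow{\Delta}\Gamma\times\Gamma$ with the fiberwise coevaluation $K\to\varrho(x)\otimes\varrho(x)^*$. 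The only point to check here is that these fiberwise maps are genuine morphisms in $\VecBun_K(\Gamma)$, i.e.\ that they intertwine parallel transport; this holds by the very definition of $\varrho^*(g)$ as the inverse transpose, which is exactly what makes evaluation invariant.

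The main obstacle is the verification of the two zig-zag identities, and this is where the span formalism has to be handled with care. For the identity $(\mathrm{id}\otimes\mathrm{ev})\circ(\mathrm{coev}\otimes\mathrm{id})=\mathrm{id}_{(\Gamma,\varrho)}$ I would compute the composite span: it is formed as the weak pullback of two functors $\Gamma\times\Gamma\to\Gamma\times\Gamma\times\Gamma$ built from a diagonal in the first resp.\ last two factors, and I expect this weak pullback to be canonically equivalent to $\Gamma$ with the two outer legs becoming naturally isomorphic to $\mathrm{id}_\Gamma$. Under this equivalence the induced morphism of vector bundles reduces, through the isomorphisms $\varrho(\eta)$ of Proposition~\ref{pullbackvbgrpdmsc}, \ref{pullbackvbgrpdmscb} coming from the natural isomorphism $\eta$ of the pullback, to the fiberwise composite $\varrho(x)\to\varrho(x)\otimes\varrho(x)^*\otimes\varrho(x)\to\varrho(x)$, which equals the identity by the triangle identity already valid in $\FinVect_K$. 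Tracking $\eta$ through the weak pullback and confirming that the composite is equivalent to the identity span is the bookkeeping-heavy core of the argument; the second zig-zag identity is entirely analogous, and since every fiberwise ingredient is borrowed from the duality of $\FinVect_K$ the construction agrees with the one sketched in \cite{haugseng,trova}.
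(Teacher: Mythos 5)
Your proposal is correct and follows exactly the route the paper itself indicates ("the duality is inherited from $\FinVect_K$"), which the paper does not spell out but delegates to \cite{haugseng,trova}: dual bundle via inverse transpose, evaluation and coevaluation as spans with diagonal and terminal legs, zig-zags checked after identifying the weak pullback with $\Gamma$, and the symmetry upgrading left duals to right duals. The only part left as "bookkeeping" — tracking the natural isomorphism $\eta$ through the composite span and the induced maps $\xi(\eta)$ of Proposition~\ref{pullbackvbgrpdmsc},~\ref{pullbackvbgrpdmscb} — is indeed routine and works as you expect.
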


\subsection{Change to equivariant coefficients\label{secchangetoequivcoeff}}
Given a morphism $\lambda : G \to H$ of finite groups we will produce, by a procedure which will be referred to as \emph{change to equivariant coefficients}, from $G$-equivariant topological field theory $Z: G\text{-}\Cob(n) \to \Vect_K$ an $H$-equivariant topological field theory $\widehat{Z}^\lambda: H\text{-}\Cob(n) \to \RepGrpd{K}$ taking values in $\RepGrpd{K}$.

First recall that for a discrete group $G$, the functor $\Pi(?,BG)$ provides a model for the stack of $G$-bundles, see Remark~\ref{bmkgtftsmsc},~\ref{bmkGequivtft2}.
Let us list those properties of this stack relevant for our construction:

\begin{myenumerate}

\item Homotopy invariance: A homotopy $f \stackrel{h}{\simeq} g$ of maps $f,g: \Sigma \to \Sigma'$ gives rise to a natural isomorphism
\begin{align}
f^* \cong g^* : \Pi(\Sigma',BG) \to \Pi(\Sigma,BG)
\end{align} of the corresponding pullback functors. 

\item Additivity: For all manifolds $\Sigma$ and $\Sigma'$ the inclusions $\iota : \Sigma \to \Sigma \coprod \Sigma'$ and $\iota' : \Sigma' \to \Sigma \coprod \Sigma'$ induce an equivalence
\begin{align}
\Pi\left(\Sigma \coprod \Sigma,BG\right) \xrightarrow{\iota^* \times {\iota'}^*} \Pi(\Sigma,BG) \times \Pi(\Sigma',BG),
\end{align} and  $\Pi(\emptyset,BG)$ is naturally equivalent to the trivial groupoid with one object.

\item Finiteness: If $G$ is finite, then for every compact manifold $K$ (with boundary) the groupoid $\Pi(K,BG)$ is essentially finite, see \cite{morton1}.

\item Gluing property (with respect to bordisms): For morphisms $M: \Sigma_0 \to \Sigma_1$ and $M': \Sigma_1 \to \Sigma_2$ in $\Cob(n)$ for $n\ge 1$ the inclusions $j: M\to M'\circ M$ and $j' : M' \to M'\circ M$ induce an equivalence of groupoids
\begin{align}
\Pi(M'\circ M,BG) \xrightarrow{j^* \times {j'}^*} \Pi(M,BG)\times_{\Pi(\Sigma_1,BG)} \Pi(M',BG),
\end{align} where $\Pi(M,BG)\times_{\Pi(\Sigma_1,BG)} \Pi(M',BG)$ is the homotopy pullback of the cospan $\Pi(M,BG) \to \Pi(\Sigma_1,BG) \longleftarrow \Pi(M',BG)$. 
The gluing property follows from the fact that $\Pi(?,BG)$ is a stack and implies additivity. 
\end{myenumerate}

Now let $\lambda : G \to H$ be a morphism of finite groups and 
\begin{align}
\lambda_* : \Pi(?,BG) \to \Pi (?,BH)
\end{align} the induced stack morphism. From a $G$-equivariant topological field theory $Z: G\text{-}\Cob(n) \to \Vect_K$ we would like to construct a homotopy invariant symmetric monoidal functor $\widehat{Z}^\lambda : H\text{-}\Cob(n) \to \RepGrpd{K}$:
\begin{itemize}
\item To an object $(\Sigma,\varphi)$ in $H\text{-}\Cob(n)$ we assign the homotopy fiber $\lambda_*^{-1}[\varphi]$ defined by the homotopy pullback square
\begin{center}
\begin{tikzpicture}[scale=1.5, implies/.style={double,double equal sign distance,-implies},
dot/.style={shape=circle,fill=black,minimum size=2pt,
	inner sep=0pt,outer sep=2pt},]
\node (A1) at (0,1) {$\lambda_*^{-1}[\varphi]$};
\node (A2) at (2,1) {$\Pi(\Sigma,BG)$};
\node (B1) at (0,0) {$\star$};
\node (B2) at (2,0) {$\Pi (\Sigma,BH)$};
\path[->,font=\scriptsize]
(A1) edge node[above]{$q$} (A2)
(A1) edge node[left]{$$} (B1)
(A2) edge node[right]{$\lambda_*$} (B2)
(B1) edge node[below]{$\varphi$} (B2);
\end{tikzpicture}, 
\end{center} and the pullback vector bundle $q^* \varrho_\Sigma : \lambda_*^{-1}[\varphi] \to \Vect_K$, where $\varrho_\Sigma$ is the vector bundle that $Z$ gives rise to in the sense of Proposition~\ref{satzmscreprpigrpd}. 

\item To a morphism $(M,\psi) : (\Sigma_0,\varphi_0) \to (\Sigma_1,\varphi_1)$ in $H\text{-}\Cob(n)$ we assign \begin{itemize} \item the span
\begin{align}
\lambda_*^{-1}[\varphi_0] \stackrel{r_0}{\longleftarrow} \lambda^{-1}_*[\psi ] \stackrel{r_1}{\to} 	\lambda_*^{-1}[\varphi_1],
\end{align} in which the needed functors are induced by restriction, i.e.\ the functors
\begin{align}
q_0 : \lambda_*^{-1}[\varphi_0] & \to \Pi(\Sigma_0,BG),\\
q : \lambda_*^{-1}[\psi] & \to \Pi(M,BG),\\
q_1 : \lambda_*^{-1}[\varphi_0] & \to \Pi(\Sigma_1,BG)\end{align}
and the restriction functors
\begin{align}
\Pi(\Sigma_0,BG)  \stackrel{s_0}{\longleftarrow} \Pi(M,BG) \stackrel{s_1}{\to}    \Pi(\Sigma_1,BG) 
\end{align} fulfill $q_0 r_0=s_0q$ and $q_1 r_1 = s_1 q$,

\item and the intertwiner 
\begin{align} q^* Z(M,?) : r_0^* q_0^* \varrho_{\Sigma_0} = q^* s_0^* \varrho_{\Sigma_0} \to q^* s_1^* \varrho_{\Sigma_1} = r_1^* q_1^* \varrho_{\Sigma_1} \end{align} obtained as the pullback of the intertwiner $Z(M,?) : s_0^* \varrho_{\Sigma_0} \to s_1^* \varrho_{\Sigma_1}$. The fact that we actually get an intertwiner follows from Proposition~\ref{satzfreehtpmaponbordmmsc} (note that the homotopy invariance crucially enters here).

\end{itemize}

\end{itemize}

\begin{theorem}[Change to equivariant coefficients functor]\label{thmchangetoequivcoeff}
	For any morphism $\lambda : G \to H$ of finite groups the assignment $Z \mapsto \widehat{Z}^\lambda$ extends to a functor
	\begin{align} \widehat{?}^\lambda : \HSym(G\text{-}\Cob(n), \Vect_K) \to \HSym(H\text{-}\Cob(n) , \RepGrpd{K}).\end{align} 
\end{theorem}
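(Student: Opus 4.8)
The plan is to verify in turn that $\widehat{Z}^\lambda$ is a well-defined functor $H\text{-}\Cob(n)\to\RepGrpd{K}$, that it is symmetric monoidal and homotopy invariant, and finally that the assignment $Z\mapsto\widehat{Z}^\lambda$ is itself functorial. That the assigned data genuinely lie in $\RepGrpd{K}$ is built into the construction: the homotopy fibers are essentially finite because weak pullbacks of essentially finite groupoids are again such (finiteness property of $\Pi(?,BG)$), and the decorating maps are intertwiners by Proposition~\ref{satzfreehtpmaponbordmmsc}. For well-definedness on morphisms I would first check independence of the chosen bordism representative: an orientation-preserving diffeomorphism $\Phi:M\to M'$ with $\psi=\psi'\circ\Phi$ induces, by functoriality of $\Pi(?,BG)$, an equivalence $\Pi(M',BG)\to\Pi(M,BG)$ which lifts to an equivalence of homotopy fibers $\lambda_*^{-1}[\psi']\to\lambda_*^{-1}[\psi]$ compatible with both legs of the span and with the intertwiner, so the two spans represent the same morphism of $\RepGrpd{K}$. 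Preservation of identities follows from evaluating $\Pi(?,BG)$ on the cylinder over $\Sigma$ carrying the trivial homotopy: both restriction functors are equivalences, the induced span of homotopy fibers is the identity span on $\lambda_*^{-1}[\varphi]$, and the intertwiner $q^*Z(\Sigma\times[0,1],?)$ reduces to the identity by Proposition~\ref{satzmscreprpigrpd}.

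The heart of the proof, and the main obstacle, is preservation of composition. Given composable morphisms $(M,\psi):(\Sigma_0,\varphi_0)\to(\Sigma_1,\varphi_1)$ and $(M',\psi'):(\Sigma_1,\varphi_1)\to(\Sigma_2,\varphi_2)$, I would invoke the gluing property of $\Pi(?,BG)$ to identify $\Pi(M'\circ M,BG)$ with the homotopy pullback $\Pi(M,BG)\times_{\Pi(\Sigma_1,BG)}\Pi(M',BG)$, and then apply the pasting law for homotopy pullbacks to descend this identification to the homotopy fibers, obtaining a canonical equivalence $\lambda_*^{-1}[\psi'\cup\psi]\simeq\lambda_*^{-1}[\psi]\times_{\lambda_*^{-1}[\varphi_1]}\lambda_*^{-1}[\psi']$. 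This matches precisely the weak pullback governing composition in $\RepGrpd{K}$ according to Definition~\ref{defmscRepGrpd}. It then remains to identify the pulled-back intertwiner $q^*Z(M'\circ M,?)$ with the composite intertwiner built in that definition. Using that $Z$ is a functor one has $Z(M'\circ M,?)=Z(M',?)\circ Z(M,?)$, while the natural isomorphism $\eta$ appearing in the weak pullback encodes the holonomy along $\Sigma_1$ that must be transported by $\varrho_{\Sigma_1}$; the compatibility of this transport with the intertwiners is exactly the content of the intertwining property of Proposition~\ref{satzfreehtpmaponbordmmsc}, and this is where homotopy invariance enters decisively.

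For the symmetric monoidal structure I would use the additivity of $\Pi(?,BG)$, which turns disjoint unions of manifolds into products of mapping groupoids and sends $\emptyset$ to the trivial groupoid; since the tensor product on $\RepGrpd{K}$ is the external product and $Z$ is itself symmetric monoidal, these equivalences assemble into coherent structure isomorphisms compatible with the unit and the symmetry. Homotopy invariance of $\widehat{Z}^\lambda$ follows from the homotopy invariance of $\Pi(?,BG)$: a homotopy $\psi_0\simeq\psi_1$ relative $\partial M$ yields a natural isomorphism of the corresponding pullback functors, hence an equivalence $\lambda_*^{-1}[\psi_0]\simeq\lambda_*^{-1}[\psi_1]$ fixing the boundary data, and combined with the homotopy invariance of $Z$ this shows the two associated morphisms of $\RepGrpd{K}$ coincide, so that $\widehat{Z}^\lambda$ indeed lands in $\HSym(H\text{-}\Cob(n),\RepGrpd{K})$.

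Finally, functoriality in $Z$ is immediate. A morphism $\alpha:Z\Rightarrow Z'$ in the source groupoid restricts, for each $\Sigma$, to an isomorphism $\varrho_\Sigma\cong\varrho'_\Sigma$ of groupoid representations (naturality of $\alpha$ against cylinders gives the intertwining with parallel transport), whose pullbacks $q^*\varrho_\Sigma\cong q^*\varrho'_\Sigma$ along $q:\lambda_*^{-1}[\varphi]\to\Pi(\Sigma,BG)$ define, via the identity span, the components of a monoidal natural isomorphism $\widehat{\alpha}^\lambda:\widehat{Z}^\lambda\Rightarrow\widehat{Z'}^\lambda$. Compatibility with composition and identities of natural transformations then holds by construction, completing the verification that $\widehat{?}^\lambda$ is a functor.
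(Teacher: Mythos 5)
Your proposal is correct and follows essentially the same route as the paper: the gluing property of the stack $\Pi(?,BG)$ combined with the (pasting law for the) homotopy pullback handles composition, additivity handles the monoidal structure, homotopy invariance of the stack plus that of $Z$ handles homotopy invariance, and the intertwining property of Proposition~\ref{satzfreehtpmaponbordmmsc} guarantees the decorations are morphisms in $\RepGrpd{K}$. You supply several details the paper leaves implicit (well-definedness on bordism representatives, the identity check, and the explicit construction of $\widehat{\alpha}^\lambda$ for functoriality in $Z$), but these are elaborations of the same argument rather than a different approach.
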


\begin{proof}
	We will only prove that for a given $G$-equivariant topological field theory $Z: G\text{-}\Cob(n) \to \Vect_K$ the definitions above make $\widehat{Z}^\lambda$ into a homotopy invariant symmetric monoidal functor from $H\text{-}\Cob(n)$ to $\RepGrpd{K}$. Once this is established, the functoriality of $\widehat{?}^\lambda$ will be clear. 
	
	\begin{pnum}
		\item Clearly, $\widehat{Z}^\lambda$ respects identities, so we turn directly to the gluing law: For morphisms $(M,\psi) : (\Sigma_0,\varphi_0) \to (\Sigma_1,\varphi_1)$ and $(M',\psi') : (\Sigma_1,\varphi_1) \to (\Sigma_2,\varphi_2)$ in $H\text{-}\Cob(n)$ the span part of the composition of $\widehat{Z}^\lambda(M,\psi)$ and $\widehat{Z}^\lambda(M',\psi')$ in $\RepGrpd{K}$ is the homotopy pullback of 
		\begin{align}
		\lambda_*^{-1}[\psi] \stackrel{r_1}{\to} \lambda^{-1}_*[\varphi_1 ] \stackrel{r_1'}{\longleftarrow} 	\lambda_*^{-1}[\psi'],
		\end{align} which by the universal property of the homotopy pullback and the gluing property of the stacks $\Pi(?,BG)$ and $\Pi(?,BH)$ can be seen to be naturally equivalent to $\lambda_*^{-1}[\psi \cup \psi']$, where $\psi \cup \psi' : M'\circ M \to BG$ is the map that $\psi$ and $\psi'$ give rise to since they coincide on $\Sigma_1$. Next we observe that the span
		\begin{align}
		\lambda_*^{-1}[\psi] \longleftarrow \lambda_*^{-1}[\psi \cup \psi'] \to \lambda_*^{-1}[\psi']
		\end{align} coming from restriction is the span part of the image of the composition of $(M,\psi)$ and $(M,\psi')$ under $\widehat{Z}^\lambda$. The gluing law can now be directly verified.
		\item The monoidal structure and the symmetry requirement is defined and checked, respectively, using the additivity of the stacks $\Pi(?,BG)$ and $\Pi(?,BH)$.

		\item For the proof of the homotopy invariance consider two morphisms $(M,\psi) , (M,\psi') : (\Sigma_0,\varphi_0) \to (\Sigma_1, \varphi_1)$. A homotopy $\psi \simeq \psi'$ relative $\partial M$ induces an isomorphism $\lambda_*^{-1}[\psi] \to \lambda_*^{-1}[\psi']$ making the diagram
		\begin{center}
			\begin{tikzpicture}[scale=1,     implies/.style={double,double equal sign distance,-implies},
			dot/.style={shape=circle,fill=black,minimum size=2pt,
				inner sep=0pt,outer sep=2pt},]
			\node (A1) at (0,0) {$\lambda^{-1}_*[\varphi_0]$};
			\node (A2) at (2,1) {$\lambda^{-1}_*[\psi]$};
			\node (A3) at (4,0) {$\lambda^{-1}_*[\varphi_1]$};
			\node (B2) at (2,-1) {$\lambda^{-1}_*[\psi']$};
			\node (B1) at (1,-0.5) {$$};
			\node (B3) at (3,-0.5) {$$};
			\path[->,font=\scriptsize]
			(A2) edge node[above]{$$} (A1)
			(A2) edge node[above]{$$} (A3)
			(B2) edge node[below]{$$} (A1)
			(B2) edge node[below]{$$} (A3)
			(A2) edge node[right]{$$} (B2);
			\end{tikzpicture}
			\end{center} commute. This together with the homotopy invariance of $Z$ implies the homotopy invariance of $\widehat{Z}^\lambda$.    
			
			\end{pnum}
			
			\end{proof}

\begin{remark}\label{bmkfhltcl}
In the special case $H=1$ Theorem~\ref{thmchangetoequivcoeff} allows us to obtain from a $G$-equivariant topological field theory $Z$ an ordinary topological field theory $\widehat{Z} : \Cob (n) \to \RepGrpd{K}$ making the triangle 
\begin{center}
	\begin{tikzpicture}[scale=1,     implies/.style={double,double equal sign distance,-implies},
dot/.style={shape=circle,fill=black,minimum size=2pt,
	inner sep=0pt,outer sep=2pt},]
\node (A1) at (0,0) {$\Cob(n)$};
\node (A2) at (4,0) {$\RepGrpd{K}$};
\node (A3) at (2,-2) {$\SpanGrpd$};
\path[->,font=\scriptsize]
(A1) edge node[above]{$\widehat{Z}$} (A2)
(A1) edge node[left]{$\Pi(?,BG)$} (A3)
(A2) edge node[right]{$U$} (A3);
\end{tikzpicture}
\end{center} commute, where $U : \RepGrpd{K} \to \SpanGrpd$ is the forgetful functor to the category of spans of groupoids. Hence, in a special case, the change to equivariant coefficients allows us to relate homotopy quantum field theories to such commuting triangles which in \cite{fhlt} are referred to as a \emph{classical field theory with $G$-bundle background}. 
\end{remark}

\begin{example}[Crossed Frobenius $G$-algebras]\label{exGcrossedproof}
The change of coefficients offers an elegant and direct approach to the notion of a crossed Frobenius $G$-algebra as appearing in \cite[II,~3.2]{turaevhqft}:
For this note that \cite[Theorem~3.6.19]{kock} applied to the symmetric monoidal category $\RepGrpd{K}$ yields an equivalence 
\begin{align}
\Sym(\Cob(2),\RepGrpd{K}) \cong \cFrob(\RepGrpd{K})
\end{align} between the group\-o\-id of two-dimensional $\RepGrpd{K}$-valued topological field theories and the group\-oid of commutative Frobenius algebras in $\RepGrpd{K}$. 
Concatenating with the change of coefficients functor for the morphism $G\to 1$ from $G$ to the trivial group we obtain a functor
\begin{align} \HSym(G\text{-}\Cob(2),\Vect_K) \to \Sym(\Cob(2),\RepGrpd{K}) \cong \cFrob(\RepGrpd{K}) \label{exGcrossedproof1} \end{align}  assigning to a two-dimensional $G$-equivariant topological field theory $Z: G\text{-}\Cob(2)\to \Vect_K$ a commutative Frobenius algebra $\mathfrak{A}$ in the symmetric monoidal category $\RepGrpd{K}$. We claim that a commutative Frobenius algebra $\mathfrak{A}$ in $\RepGrpd{K}$ amounts precisely to the following data: $\mathfrak{A}$ has an underyling functor
\begin{align}
\varrho_\mathfrak{A} : \Pi(\sphere^1,BG)\cong G//G \to \Vect_K.
\end{align} By $\mathfrak{A}$ we also denote the direct sum $\mathfrak{A} = \bigoplus_{g\in G} \mathfrak{A}_g$, where $\mathfrak{A}_g := \varrho_\mathfrak{A}(g)$ for an object $g\in G$. For $g\in G$ and $v\in \mathfrak{A}_h$ we use the abbreviation $g.v:= \varrho_\mathfrak{A}(g)v\in \mathfrak{A}_{ghg^{-1}}$. We see $\mathfrak{A}$ and hence also $\mathfrak{A} \otimes \mathfrak{A}$ as a $G$-representation. 
We can specify the Frobenius structure by giving the associative and unital multiplication 
\begin{align}
\mu : \mathfrak{A} \otimes \mathfrak{A} \to \mathfrak{A} , \quad v \otimes w \mapsto vw
\end{align}
and the non-denegerate pairing 
\begin{align}
\kappa : \mathfrak{A} \otimes \mathfrak{A} \to K;
\end{align}
for a detailed account on the equivalent ways to describe Frobenius structures see \cite{fuchsstignerfrobenius}. They satisfy:
\begin{myenumerate}
\item The multiplication carries $\mathfrak{A}_g \otimes \mathfrak{A}_h$ to $\mathfrak{A}_{gh}$, has unit in $\mathfrak{A}_1$ and intertwines with the $G$-action. \label{exGcrossedproofa}

\item The pairing is $G$-invariant and satisfies $\kappa|_{\mathfrak{A}_g \otimes \mathfrak{A}_h}=0$ if $h\neq g^{-1}$. 

\end{myenumerate}
This summarizes the Frobenius structure on $\mathfrak{A}$. 
\begin{myenumerate}
\item[{\normalfont (c)}] The commutativity constraint is given by\label{exGcrossedproofc}
\begin{align}
vw = (g.w)(g.v) \myforall v\in \mathfrak{A}_g, \quad w \in \mathfrak{A}.
\end{align}\end{myenumerate}
The proof of the above statements follows directly from the definition of $\RepGrpd{K}$ if we recall that the multiplication is obtained by evaluation on the pair of pants, the pairing by evaluation on the bent cylinder etc.

We could ask whether the functor \eqref{exGcrossedproof1} from two-dimensional $G$-equivariant topological field theories to commutative Frobenius objects in $\RepGrpd{K}$ is an equivalence. Rephrasing Theorem~3.1 in \cite{turaevhqft} in the language of this section, it will be an equivalence once we restrict in range to those Frobenius objects satisfying the following additional properties:
\begin{myenumerate}
\item[{\normalfont (F1)}] Self-invariance of twisted sectors: Any element $g\in G$ acts trivially on $\mathfrak{A}_g$, i.e.\ $g.v=v$ for $v\in \mathfrak{A}_g$. This entails that the commutativity constraint takes the form 
\begin{align} vw = (g.w)v \myforall v\in \mathfrak{A}_g, \quad w \in \mathfrak{A}.\end{align}

\item[{\normalfont (F2)}] Trace property: For $g,h\in G$ and $v\in \mathfrak{A}_{ghg^{-1}h^{-1}}$ the equality\enlargethispage*{1cm}
\begin{align} 
\tr_{\mathfrak{A}_g} vh = \tr_{\mathfrak{A}_h} g^{-1}v
\end{align} holds, where $v$ is the multiplication map by $v$ from the left and $g$ and $h$ denote the automorphisms coming from the action with these elements.

\end{myenumerate}
If a Frobenius object in $\RepGrpd{K}$ satisfies these axioms, it is called \emph{crossed Frobenius $G$-algebra}. In summary, Theorem~3.1 in \cite{turaevhqft} implies that \eqref{exGcrossedproof1} becomes an equivalence once restricted in range to crossed Frobenius $G$-algebras. Hence, the change to equivariant coefficients allow us to play back large parts of the equivariant classification problem to the non-equivariant one. This leads to significant simplifications of the proof given in \cite[pages~40-64]{turaevhqft}. 
\end{example}

\subsection{Parallel section functor\label{secparsec}}
The definition of the pushforward operation relies crucially on a symmetric monoidal functor from the category $\RepGrpd{K}$ to $\Vect_K$ that we can obtain from the results in \cite{trova} which are based on ideas in \cite{fhlt}. To apply these results we need to assume that $K$ is a field of characteristic zero.
In fact, \cite{trova} provides two such functors which, in the setup of the present article, will coincide. Hence, we obtain one functor
\begin{align}
\Par : \RepGrpd{K} \to \Vect_K
\end{align} that we will refer to as \emph{parallel section functor}. We will express this functor in terms of pushforward maps and use these to provide concrete formulae for the pushforward operation and orbifold construction. The definition of the pushforward map make use of integrals with respect to groupoid cardinality which are recalled in Appendix~\ref{secgrpdkardint}.

\begin{definition}[Pushforward map]\label{defpushmap}
Let $\Phi : \Gamma \to \Omega$ be a functor between essentially finite groupoids and $K$ a field of characteristic zero. Then for any $K$-vector bundle $\varrho$ over $\Omega$ we define the \emph{pushforward map}
\begin{align}
\Phi_* :   \Par \Phi^* \varrho\to  \Par \varrho \end{align} by \begin{align}
(\Phi_* s)(y) = \sum_{[x,g]\in \pi_0(\Phi^{-1}[y])} \frac{\varrho(g)s(x)}{|\! \Aut(x,g)|} = \int_{\Phi^{-1}[y]} \varrho(g)s(x)\,\td (x,g)  \in \varrho(y)
\end{align} for any parallel section $s$ of $\Phi^* \varrho$ and $y\in \Omega$.\end{definition}

\begin{remarks}\label{bmkmscpushmap}

\item Obviously, $\Aut(x,g)\cong \Aut_0(x) := \ker (\Aut(x) \to \Aut(\Phi(x)))$ and hence
\begin{align} (\Phi_* s)(y)  = \sum_{[x,g]\in \pi_0(\Phi^{-1}[y])} \frac{\varrho(g)s(x)}{|\! \Aut_0(x)|}.\end{align} 

\item If $\Omega = \Gamma$ and $\Phi = \id_\Gamma$, then $\Phi^{-1}[y]$ is connected and $|\! \Aut_0(x)|=1$. This implies ${\id_\Gamma}_* = \id_{\Par \varrho}$.\label{bmkmscpushmap2}

\end{remarks}

\noindent For pushforward maps we can derive another formula that will be useful in the sequel. It can be verified by a direct computation: 

\begin{corollary}[]\label{kormscpushmap}
Let $\Phi : \Gamma \to \Omega$ be a functor between essentially finite groupoids and $K$ a field of characteristic zero. Then for any $K$-vector bundle $\varrho$ over $\Omega$ and any parallel section $s$ of $\Phi^* \varrho$ the formula
\begin{align}
(\Phi_* s)(y) = \sum_{\substack{[x]\in \pi_0(\Gamma) \\ \Phi(x)\stackrel{h}{\cong} y}} \sum_{g\in \Aut(y)} \frac{\varrho(g)\varrho(h) s(x)}{|\! \Aut(x)|} \myforall y\in \Omega \end{align} holds. Here in the first sum $\Phi(x)\stackrel{h}{\cong} y$ expresses the condition that such an isomorphism exists. Its choice is irrelevant. Note that we do not sum over all $h$.
\end{corollary}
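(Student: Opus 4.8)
The plan is to rewrite the integral over the homotopy fiber appearing in Definition~\ref{defpushmap} as a sum over $\pi_0(\Gamma)$. Recall that $(\Phi_* s)(y) = \int_{\Phi^{-1}[y]} \varrho(g)s(x)\,\td(x,g)$, where by Definition~\ref{defhomotopypullbackofk} the homotopy fiber $\Phi^{-1}[y]$ has as objects the pairs $(x,g)$ with $x\in\Gamma$ and $g:\Phi(x)\stackrel{\cong}{\to} y$, and as morphisms $(x,g)\to(x',g')$ those $\alpha:x\to x'$ in $\Gamma$ with $g'\circ\Phi(\alpha)=g$. First I would fix, for each isomorphism class $[x]\in\pi_0(\Gamma)$ with $\Phi(x)\cong y$, a representative $x$ together with a chosen isomorphism $h:\Phi(x)\stackrel{\cong}{\to} y$, and restrict attention to the full subgroupoid $\mathcal{F}_x$ of $\Phi^{-1}[y]$ on those objects $(x,g)$ having this fixed first component. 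Every object of $\Phi^{-1}[y]$ lying over $[x]$ is isomorphic to one in $\mathcal{F}_x$ (transport $g$ along an isomorphism into $x$), so the $\mathcal{F}_x$ jointly exhaust all components, and the integral splits as a sum over such $[x]$ of the integrals over the $\mathcal{F}_x$.

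Next I would identify $\mathcal{F}_x$ with the action groupoid $\{\,g:\Phi(x)\stackrel{\cong}{\to} y\,\}/\!\!/\Aut(x)$, where $\alpha\in\Aut(x)$ acts by $g\mapsto g\circ\Phi(\alpha)^{-1}$; here the stabiliser of $(x,g)$ is exactly $\ker(\Aut(x)\to\Aut(\Phi(x)))=\Aut_0(x)$, matching the isomorphism $\Aut(x,g)\cong\Aut_0(x)$ from Remark~\ref{bmkmscpushmap}. The integrand $\varrho(g)s(x)$ is constant on the $\Aut(x)$-orbits: since $s$ is a parallel section of $\Phi^*\varrho$ (Definition~\ref{defparallelsectionsmsc}), one has $\varrho(\Phi(\alpha))s(x)=s(x)$ for every $\alpha\in\Aut(x)$, whence $\varrho(g\circ\Phi(\alpha)^{-1})s(x)=\varrho(g)s(x)$. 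For an action groupoid of a finite group acting on a finite set, the groupoid-cardinality integral of an invariant function $F$ equals $\tfrac{1}{|\Aut(x)|}\sum_g F(g)$ by orbit--stabiliser, so $\int_{\mathcal{F}_x}\varrho(g)s(x)\,\td(x,g)=\tfrac{1}{|\Aut(x)|}\sum_{g:\Phi(x)\cong y}\varrho(g)s(x)$.

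Finally I would reindex the isomorphisms $g:\Phi(x)\stackrel{\cong}{\to} y$ as $g=g'\circ h$ with $g'$ ranging over $\Aut(y)$; this is a bijection, so the inner sum becomes $\sum_{g'\in\Aut(y)}\varrho(g')\varrho(h)s(x)$, which is the asserted formula, and replacing $h$ by any $g_0\circ h$ merely reindexes $g'$, confirming that the choice of $h$ is irrelevant. The one point requiring care --- and the step I expect to be the crux --- is the invariance of the integrand under the $\Aut(x)$-action, which is precisely where the parallel-section hypothesis enters; granting this, the passage from the weight $1/|\Aut(x,g)|=1/|\Aut_0(x)|$ to $1/|\Aut(x)|$ is exactly the orbit--stabiliser bookkeeping, and the remainder is the direct computation promised in the statement.
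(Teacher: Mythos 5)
Your computation is correct: the decomposition of $\Phi^{-1}[y]$ into the full subgroupoids $\mathcal{F}_x$, the identification with action groupoids whose stabilisers are $\Aut_0(x)$, the use of the parallel-section property for orbit-invariance, and the orbit--stabiliser conversion from $1/|\Aut_0(x)|$ to $1/|\Aut(x)|$ together with the reindexing $g=g'\circ h$ constitute exactly the ``direct computation'' the paper invokes without writing out. Nothing is missing.
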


\noindent For pullback maps we derived a composition law and a naturality condition in Proposition~\ref{pullbackvbgrpdmsc}. Both results have analogues for pushforward maps and can be checked by a direct computation:

\begin{proposition}[]\label{satzmsceigpushmaps}
Let $\Phi : \Gamma \to \Omega$ be a functor between essentially finite groupoids and $K$ a field of characteristic zero.
\begin{myenumerate}
\item The pushforward maps obey the composition law $(\Psi\circ \Phi)_* = \Psi_* \circ \Phi_*$, where $\Psi : \Omega \to \Lambda$ is another functor between essentially finite groupoids.\label{satzmsceigpushmapsa}

\item The pushforward maps are natural in the sense that they provide a natural transformation\label{satzmsceigpushmapsb}
\begin{align}
\Par_\Gamma \circ\  \Phi^* \to \Par_\Omega, 
\end{align} i.e.\ for any morphism $\lambda : \varrho \to \xi$ of vector bundles over $\Omega$ the square 
\begin{center}
\begin{tikzpicture}[scale=2]
\node (A1) at (0,1) {$\Par \Phi^*\varrho$};
\node (A2) at (1,1) {$\Par \varrho$};
\node (B1) at (0,0) {$\Par \Phi^* \xi$};
\node (B2) at (1,0) {$\Par \xi$};
\path[->,font=\scriptsize]
(A1) edge node[above]{$\Phi_*$} (A2)
(A1) edge node[left]{$(\Phi^* \lambda)_*$} (B1)
(B1) edge node[above]{$\Phi_*$} (B2)
(A2) edge node[right]{$\lambda_*$} (B2);
\end{tikzpicture}
\end{center}
commutes.
\end{myenumerate}
\end{proposition}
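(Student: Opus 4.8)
The plan is to verify both statements by evaluating the composites on a parallel section $s$ and an object of the target groupoid, reading everything off the defining formula of Definition~\ref{defpushmap}; part \ref{satzmsceigpushmapsb} is almost immediate, while part \ref{satzmsceigpushmapsa} requires one genuine groupoid-theoretic input.

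\textbf{Naturality \ref{satzmsceigpushmapsb}.} Fix $s\in\Par\Phi^*\varrho$ and $y\in\Omega$. Since the pullback morphism has components $(\Phi^*\lambda)_x=\lambda_{\Phi(x)}$, Definition~\ref{defpushmap} applied first to $\Phi^*\xi$ and then to $\varrho$ gives
\begin{align}
\bigl(\Phi_*(\Phi^*\lambda)_* s\bigr)(y)=\sum_{[x,g]\in\pi_0(\Phi^{-1}[y])}\frac{\xi(g)\,\lambda_{\Phi(x)}\,s(x)}{|\!\Aut(x,g)|},\qquad \bigl(\lambda_*\Phi_* s\bigr)(y)=\sum_{[x,g]\in\pi_0(\Phi^{-1}[y])}\frac{\lambda_y\,\varrho(g)\,s(x)}{|\!\Aut(x,g)|}.
\end{align}
Each representative $(x,g)$ of an object of $\Phi^{-1}[y]$ carries an isomorphism $g:\Phi(x)\to y$, and because $\lambda$ is a morphism of vector bundles over $\Omega$, i.e.\ a natural transformation, the intertwining identity $\lambda_y\,\varrho(g)=\xi(g)\,\lambda_{\Phi(x)}$ holds for exactly this $g$. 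Hence the two sums agree term by term, which is the asserted commutativity.

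\textbf{Composition law \ref{satzmsceigpushmapsa}.} Here $\varrho$ is a vector bundle over $\Lambda$, and the pullback composition law (Proposition~\ref{pullbackvbgrpdmsc}, \ref{pullbackvbgrpdmsca}) identifies $\Par(\Psi\circ\Phi)^*\varrho=\Par\Phi^*\Psi^*\varrho$, so both maps share a source and target. Fixing $s$ and $z\in\Lambda$ and using $(\Psi^*\varrho)(g)=\varrho(\Psi(g))$, I would write the right-hand composite as the iterated integral
\begin{align}
\bigl(\Psi_*\Phi_* s\bigr)(z)=\int_{\Psi^{-1}[z]}\varrho(h)\Bigl(\int_{\Phi^{-1}[y]}\varrho(\Psi(g))\,s(x)\,\td(x,g)\Bigr)\td(y,h),
\end{align}
and then invoke Fubini for groupoid cardinality (Appendix~\ref{secgrpdkardint}) to collapse it to a single integral over the total fibre groupoid $\mathcal{T}_z=\Gamma\times_\Omega\Psi^{-1}[z]$ of quadruples $(y,h,x,g)$ with $g:\Phi(x)\cong y$ and $h:\Psi(y)\cong z$, whose integrand is $\varrho(h)\varrho(\Psi(g))s(x)=\varrho(h\circ\Psi(g))\,s(x)$.

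The crux, which I expect to be the main obstacle, is identifying $\mathcal{T}_z$ with the homotopy fibre $(\Psi\circ\Phi)^{-1}[z]$ of the composite. I would exhibit the functor $F:\mathcal{T}_z\to(\Psi\circ\Phi)^{-1}[z]$ sending $(y,h,x,g)\mapsto(x,\,h\circ\Psi(g))$ on objects and $(a,b)\mapsto a$ on morphisms, and check it is an equivalence: it is split essentially surjective via $(x,k)\mapsto(\Phi(x),k,x,\id_{\Phi(x)})$, and full faithfulness follows because a morphism $a:x\to x'$ over $z$ forces its partner $b=g'\circ\Phi(a)\circ g^{-1}$, after which the remaining constraint $h'\circ\Psi(b)=h$ is automatic. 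Since the integrand on $\mathcal{T}_z$ is exactly the $F$-pullback of the integrand $\varrho(k)\,s(x)$ on $(\Psi\circ\Phi)^{-1}[z]$, invariance of the groupoid integral under equivalences yields
\begin{align}
\bigl(\Psi_*\Phi_* s\bigr)(z)=\int_{(\Psi\circ\Phi)^{-1}[z]}\varrho(k)\,s(x)\,\td(x,k)=\bigl((\Psi\circ\Phi)_* s\bigr)(z),
\end{align}
completing \ref{satzmsceigpushmapsa}. The only genuinely delicate point is the automorphism-group bookkeeping inside $F$, equivalently the multiplicativity of groupoid cardinality along the tower of homotopy fibres underlying the Fubini step; this is precisely what equivalence-invariance of the integral absorbs once $F$ is shown to be an equivalence, and everything else is formal manipulation of the defining formula.
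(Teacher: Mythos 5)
Your proof is correct. The paper itself gives no argument here beyond the remark that both parts ``can be checked by a direct computation,'' and your computation supplies exactly that: part \ref{satzmsceigpushmapsb} is the term-by-term application of naturality of $\lambda$, and part \ref{satzmsceigpushmapsa} is carried out with precisely the toolkit the paper itself assembles and later deploys for the composition law of the pushforward operation at the level of invariants (Theorem~\ref{thmgluinglawpush}), namely the identification $(\Psi\circ\Phi)^{-1}[z]\simeq \Gamma\times_\Omega\Psi^{-1}[z]$ of Lemma~\ref{lemmahtppullbackcomposition}, the transformation formula (Proposition~\ref{satzmsctransformationformula}), and the generalized Cavalieri principle (Proposition~\ref{satzmscgencavalierisprinciple}). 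Your verification that the comparison functor $F$ is an equivalence, including the observation that the constraint $h'\circ\Psi(b)=h$ is forced once $a$ is a morphism in the composite fibre, is the only genuinely nontrivial bookkeeping, and it is done correctly.
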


\noindent Using the push maps we can rephrase \cite[Theorem~5.4]{trova} for the symmetric monoidal category of finite-dimensional vector spaces over a field of characteristic zero: 

\begin{theorem}[Parallel section functor]\label{thmmscparsymfunctor}
For a field $K$ of characteristic zero the assignment
\begin{align}
\Par \ :\ \RepGrpd{K} &\to \Vect_K \\ (\Gamma,\varrho) & \mapsto \Par \varrho \\ \left((\Gamma_0,\varrho_0) \stackrel{r_0}{\longleftarrow} (\Lambda,\lambda) \stackrel{r_1}{\to} (\Gamma_1,\varrho_1)\right) & \mapsto ({r_1}_* \lambda_* r_0^* : \Par \varrho_0 \to \Par \varrho_1)
\end{align} yields a symmetric monoidal functor. We will refer to this functor as parallel section functor.   
\end{theorem}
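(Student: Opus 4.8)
The plan is to treat this as a concrete realization of the functor of \cite[Theorem~5.4]{trova}: the reference already establishes that parallel sections assemble into a symmetric monoidal functor out of $\RepGrpd{K}$, so what I would verify here is that the explicit assignment sending a span to $r_{1*}\lambda_* r_0^*$ is well defined on equivalence classes and reproduces that functor, using the push/pull calculus of Propositions~\ref{pullbackvbgrpdmsc} and~\ref{satzmsceigpushmaps} together with the Holonomy principle~\ref{satzholonomyprinciplemsc}. First I would check well-definedness: two representatives of the same morphism differ by an equivalence of spans lying over the identities on $(\Gamma_0,\varrho_0)$ and $(\Gamma_1,\varrho_1)$, and the induced pullback/pushforward isomorphisms cancel by the composition laws~\ref{pullbackvbgrpdmsca}, \ref{pullbackvbgrpdmscd}, \ref{satzmsceigpushmapsa} and the coherence~\ref{pullbackvbgrpdmscb}. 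The identity of $(\Gamma,\varrho)$ is the span of identities with $\lambda=\id$, whose image is $\id_*\,(\id)_*\,\id^{*}=\id_{\Par\varrho}$ by Remark~\ref{bmkmscpushmap2}.

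The main work is functoriality under composition. Given composable spans with weak pullback $\Lambda\times_{\Gamma_1}\Lambda'$, projections $p,p'$ and natural isomorphism $\eta:r_1 p\Rightarrow r_1' p'$, the composite morphism of Definition~\ref{defmscRepGrpd} carries the intertwiner $\lambda'\times_{\Gamma_1}\lambda$ obtained by splicing $p^{*}\lambda$, $\varrho_1(\eta)$ and ${p'}^{*}\lambda'$. Applying the composition laws~\ref{pullbackvbgrpdmsca} and~\ref{satzmsceigpushmapsa} to split $(r_2' p')_{*}=r'_{2*}p'_{*}$ and $(r_0 p)^{*}=p^{*}r_0^{*}$, and the naturality squares~\ref{pullbackvbgrpdmsce} and~\ref{satzmsceigpushmapsb} to move $\lambda$ and $\lambda'$ past $p^{*}$ and $p'_{*}$, I would reduce the identity
\begin{align}
r'_{2*}(\lambda'\times_{\Gamma_1}\lambda)_*(r_0 p)^{*}=\bigl(r'_{2*}\lambda'_* {r_1'}^{*}\bigr)\circ\bigl(r_{1*}\lambda_* r_0^{*}\bigr)
\end{align}
to the single base-change identity
\begin{align}
p'_{*}\circ\varrho_1(\eta)_*\circ p^{*}={r_1'}^{*}\circ r_{1*}:\ \Par r_1^{*}\varrho_1\to\Par {r_1'}^{*}\varrho_1
\end{align}
for the homotopy pullback square.

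The hard part is this Beck--Chevalley identity, and it is where I expect the real obstacle to lie. I would prove it by the explicit formula of Corollary~\ref{kormscpushmap}: evaluating the left-hand side at $y'\in\Lambda'$, the pushforward $p'_{*}$ sums over $\pi_0$ of the homotopy fiber ${p'}^{-1}[y']$, which by the description of the weak pullback in Definition~\ref{defhomotopypullbackofk} is indexed by pairs $(x,\zeta)$ with $\zeta:r_1(x)\cong r_1'(y')$, while $\varrho_1(\eta)_*$ inserts exactly the parallel transport $\varrho_1(\zeta)$; the right-hand side is $r_{1*}s$ evaluated at $r_1'(y')$. Matching the two then becomes a groupoid-cardinality (Fubini) computation comparing $|\Aut(x,y',\zeta)|$ with $|\Aut(x)|$ and $|\Aut(r_1'(y'))|$, for which the hypothesis that $K$ has characteristic zero is essential so that these cardinalities are invertible; here one uses Remark~\ref{bmkmscpushmap} to identify the relevant automorphism kernels.

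Finally I would address the monoidal and symmetry compatibilities. The tensor product on $\RepGrpd{K}$ is the external product, and for essentially finite groupoids parallel sections of an external product of finite-rank bundles factor as $\Par(\varrho\otimes\xi)\cong\Par\varrho\otimes\Par\xi$ via the Holonomy principle~\ref{satzholonomyprinciplemsc}; naturality of push and pull maps with respect to products (again~\ref{pullbackvbgrpdmsce}, \ref{satzmsceigpushmapsb}) then upgrades $\Par$ to a monoidal functor, while the symmetry constraint of $\RepGrpd{K}$ is inherited fibrewise from $\FinVect_K$, so all coherence isomorphisms reduce to those of $\Vect_K$ and hold automatically.
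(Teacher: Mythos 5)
Your proposal is correct, but it takes a genuinely different route from the paper: the paper offers no proof of this theorem at all, presenting it as a rephrasing of \cite[Theorem~5.4]{trova} in terms of the pushforward maps of Definition~\ref{defpushmap}, so the entire burden is carried by the reference's general machinery. You instead sketch a direct, self-contained verification, and the skeleton is sound. In particular, your reduction of functoriality to the single base-change identity $p'_*\circ\varrho_1(\eta)_*\circ p^* = {r_1'}^*\circ r_{1*}$ on $\Par r_1^*\varrho_1$ is exactly right: after splitting $(r_2'p')_*$ and $(r_0p)^*$ via the composition laws and commuting $({p'}^*\lambda')_*$ past $p'_*$ and $(p^*\lambda)_*$ past $p^*$ via the two naturality squares, only this Beck--Chevalley square remains. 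That identity does hold, and slightly more cleanly than your Fubini-style cardinality comparison suggests: the assignment $\bigl((x,y'',\zeta),h\bigr)\mapsto\bigl(x,\,r_1'(h)\circ\zeta\bigr)$ is an equivalence of groupoids ${p'}^{-1}[y']\to r_1^{-1}[r_1'(y')]$ under which the invariant integrand $\varrho_1(g)s(x)$ pulls back on the nose, so the two sides agree by the transformation formula (Proposition~\ref{satzmsctransformationformula}) with no separate bookkeeping of automorphism orders. What your approach buys is transparency: it isolates where homotopy pullbacks, the intertwiner $\varrho_1(\eta)$, and the characteristic-zero hypothesis (invertibility of $|\!\Aut|$ in the pushforward weights and in the splitting $\Par(\varrho\boxtimes\xi)\cong\Par\varrho\otimes\Par\xi$ of invariants) actually enter; what the paper's citation buys is brevity and the well-definedness/coherence checks already carried out in \cite{trova}. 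The only points still at the level of a sketch in your write-up are the well-definedness on equivalence classes of spans and the naturality of the monoidal structure maps with respect to span morphisms, both of which are routine given your toolkit.
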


\begin{remarks}\label{bmkmscofkexplizit}
\item For concrete computations in subsequent sections note we write out the above definition. The parallel section functor yields for a parallel section $s$ of $\varrho_0$ and $x_1 \in \Gamma_1$
\begin{align} (\Par (\Lambda,\lambda)s)(x_1) = \int_{r_1^{-1}[x_1]} \varrho_1(g) \lambda_y s(r_0(y))\,\td (y,g).\end{align} Using Corollary~\ref{kormscpushmap} we can also write
\begin{align} (\Par (\Lambda,\lambda)s)(x_1) = \sum_{\substack{[y]\in \pi_0(\Lambda) \\ r_1(y) \stackrel{h}{\cong} x_1}} \sum_{g\in \Aut(x_1)} \frac{\varrho(g)\varrho(h) s(r_0(y))}{|\! \Aut(y)|},\end{align} where the choice of $h$ is irrelevant and we do not sum over $h$. \label{bmkmscofkexplizit1}

\item In the following situation a strictification of \ref{bmkmscofkexplizit1} is possible: Let us assume that $r_1: \Lambda \to \Gamma_1$ is an isofibration (Remark~\ref{bmkmscrestrictionofcoverings}),
then for $y\in \Lambda$ with $r_1(y)\cong x_1$ we can find $y' \in \Lambda$ with $r_1(y')=x_1$, i.e.\ we can turn equalities holding up to isomorphism into strict ones. Using the identity as morphism $r_1(y') \to x_1$ we find in this case the strict version of the formula from \ref{bmkmscofkexplizit1}
\begin{align} (\Par (\Lambda,\lambda)s)(x_1) = \sum_{\substack{[y]\in \pi_0(\Lambda) \\ r_1(y) = x_1}} \sum_{g\in \Aut(x_1)} \frac{\varrho(g)s(r_0(y))}{|\! \Aut(y)|},\end{align} where $r_1(y)=x_1$ means that the representatives are chosen such that this equality holds, but the result does not depend on the representatives. \label{bmkmscofkexplizit2}

\item The isofibration property is always fulfilled in the cases we are interested in because for $G$-equivariant topological field theories all relevant functors between groupoids will be restriction functors
\begin{align}
\iota^* : \Pi(M,BG) \to \Pi(\Sigma,BG)
\end{align} for the inclusion $\iota : \Sigma \to M$ of a collection of boundary components of some oriented compact manifold $M$ with boundary. Since $\iota$ is a cofibration, we can extend homotopies of maps $\Sigma \to BG$ to homotopies of maps $M \to BG$. This implies that $\iota^*$ automatically has the needed lifting property.\label{bmkmscofkexplizit3} 

\end{remarks}

\section{The pushforward and the orbifold construction\label{secorbifoldconstruction}}
We have now established all the ingredients of the pushforward operation. For the rest of the article we will denote by $K$ a field of characteristic zero.

\subsection{Definition and properties}
We will define the pushforward of a $G$-equivariant topological field theory along an arbitrary morphism $\lambda : G \to H$ of finite groups. 

\begin{definition}[Pushforward for equivariant topological field theories]\label{defpushforward}
Let \begin{align} Z: G\text{-}\Cob(n) \to \Vect_K\end{align} be a $G$-equivariant topological field theory and $\lambda : G \to H$ a morphism of finite groups. Then the \emph{pushforward $\lambda_* Z$ of $Z$ along $\lambda$} is defined as the concatenation of symmetric monoidal functors
\begin{align}
\lambda_* : H\text{-}\Cob(n) \stackrel{\widehat{Z}^\lambda}{\to} \RepGrpd{K} \xrightarrow{\Par} \Vect_K,
\end{align} where $\widehat{Z}^\lambda : H\text{-}\Cob(n) \to \RepGrpd{K}$ was defined in Section~\ref{secchangetoequivcoeff} and $\Par$ is the parallel section functor (Theorem~\ref{thmmscparsymfunctor}).  \end{definition}

Unpacking the definition of the change to equivariant coefficients and the parallel section functor (use Remark~\ref{bmkmscofkexplizit}) yields the following explicit description of the pushforward operation:

\begin{proposition}[]\label{satzformaluepushoptft}
	For a morphism $\lambda : G \to H$ and a $G$-equivariant topological field theory $Z : G\text{-}\Cob(n) \to \Vect_K$ the pushforward theory $\lambda_* Z : H\text{-}\Cob(n) \to \Vect_K$ admits the following description:
	\begin{myenumerate} \item On an object $(\Sigma,\varphi)$ in $H\text{-}\Cob(n)$ the pushforward theory $\lambda_* Z$ is given by	\begin{align}
		(	\lambda_* Z)(\Sigma,\varphi) = \Par (\Phi^{-1}[\varphi] \to \Pi(\Sigma,BG) \xrightarrow{\varrho_\Sigma}  \Vect_K).
		\end{align} \label{satzformaluepushoptfta}
		\item On a morphism $(M,\psi) : (\Sigma_0,\varphi_0) \to (\Sigma_1,\varphi_1)$ in $H\text{-}\Cob(n)$ the pushforward theory $\lambda_* Z$ is given by	\begin{align}
		(\lambda_* Z)(M,\psi)s(\alpha_1,a_1) &= \int_{r_1^{-1}[\alpha_1,a_1]} Z(\Sigma_1 \times [0,1],g) Z(M,\beta) s(\beta|_{\Sigma_0},b|_{\Sigma_0}) \,\td (\beta,b,g)\\ \myforall &s \in (\lambda_* Z)(\Sigma_0,\varphi_0), \quad (\alpha_1,a_1) \in \lambda_*^{-1}[\varphi_1] .
		\end{align} In the particular case $\Sigma_0 = \Sigma_1 = \Sigma$ and $M=\Sigma \times [0,1]$, i.e.\ if $\psi$ is a homotopy $\varphi_0 \stackrel{h}{\simeq} \varphi_1$, this reduces to
		\begin{align} (\lambda_* Z)(M,h)s(\alpha_1,a_1) = s(\alpha_1,h^- * a_1), \end{align} where $*$ denotes the composition of homotopies.\label{satzformaluepushoptftb} 
		\item The pushforward theory $\lambda_* Z$ assigns to an $n$-dimensional closed oriented manifold $M$ and a map $\psi : M \to BH$ the invariant
		\begin{align} (\lambda_* Z)(M,\psi) = \int_{\lambda_*^{-1}[\psi]} Z(M,\phi) \,\td (\phi,h) \end{align}\label{satzformaluepushoptftc} which can be expressed as an integral with respect to groupoid cardinality.  \end{myenumerate}
\end{proposition}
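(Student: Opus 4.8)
The plan is to prove all three parts by literally unwinding the composite $\Par\circ\widehat{Z}^\lambda$, which by Definition~\ref{defpushforward} equals $\lambda_* Z$, and then matching the resulting expressions against the explicit object- and morphism-formulas for the parallel section functor collected in Remark~\ref{bmkmscofkexplizit}. Part \ref{satzformaluepushoptfta} requires nothing beyond substitution: by the construction in Section~\ref{secchangetoequivcoeff} one has $\widehat{Z}^\lambda(\Sigma,\varphi)=(\lambda_*^{-1}[\varphi],q^*\varrho_\Sigma)$ with $q:\lambda_*^{-1}[\varphi]\to\Pi(\Sigma,BG)$ the projection of the homotopy fiber, and applying $\Par$ on objects (Theorem~\ref{thmmscparsymfunctor}) returns $\Par(q^*\varrho_\Sigma)$, which is exactly the composite $\lambda_*^{-1}[\varphi]\xrightarrow{q}\Pi(\Sigma,BG)\xrightarrow{\varrho_\Sigma}\Vect_K$ appearing in the statement.

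For the general case of \ref{satzformaluepushoptftb}, recall that $\widehat{Z}^\lambda(M,\psi)$ is the morphism of $\RepGrpd{K}$ carried by the span $\lambda_*^{-1}[\varphi_0]\xleftarrow{r_0}\lambda_*^{-1}[\psi]\xrightarrow{r_1}\lambda_*^{-1}[\varphi_1]$ together with the intertwiner $q^*Z(M,?)$, whose well-definedness is exactly the content of Proposition~\ref{satzfreehtpmaponbordmmsc}. I would feed this datum into the explicit morphism formula of Remark~\ref{bmkmscofkexplizit}, \ref{bmkmscofkexplizit1}, making the following identifications: an object of $\lambda_*^{-1}[\psi]$ is a pair $(\beta,b)$ with $\beta:M\to BG$ and $b$ a witness $\psi\cong\lambda_*\beta$; the intertwiner evaluated at $(\beta,b)$ equals $Z(M,\beta)$; its restriction $r_0(\beta,b)$ equals $(\beta|_{\Sigma_0},b|_{\Sigma_0})$; and for an isomorphism $g$ in the homotopy fiber $r_1^{-1}[\alpha_1,a_1]$ the parallel transport $\varrho_1(g)$ equals $Z(\Sigma_1\times[0,1],g)$ by Proposition~\ref{satzmscreprpigrpd}. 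Substituting these yields precisely the integral asserted in \ref{satzformaluepushoptftb}.

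The reduction in the cylinder case is the one point that does not come from mere substitution, and I regard it as the main obstacle. Rather than collapse the integral by hand, I would use that $\lambda_* Z$ is itself a $\Vect_K$-valued homotopy quantum field theory with target $BH$ (Theorem~\ref{thmchangetoequivcoeff} together with $\Par$ being symmetric monoidal), so that Proposition~\ref{satzmscreprpigrpd} applies to $\lambda_* Z$ directly. Its value on $(\Sigma\times[0,1],h)$ for a homotopy $\varphi_0\stackrel{h}{\simeq}\varphi_1$ is then the parallel transport of the representation of $\Pi(\Sigma,BH)$ that $\lambda_* Z$ induces; since $h$ produces, via the universal property of the homotopy pullback, an equivalence $\lambda_*^{-1}[\varphi_1]\to\lambda_*^{-1}[\varphi_0]$ over $\Pi(\Sigma,BG)$ that reparametrizes the witness of $(\alpha_1,a_1)$ to $(\alpha_1,h^-*a_1)$, the induced map on parallel sections is precomposition with this equivalence, giving $s(\alpha_1,h^-*a_1)$. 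Getting the direction of the witness composition right is the only delicate piece of bookkeeping.

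Finally, \ref{satzformaluepushoptftc} is the specialization of \ref{satzformaluepushoptftb} to $\Sigma_0=\Sigma_1=\emptyset$. Then $\lambda_*^{-1}[\varphi_0]=\lambda_*^{-1}[\varphi_1]=\star$, the spaces of parallel sections reduce to scalars, the restriction legs become trivial, and the automorphism $g$ drops out, so the integral of \ref{satzformaluepushoptftb} collapses to $\int_{\lambda_*^{-1}[\psi]}Z(M,\phi)\,\td(\phi,h)$, which is the claimed invariant expressed with respect to groupoid cardinality.
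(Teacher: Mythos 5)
Your proposal is correct and follows essentially the same route as the paper, which likewise obtains all three formulas by unpacking the definition of $\widehat{Z}^\lambda$ from Section~\ref{secchangetoequivcoeff} and the explicit form of the parallel section functor in Remark~\ref{bmkmscofkexplizit}; your treatment of the cylinder case and the specialization to $\Sigma_0=\Sigma_1=\emptyset$ supply the same bookkeeping the paper leaves implicit.
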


\noindent We obtain the orbifold construction as the following special case of Definition~\ref{defpushforward}: 

\begin{definition}[Orbifold theory of an equivariant topological field theory]
	For a finite group $G$ the \emph{orbifold theory} $Z/G$ of a $G$-equivariant topological field theory $Z$ is defined to be the pushforward of $Z$ to the trivial group.
	\end{definition}

This finishes the construction of an orbifold theory $Z/G: \Cob(n) \to \Vect_K$ for a $G$-equivariant theory $Z: G\text{-}\Cob(n) \to \Vect_K$. We now give a very explicit description of the orbifold theory and derive its most important properties. To this end, we combine Proposition~\ref{satzformaluepushoptft} with the Remarks~\ref{bmkmscofkexplizit}:

\spaceplease

\begin{corollary}[]\label{korofkexplicit}
For a $G$-equivariant topological field theory $Z: G\text{-}\Cob(n) \to \Vect_K$ the orbifold theory
$Z/G : \Cob(n) \to \Vect_K$ admits the following description:
\begin{myenumerate} \item On an object $\Sigma$ in $\Cob(n)$ the orbifold theory $Z/G$ is given by the vector space $\Par \varrho_\Sigma$ of parallel sections of the vector bundle $\varrho_\Sigma$ over $\Pi(\Sigma,BG)$ given by $Z$ and Proposition~\ref{satzmscreprpigrpd}, i.e.\
$Z/G(\Sigma) = \Par \varrho_\Sigma$.
Explicitly,
\begin{align}
\frac{Z}{G}(\Sigma) \cong \bigoplus_{[\varphi]\in [\Sigma,BG]} Z(\Sigma,\varphi)^{\Aut(\varphi)} \cong \bigoplus_{[\varphi]\in [\Sigma,BG]} Z(\Sigma,\varphi)_{\Aut(\varphi)}.
\end{align} \label{korofkexplicita}

\item On a morphism $M : \Sigma_0 \to \Sigma_1$ in $\Cob(n)$ the orbifold theory $Z/G$ is given by	
\begin{align}\left(\frac{Z}{G}(M)s\right)(\varphi_1) &= \int_{r_1^{-1}[\varphi_1]} Z((\Sigma_1 \times [0,1]) \circ M, h\cup \psi) s(\psi|_{\Sigma_0}) \,\td (\psi,h) \\ 
\end{align} holds. 
This expresses $Z/G(M)$ as an integral with respect to groupoid cardinality.
Here $r_1 : \Pi(M,BG) \to \Pi(\Sigma_1,BG)$ is the restriction functor, $h\cup \psi : (\Sigma_1 \times [0,1]) \circ M \to BG$ is the function on $M$ with a cylinder glued to it obtained from $\psi : M \to BG$ and a homotopy $\psi|_{\Sigma_1} \stackrel{h}{\simeq} \varphi_1$. As an alternative we can use the formula
\begin{align}
\left(\frac{Z}{G}(M)s\right)(\varphi_1) = \sum_{\substack{[\psi]\in[M,BG] \\ \psi|_{\Sigma_1} = \varphi_1}} \sum_{\varphi_1 \stackrel{h}{\simeq} \varphi_1 \in \Aut(\varphi_1)} \frac{ Z((\Sigma_1 \times [0,1]) \circ M, h\cup \psi ) s(\psi|_{\Sigma_0})}{|\! \Aut(\psi)|},
\end{align} where for the first sum the representatives are chosen such that $\psi|_{\Sigma_1} = \varphi_1$ holds strictly, but the result is independent of the representatives. \label{korofkexplicitb}

\item The orbifold theory $Z/G$ assigns to an $n$-dimensional closed oriented manifold $M$ the invariant
\begin{align} 
\frac{Z}{G}(M) = \int_{\Pi(M,BG)} Z(M,\psi)\,\td \psi
\end{align} \label{satzformaluepushoptftc} which can be expressed as an integral with respect to groupoid cardinality.  \label{korofkexplicitc}\end{myenumerate}
\end{corollary}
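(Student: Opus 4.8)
The plan is to obtain all three formulae by specializing the general pushforward description of Proposition~\ref{satzformaluepushoptft} to the case $H=1$, $\lambda:G\to 1$, and then rewriting the resulting expressions with the help of the Remarks~\ref{bmkmscofkexplizit} and the holonomy principle. The first observation I would record is that $\Pi(\Sigma,B1)$ and $\Pi(M,B1)$ are equivalent to the trivial groupoid $\star$: there is only the trivial bundle and no nontrivial automorphisms. Consequently the homotopy fibers indexing the change-to-equivariant-coefficients construction collapse, $\lambda_*^{-1}[\varphi]\simeq \Pi(\Sigma,BG)$ for the unique object $\varphi$ over a surface $\Sigma$ and $\lambda_*^{-1}[\psi]\simeq \Pi(M,BG)$ for the unique map on a bordism $M$, with all auxiliary isomorphism data (the pairs $(\alpha,a)$ and the integration variable $b$ appearing in Proposition~\ref{satzformaluepushoptft}) carrying no information and dropping out.

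For part \ref{korofkexplicita}, plugging this into Proposition~\ref{satzformaluepushoptft}, \ref{satzformaluepushoptfta} immediately gives $Z/G(\Sigma)=\Par\varrho_\Sigma$, the space of parallel sections of the vector bundle $\varrho_\Sigma$ over $\Pi(\Sigma,BG)$ supplied by Proposition~\ref{satzmscreprpigrpd}. To reach the explicit decomposition I would invoke the holonomy principle (Proposition~\ref{satzholonomyprinciplemsc}), which identifies $\Par\varrho_\Sigma$ with $\lim\varrho_\Sigma$, and then the first formula in \eqref{eqncolimlim}. Because $G$ is finite and $\Sigma$ compact, $\Pi(\Sigma,BG)$ is essentially finite, and since $K$ has characteristic zero the canonical map $\lim\varrho_\Sigma\to\colim\varrho_\Sigma$ is an isomorphism; this is exactly what produces both the invariants and the coinvariants descriptions, indexed by $\pi_0(\Pi(\Sigma,BG))=[\Sigma,BG]$.

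For parts \ref{korofkexplicitb} and \ref{korofkexplicitc} I would specialize Proposition~\ref{satzformaluepushoptft}, \ref{satzformaluepushoptftb} and \ref{satzformaluepushoptftc}. In \ref{korofkexplicitb} the general integrand $Z(\Sigma_1\times[0,1],g)\,Z(M,\beta)$ is rewritten, using the functoriality of the symmetric monoidal functor $Z$, as $Z\bigl((\Sigma_1\times[0,1])\circ M,\,h\cup\psi\bigr)$, the value of $Z$ on $M$ with a mapping cylinder glued on; after discarding the trivial $H$-data (and relabeling $\beta\to\psi$, $g\to h$) the integration runs over $(\psi,h)\in r_1^{-1}[\varphi_1]$, which is precisely the stated first formula. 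The alternative, strictified sum then follows from Remark~\ref{bmkmscofkexplizit}, \ref{bmkmscofkexplizit2}: the restriction functor $r_1:\Pi(M,BG)\to\Pi(\Sigma_1,BG)$ is an isofibration because it is pullback along the cofibration $\Sigma_1\hookrightarrow M$ (Remark~\ref{bmkmscofkexplizit}, \ref{bmkmscofkexplizit3}), so the homotopy fiber may be replaced by its strict fiber with representatives chosen so that $\psi|_{\Sigma_1}=\varphi_1$ holds strictly. Part \ref{korofkexplicitc} is the most direct: specializing with $H=1$ turns $\lambda_*^{-1}[\psi]$ into $\Pi(M,BG)$ and yields $Z/G(M)=\int_{\Pi(M,BG)}Z(M,\psi)\,\td\psi$.

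The remaining work is bookkeeping rather than conceptual, so I expect no serious obstacle; the one step that genuinely requires care is the identification of invariants with coinvariants in part \ref{korofkexplicita}, where the essential finiteness of $\Pi(\Sigma,BG)$ and the characteristic-zero hypothesis on $K$ are indispensable, since without them the canonical map $\lim\varrho_\Sigma\to\colim\varrho_\Sigma$ need not be an isomorphism and the groupoid-cardinality integrals underlying parts \ref{korofkexplicitb} and \ref{korofkexplicitc} would not even be defined.
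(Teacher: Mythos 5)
Your proposal is correct and follows exactly the route the paper indicates: it specializes Proposition~\ref{satzformaluepushoptft} to the morphism $G\to 1$ (collapsing the trivial $H$-data), combines the result with the Remarks~\ref{bmkmscofkexplizit} for the strictified sum, and uses the holonomy principle together with \eqref{eqncolimlim} for the invariants/coinvariants decomposition in part \ref{korofkexplicita}. You correctly flag the essential finiteness of $\Pi(\Sigma,BG)$ and the characteristic-zero hypothesis as the points where care is needed; no gaps.
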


\spaceplease

\noindent The formulae are compatible with the idea of integrating over twisted sectors. \\[2ex]\begin{remark}[Twisted version of the orbifold construction]\label{extwisteddworbmsc}
Let $G$ be a finite group and $BG_\theta$ be the $n$-dimensional primitive $G$-equivariant theory associated to a cocycle $\theta \in Z^n(BG;\operatorname{U}(1))$ as defined in \cite[I.2.1]{turaevhqft}. 
Working over the field of complex numbers, the functor
\begin{align} \HSym(G\text{-}\Cob(n),\Vect_\mathbb{C}) \xrightarrow{?\otimes BG_\theta} \HSym(G\text{-}\Cob(n),\Vect_\mathbb{C})  \xrightarrow{?/G} \Sym(\Cob(n),\Vect_\mathbb{C}) \end{align} first takes the tensor product of a given equivariant topological field theory with the primitive theory associated to $\theta$ and then orbifoldizes. We call this functor the \emph{$\theta$-twisted orbifold construction}. 

Let us make the following observations:
\begin{xenumerate}
\item If we apply the $\theta$-twisted orbifold construction to the trivial $G$-equivariant theory, which assigns identities between complex lines to all morphisms with maps into $BG$, then we obtain the orbifold theory $BG_\theta / G$ of $BG_\theta$. The ordinary topological field theory $BG_\theta / G$ is commonly referred to as \emph{$\theta$-twisted Dijkgraaf-Witten theory} (see e.g. \cite{freedquinn,morton1}). It assigns to a closed oriented $n$-dimensional manifold $M$ the number
\begin{align}
\frac{BG_\theta }{ G} (M) = \int_{\Pi(M,BG)} \spr{\psi^* \theta,\mu_M}\,\td \psi,
\end{align}  where $\mu_M \in H_n(M)$ is the fundamental class of $M$. For an object $\Sigma$ in $\Cob(n)$ we obtain the vector space
\begin{align} \frac{BG_\theta}{G}(\Sigma) \cong \bigoplus_{[\varphi] \in [\Sigma,BG]} BG_\theta(\Sigma,\varphi)^{\Aut(\varphi)}.\end{align} \label{extwisteddworbmsc1}
\item If we denote by $U:G\text{-}\Cob(n) \to \Cob(n)$ the forgetful functor, then \begin{align} \frac{?}{G} \circ U^* \in \End(\Sym(\Cob(n),\Vect_\mathbb{C}))\end{align} assigns to the trivial $n$-dimensional topological field theory the Dijkgraaf-Witten theory for the group $G$ as follows from \ref{extwisteddworbmsc1} in the case $\theta =1$. For general $G$ this implies that $(?/G) \circ U^*$ is not naturally isomorphic to the identity. So $?/G$ cannot be an adjoint to the pullback along $U$ because a pair of adjoint functors between groupoids always form an equivalence. 
\end{xenumerate}
\end{remark}

\begin{example}[One-dimensional case]
	A one-dimensional $G$-equivariant topological field theory is equivalently a finite-dimensional $G$-representation by \cite[I.1.4]{turaevhqft}. Orbifoldization then simply amounts to taking (co)invariants of this representation. 
\end{example}

\begin{example}[Equivariant Dijkgraaf-Witten models]\label{orbifolddwmodels}
	In \cite{maiernikolausschweigerteq} for any short exact sequence  \begin{align} 0 \to G \to H \stackrel{\lambda}{\to} J \to 0\end{align} of finite groups a $J$-equivariant topological field theory $Z_\lambda$ with values in complex Hilbert spaces is constructed, the so-called \emph{$J$-equivariant Dijkgraaf-Witten theory}. The number it assigns to a top-dimensional closed oriented manifold $M$ and a map $\psi : M \to BJ$ is given by the groupoid cardinality $|\lambda_* ^{-1}[\psi]|$ of the homotopy fiber of the functor $\lambda_* : \Pi(M,BH) \to \Pi(M,BJ)$ over $\psi$. 
	By Corollary~\ref{korofkexplicit}, \ref{korofkexplicitc} the orbifold theory satisfies
	\begin{align}
	\frac{Z_\lambda}{ J}(M) = \int_{\Pi(M,BJ)}  |\lambda^{-1}_*[\psi]| \,\td \psi = |\Pi(M,BH)|,
	\end{align} where Cavalieri's principle (Proposition~\ref{satzmsccavalierisprinciple}) enters in the last step. 
	On the right hand side we see the invariant the 
	Dijkgraaf-Witten theory $Z_H$ for the group $H$ would assign to $M$. In fact, one can show by a tedious, but direct computation \cite{woike} that the orbifold theory of $Z_\lambda$ is given by $Z_H$. 
	\end{example}

\subsection{Dimension two: Orbifoldization of crossed Frobenius algebras\label{secmscofk2d}}
For two-dimensional equivariant topological field theories we can write down the orbifold construction on the level of classifying objects, i.e.\ by using the classification of $G$-equivariant topological field theories by crossed Frobenius $G$-algebras due to \cite{turaevhqft} as recalled in Example~\ref{exGcrossedproof}. 
The orbifold theory is an ordinary two-dimensional topological field theory and hence equivalent to a commutative Frobenius algebra. Our goal in this subsection is to determine this Frobenius algebra.  

\begin{definition}[Parallel sections of a crossed Frobenius $G$-algebra]\label{defmscparallelsectionfrob}
Let $G$ be a finite group and $\mathfrak{A}$ be a crossed Frobenius $G$-algebra.
A \emph{parallel section} of $\mathfrak{A}$ is a parallel section of the underlying vector bundle over $G//G$, i.e.\ a family $s=(s(g))_{g\in G}$ of vectors $s(g) \in \mathfrak{A}_g$ with $s(hgh^{-1})=h.s(g)$ for all $g,h\in G$. We denote the \emph{vector space of parallel sections} of $\mathfrak{A}$ by $\mathfrak{A}/G$. 
\end{definition}

\spaceplease
\begin{theorem}[Orbifold construction for two-dimensional equivariant topological field theories]\label{thmorbifoldconstructionfor2dtft}
Let $G$ be a finite group, $Z:G\text{-}\Cob(2) \to \Vect_K$ a two-dimensional $G$-equivariant topological field theory and $\mathfrak{A}$ its crossed Frobenius $G$-algebra. Then the orbifold theory $Z/G: G\text{-}\Cob(2) \to \Vect_K$ is classified by the commutative Frobenius algebra which, as a vector space, is given by the vector space of parallel section $\mathfrak{A}/G$ of $\mathfrak{A}$. The multiplication is given by
\begin{align}
(ss')(g) = \sum_{\substack{a,b\in G \\ ab=g}} s(a)s'(b) \myforall s,s'\in \frac{\mathfrak{A}}{G}, \quad g\in G,
\end{align} the unit is the parallel section with $s(1)=1$ and $s(g) =0$ for $g\neq 1$, and the pairing is given by 
\begin{align}
\kappa(s,s') = \frac{1}{|G|} \sum_{g\in G} \kappa(s(g),s(g^{-1})) \myforall s,s'\in \frac{\mathfrak{A}}{G}.
\end{align}
The Frobenius algebra $\mathfrak{A}/G$ is called the orbifold Frobenius algebra of $\mathfrak{A}$.
\end{theorem}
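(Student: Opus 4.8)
The plan is to exploit the classification of two-dimensional topological field theories by commutative Frobenius algebras (\cite[Theorem~3.6.19]{kock}): since the orbifold theory $Z/G:\Cob(2)\to\Vect_K$ is an ordinary $\Vect_K$-valued two-dimensional topological field theory, it is automatically classified by a commutative Frobenius algebra, and all Frobenius axioms (associativity, unitality, commutativity, and non-degeneracy of the pairing) hold for free. It therefore suffices to \emph{identify} the generating data by evaluating $Z/G$ on the circle, the pair of pants, the cap, and the bent cylinder, and this is where Corollary~\ref{korofkexplicit} does the work. The underlying vector space is immediate: by Corollary~\ref{korofkexplicit},~\ref{korofkexplicita} we have $Z/G(\sphere^1)=\Par\varrho_{\sphere^1}$, and under the identification $\Pi(\sphere^1,BG)\cong G//G$ recalled in Example~\ref{exGcrossedproof} the bundle $\varrho_{\sphere^1}$ is exactly the underlying bundle $\varrho_\mathfrak{A}$, so $Z/G(\sphere^1)=\mathfrak{A}/G$ in the sense of Definition~\ref{defmscparallelsectionfrob}.

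For the multiplication I would evaluate $Z/G$ on the pair of pants $P$. Its mapping groupoid $\Pi(P,BG)$ is equivalent to $(G\times G)//G$ under simultaneous conjugation, an object being recorded by the holonomies $(a,b)$ of the two incoming circles and outgoing holonomy $ab$; the restriction functors send $(a,b)$ to $(a,b)$ and to $ab$ respectively. Feeding this into the strict formula of Corollary~\ref{korofkexplicit},~\ref{korofkexplicitb}, which is legitimate because the restriction functors are isofibrations (Remark~\ref{bmkmscofkexplizit}), and using that $Z$ on the pair of pants is the crossed-algebra multiplication $\mu:\mathfrak{A}_a\otimes\mathfrak{A}_b\to\mathfrak{A}_{ab}$, the value at $g$ becomes a weighted sum over pairs $(a,b)$ with $ab=g$ and over automorphisms $h\in\Aut(g)=C_G(g)$ of $\varrho_\mathfrak{A}(h)\bigl(s(a)s'(b)\bigr)$. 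The $G$-equivariance of $\mu$ together with the parallel-section identity $h.s(a)=s(hah^{-1})$ turns the inner sum over $C_G(g)$ into $|C_G(a)\cap C_G(b)|$ copies of a sum over the conjugation orbit of $(a,b)$; this cancels the automorphism weight $|\Aut(\psi)|=|C_G(a)\cap C_G(b)|$ and collapses everything to $(ss')(g)=\sum_{ab=g}s(a)s'(b)$.

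The unit and pairing are handled analogously. For the unit I evaluate on the cap $D$: here $\Pi(D,BG)\cong\star//G$ carries only the trivial bundle, the restriction functor sends it to $1\in G//G$, and a short computation of the homotopy fiber $r_1^{-1}[1]$ shows it is equivalent to the point groupoid with trivial automorphisms. Since $Z$ on $D$ produces the unit $1\in\mathfrak{A}_1$ and $\varrho_\mathfrak{A}$ fixes it, the pushforward yields the section supported at $g=1$ with value $1$. For the pairing I evaluate on the bent cylinder, whose mapping groupoid is $G//G$ with holonomy $g$ restricting to $(g,g^{-1})$ on the two incoming circles; Corollary~\ref{korofkexplicit} then gives $\kappa(s,s')=\sum_{[g]\in\pi_0(G//G)}\kappa_\mathfrak{A}(s(g),s'(g^{-1}))/|C_G(g)|$, where $\kappa_\mathfrak{A}$ is the crossed pairing, non-zero only between $\mathfrak{A}_g$ and $\mathfrak{A}_{g^{-1}}$. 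Because $G$-invariance of $\kappa_\mathfrak{A}$ and the parallel-section property make $g\mapsto\kappa_\mathfrak{A}(s(g),s'(g^{-1}))$ a class function, the identity $\sum_{[g]}f(g)/|C_G(g)|=\tfrac{1}{|G|}\sum_{g\in G}f(g)$ converts this into the stated normalized sum.

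I expect the main obstacle to be precisely this groupoid-cardinality bookkeeping: verifying that the automorphism (centralizer) weights in the pushforward cancel correctly against the sums over $\Aut(\varphi_1)$ in the multiplication, and that the pairing normalization produces exactly the factor $\tfrac{1}{|G|}$ via the conversion between summation over conjugacy classes weighted by inverse centralizer orders and the uniform average over $G$. Everything else — the identification of the relevant mapping groupoids and restriction functors, and the fact that the resulting data automatically form a commutative Frobenius algebra — is either routine or supplied by Kock's classification.
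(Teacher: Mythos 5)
Your proposal is correct and follows essentially the same route as the paper: invoke Kock's classification so that only the generating data need identifying, read off the vector space from Corollary~\ref{korofkexplicit}, compute the multiplication from the span $G//G\times G//G\longleftarrow (G\times G)//G\to G//G$ attached to the pair of pants, and use the orbit--stabilizer theorem to cancel the weights $|\!\Aut(a,b)|=|C_G(a)\cap C_G(b)|$ against the sum over $\Aut(g)$. Your treatment of the unit and of the pairing (via the class-function identity $\sum_{[g]}f(g)/|C_G(g)|=\tfrac{1}{|G|}\sum_{g\in G}f(g)$) is in fact slightly more explicit than the paper, which only remarks that the pairing is derived in a similar way.
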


\begin{remarks}

\item To reduce notational complexity we refrain from introducing additional symbols for the multiplication, pairing etc. of the orbifold algebra. 

\item The term orbifold algebra is not only justified by the above assertion, but also used in the literature: By \cite[Proposition~2.1.3]{kaufmannorb} the invariants of a crossed Frobenius $G$-algebra naturally form a Frobenius algebra. This is exactly the idea underlying Definition~\ref{defmscparallelsectionfrob} because the holonomy principle allows us to identify invariants with parallel sections. In fact, on the level of vector spaces we have the non-canonical isomorphism
\begin{align}
\frac{\mathfrak{A}}{G} \cong \bigoplus_{[g] \in G/G} \mathfrak{A}_g^{\Aut(g)}.
\end{align} In summary, the orbifold construction is designed in such a way that it relates in dimension two to known orbifold constructions for Frobenius algebras. 

\end{remarks}

\begin{proof}
We know that the orbifold theory is a two-dimensional topological field theory. By the classification result for two-dimensional topological field theories it can be equivalently described by the commutative Frobenius algebra obtained by evaluation on the circle. Since the orbifold theory on objects is given by forming spaces of parallel sections, we deduce that this commutative Frobenius algebra, as a vector space, is $\mathfrak{A}/G$. The multiplication is obtained by evaluation on the pair of pants. In Example~\ref{exGcrossedproof} we have seen that the application of the stack of $G$-bundles to this bordism yields the span
\begin{align} G//G \times G//G \stackrel{B}{\longleftarrow} (G\times G)//G \stackrel{M}{\to} G//G, \end{align} where $B$ is the obvious functor and $M$ the multiplication. Using the explicit formula for the orbifold construction (Corollary~\ref{korofkexplicit}, \ref{korofkexplicitb}) we find
\begin{align}
(ss')(g) = \sum_{\substack{[a,b] \in (G\times G)//G \\ ab=g \\ h \in \Aut(g)}} \frac{h.(s(a)s'(b))}{|\! \Aut(a,b)|} \myforall s,s'\in \frac{\mathfrak{A}}{G}, \quad g\in G,
\end{align} where the representatives are chosen such that $ab=g$ holds strictly. Now denote by $\Gamma_g$ the full subgroupoid of $(G\times G)//G$ of all $(a,b)$ with $ab=g$ (in fact, this is equivalent to the full subgroupoid of all $(a,b)$ with $ab=cgc^{-1}$ for some $c\in G$), define
\begin{align}
f(s,s',g)(a,b) := \sum_{h\in \Aut(g)} h.(s(a)s'(b)) \myforall (a,b) \in \Gamma_g
\end{align} and observe that $f(s,s',g): \Gamma_g \to \mathfrak{A}_g$ is an invariant function on $\Gamma_g$. Having introduced this notation we obtain
\begin{align} (ss')(g) = \int_{\Gamma_g} f(s,s',g)= \sum_{[a,b] \in \Gamma_g} \frac{1}{|\! \Aut(a,b)|} \sum_{h\in \Aut(g)} s (hah^{-1}) s'(hbh^{-1}). \label{eqnmscofkdim22}\end{align} The groupoid $\Gamma_g$ is the action groupoid of the action of $\Aut(g)$ on the underlying object set by conjugation. Recalling the classical orbit theorem stating that the map
\begin{align}
\Aut(g) \ni h \mapsto (hah^{-1},hbh^{-1}) \in \mathcal{O}(a,b)
\end{align} induces a bijection $\Aut(g) / \Aut(a,b) \cong \mathcal{O}(a,b)$ we see that we can replace the inner sum by
\begin{align}
\sum_{h\in \Aut(g)} s (hah^{-1}) s'(hbh^{-1}) = |\! \Aut(a,b)| \sum_{(x,y) \in \mathcal{O}(a,b)} s(x)s'(y).
\end{align} Using this together with \eqref{eqnmscofkdim22} proves the formula
\begin{align}
(ss')(g) = \sum_{[a,b]\in \Gamma_g} \sum_{(x,y) \in \mathcal{O}(a,b)} s(x)s'(y) = \sum_{\substack{a,b\in G \\ ab=g}} s(a)s'(b)
\end{align} for the multiplication. The formula for the pairing can be derived in a similar way.   
\end{proof}

\subsection{The composition law for the pushforward operation}
For a morphism $\lambda : G \to H$ of finite groups we have introduced in Definition~\ref{defpushforward} the {pushforward of $G$-equivariant topological field theories along $\lambda$} as a functor
 \begin{align}
 \lambda_* : \HSym(G\text{-}\Cob(n),\Vect_K) \to \HSym(H\text{-}\Cob(n), \Vect_K) 
 \end{align}
where the orbifold construction corresponds to the case $H=1$. 
For morphisms $\lambda : G \to H$ and $\mu : H \to J$ of finite groups it is reasonable to ask whether the pushforward operations on equivariant topological field theories obey
\begin{align}
	(\mu \circ \lambda)_*  \cong \mu_* \circ \lambda_*.
\end{align} Since the push operations are not left-adjoint to the pullback functors, such a relation does not hold automatically. However, we will show below in Theorem~\ref{thmgluinglawpush} that this composition law holds. Let us first investigate the situation at the level of manifold invariants. For this we need the following assertion which is a consequence of the pasting law for (homotopy) pullback squares:

\begin{lemma}[]\label{lemmahtppullbackcomposition}
	For functors $\Phi : \Gamma \to \Omega$ and $\Psi : \Omega \to \Lambda$ between groupoids, there is a canonical equivalence
	\begin{align}
	(\Psi \circ \Phi)^{-1}[z] \cong \Psi^{-1}[z] \times_\Omega \Gamma.
	\end{align}
\end{lemma}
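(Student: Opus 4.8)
The plan is to recognize both sides as weak pullbacks assembled from the same data and to conclude by the pasting law, after first recording what the objects and morphisms are. Recall from Definition~\ref{defhomotopypullbackofk} that $\Psi^{-1}[z]$ is the weak pullback of $\Omega \xrightarrow{\Psi} \Lambda \xleftarrow{z} \star$: its objects are pairs $(w,\theta)$ with $w \in \Omega$ and an isomorphism $\theta : \Psi(w) \xrightarrow{\cong} z$, and it carries the projection $\pi : \Psi^{-1}[z] \to \Omega$, $(w,\theta)\mapsto w$. Unwinding the definition once more, an object of $\Psi^{-1}[z]\times_\Omega \Gamma$ is a quadruple $(w,\theta,x,\mu)$ with $(w,\theta)\in \Psi^{-1}[z]$, $x\in \Gamma$, and $\mu : w \xrightarrow{\cong} \Phi(x)$, while an object of $(\Psi\circ\Phi)^{-1}[z]$ is a pair $(x,\zeta)$ with $x\in\Gamma$ and $\zeta : \Psi(\Phi(x)) \xrightarrow{\cong} z$.

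The structural step is then to paste the two defining squares. I would consider the diagram
\begin{center}
\begin{tikzpicture}[scale=1]
\node (P) at (0,1.3) {$\Psi^{-1}[z]\times_\Omega \Gamma$};
\node (Y) at (3.4,1.3) {$\Psi^{-1}[z]$};
\node (S) at (5.4,1.3) {$\star$};
\node (G) at (0,0) {$\Gamma$};
\node (O) at (3.4,0) {$\Omega$};
\node (L) at (5.4,0) {$\Lambda$};
\path[->,font=\scriptsize]
(P) edge (Y)
(Y) edge (S)
(P) edge (G)
(Y) edge node[right]{$\pi$} (O)
(S) edge node[right]{$z$} (L)
(G) edge node[below]{$\Phi$} (O)
(O) edge node[below]{$\Psi$} (L);
\end{tikzpicture}
\end{center}
in which the right-hand square is the weak pullback square that defines $\Psi^{-1}[z]$ and the left-hand square is the weak pullback square that defines $\Psi^{-1}[z]\times_\Omega \Gamma$ along $\pi$ and $\Phi$. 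By the pasting law for weak pullbacks of groupoids, the outer rectangle then exhibits $\Psi^{-1}[z]\times_\Omega \Gamma$ as the weak pullback of $\Gamma \xrightarrow{\Psi\circ\Phi}\Lambda \xleftarrow{z}\star$, which is by definition $(\Psi\circ\Phi)^{-1}[z]$.

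As a concrete realization that also makes the equivalence canonical, I would exhibit the comparison functors by hand. Define $G : (\Psi\circ\Phi)^{-1}[z]\to \Psi^{-1}[z]\times_\Omega\Gamma$ by $(x,\zeta)\mapsto \left(\Phi(x),\zeta,x,\id_{\Phi(x)}\right)$, and $F$ in the opposite direction by $(w,\theta,x,\mu)\mapsto \left(x,\ \theta\circ\Psi(\mu)^{-1}\right)$; both are well defined by the description of objects above. One checks immediately that $F\circ G = \id$ on the nose, while the family of morphisms whose $\Omega$-component is $\mu^{-1} : \Phi(x)\to w$ and whose $\Gamma$-component is $\id_x$ provides the natural isomorphism $G\circ F \Rightarrow \id$; verifying the compatibility condition of Definition~\ref{defhomotopypullbackofk} for this family reduces to the identity $\mu\circ\mu^{-1}=\id_{\Phi(x)}$.

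The only genuine subtlety, and the main point requiring care, is that everything here is $2$-categorical: the squares commute only up to the canonical natural isomorphisms $\eta$ of Definition~\ref{defhomotopypullbackofk}, so the pasting law to be invoked is the one for weak (iso-comma) pullbacks of groupoids rather than strict ones. In the explicit approach the corresponding content is precisely the bookkeeping of these coherence isomorphisms, namely checking that the morphism-level compatibility in $\Psi^{-1}[z]\times_\Omega\Gamma$ holds and that the isomorphism $G\circ F\Rightarrow\id$ is natural; these verifications are routine but must be carried out with the coherence data made explicit.
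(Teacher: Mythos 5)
Your argument is correct and matches the paper's own treatment: the lemma is stated there as an immediate consequence of the pasting law for homotopy pullback squares, which is exactly your structural step. Your explicit comparison functors $F$ and $G$ and the natural isomorphism $G\circ F\Rightarrow\id$ are a valid (and welcome) elaboration of the same argument, and the formulas check out against Definition~\ref{defhomotopypullbackofk}.
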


\noindent If $Z : G\text{-}\Cob(n) \to \Vect_K$ is a $G$-equivariant field theory and $M$ a closed $n$-dimensional oriented manifold together with a map $\psi : M \to BJ$, we obtain
\begin{align}
	( (\mu \circ \lambda)_*  Z)(M,\psi) = \int_{(\mu \circ \lambda)_*^{-1} [\psi]} Z(M,\phi)\,\td (\phi,g)
	\end{align} by Proposition~\ref{satzformaluepushoptft}, \ref{satzformaluepushoptftc}. Using Lemma~\ref{lemmahtppullbackcomposition} and the transformation formula (Proposition~\ref{satzmsctransformationformula}) we find
\begin{align}
	( (\mu \circ \lambda)_*  Z)(M,\psi) =  \int_{\mu_*^{-1} [\psi] \times _{\Pi(M,BH)} \Pi(M,BG)} Z(M,\alpha)\,\td (\beta,h,\alpha,g).
	\end{align} Next we apply the generalized Cavalieri's principle (Proposition~\ref{satzmscgencavalierisprinciple}) to the projection functor 
	\begin{align}P : \mu_*^{-1} [\psi] \times _{\Pi(M,BH)} \Pi(M,BG) \to \mu^{-1}_* [\psi].\end{align} Since by the universal property of the homotopy pullback the fiber $P^{-1}[\beta,h]$ of $P$ over some $(\beta,h) \in \mu_*^{-1} [\psi]$ is just the fiber $\lambda_*^{-1} [\beta]$ of $\lambda_*$ over $\beta$ we find the desired relation for invariants
\begin{align}
		( (\mu \circ \lambda)_*  Z)(M,\psi) &= \int_{\mu_*^{-1} [\psi] } \int_{\lambda_*^{-1} [\beta]} Z(M,\alpha )\td (\alpha,g) \, \td (\beta,h) \\&= \int_{\mu_*^{-1} [\psi] } (\lambda_* Z)(M,\beta) \, \td (\beta,h) \\&= ((\mu_* \circ \lambda_*) Z)(M,\psi).
	\end{align}
A tedious, but direct computation based on Proposition~\ref{satzformaluepushoptft} shows that this result extends beyond invariants:

\begin{theorem}[]\label{thmgluinglawpush}
	For composable morphisms $G \stackrel{\lambda}{\to} H \stackrel{\mu}{\to} J$ of finite groups the pushforward operations on equivariant topological field theories obey
	\begin{align}
	(\mu \circ \lambda)_*  \cong \mu_* \circ \lambda_* .
	\end{align}
	\end{theorem}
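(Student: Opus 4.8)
The plan is to exhibit a \emph{monoidal natural isomorphism} between the two functors $(\mu\circ\lambda)_*$ and $\mu_*\circ\lambda_*$ from $\HSym(G\text{-}\Cob(n),\Vect_K)$ to $\HSym(J\text{-}\Cob(n),\Vect_K)$. Since by Corollary~\ref{kormsccatTtfts} the target category is a groupoid, it suffices to produce, for each $G$-equivariant theory $Z$, a monoidal natural isomorphism $(\mu\circ\lambda)_* Z \cong (\mu_*\circ\lambda_*) Z$ of symmetric monoidal functors $J\text{-}\Cob(n)\to\Vect_K$ and then to verify that it is natural in $Z$. I would compare the two functors separately on objects, on morphisms, and on the monoidal structure, using throughout the explicit formulae of Proposition~\ref{satzformaluepushoptft}. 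The conceptual engine is the pasting law for homotopy pullbacks (Lemma~\ref{lemmahtppullbackcomposition}) together with the interplay between the holonomy principle and integration over groupoids, and the whole argument is a fibrewise upgrade of the invariant-level computation already carried out in the text for closed manifolds.

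On objects, for $(\Sigma,\varphi)$ in $J\text{-}\Cob(n)$ I would first use Lemma~\ref{lemmahtppullbackcomposition} to identify the homotopy fibre $(\mu\circ\lambda)_*^{-1}[\varphi]$ of the composite stack morphism with the homotopy pullback $\mu_*^{-1}[\varphi]\times_{\Pi(\Sigma,BH)}\Pi(\Sigma,BG)$; under this equivalence the projection to $\Pi(\Sigma,BG)$ matches, so the pulled-back vector bundles $q^*\varrho_\Sigma$ agree. By the holonomy principle (Proposition~\ref{satzholonomyprinciplemsc}) the vector space assigned to $(\Sigma,\varphi)$ by $(\mu\circ\lambda)_* Z$ is the limit of $q^*\varrho_\Sigma$ over this homotopy pullback, and I would show that this limit factors as an iterated limit: a parallel section over the pullback restricts, on each fibre $P^{-1}[\beta,h]\simeq\lambda_*^{-1}[\beta]$ (by the universal property of the homotopy pullback), to a parallel section, and these assemble into a parallel section of the bundle over $\mu_*^{-1}[\varphi]$ whose fibre over $(\beta,h)$ is $(\lambda_* Z)(\Sigma,\beta)$. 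This is exactly $(\mu_*\circ\lambda_*) Z(\Sigma,\varphi)$, and the resulting isomorphism is manifestly natural in $\Sigma$ and $\varphi$.

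On morphisms the argument is the tedious core. For a bordism $(M,\psi)\colon(\Sigma_0,\varphi_0)\to(\Sigma_1,\varphi_1)$ both sides produce spans in $\RepGrpd{K}$ whose images under $\Par$ are the integral formulae of Proposition~\ref{satzformaluepushoptft},~\ref{satzformaluepushoptftb}. I would apply Lemma~\ref{lemmahtppullbackcomposition}, the transformation formula (Proposition~\ref{satzmsctransformationformula}) and the generalized Cavalieri's principle (Proposition~\ref{satzmscgencavalierisprinciple}) to the projection $P\colon\mu_*^{-1}[\psi]\times_{\Pi(M,BH)}\Pi(M,BG)\to\mu_*^{-1}[\psi]$, whose fibre over $(\beta,h)$ is $\lambda_*^{-1}[\beta]$; exactly as in the closed case this rewrites the single integral over $(\mu\circ\lambda)_*^{-1}[\psi]$ as the iterated integral computing $(\mu_*(\lambda_* Z))(M,\psi)$. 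The point requiring care is that these identifications must be the restrictions of the object isomorphisms of the previous step to the source and target, i.e.\ the relevant squares must commute; this is verified by tracking the restriction functors $r_0,r_1$ of the spans through the pasting law and invoking the composition law for pushforward maps (Proposition~\ref{satzmsceigpushmaps},~\ref{satzmsceigpushmapsa}).

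Finally, monoidality follows from the additivity of the stacks $\Pi(?,BG)$, $\Pi(?,BH)$ and $\Pi(?,BJ)$: homotopy fibres of disjoint unions split as products, and both $\Par$ and the pushforward maps respect the external products defining the tensor product of $\RepGrpd{K}$, so the object and morphism isomorphisms are compatible with $\otimes$ and with the monoidal units. Naturality in $Z$ is a consequence of the naturality of the pullback maps (Proposition~\ref{pullbackvbgrpdmsc},~\ref{pullbackvbgrpdmsce}) and of the pushforward maps (Proposition~\ref{satzmsceigpushmaps},~\ref{satzmsceigpushmapsb}): a morphism $Z\to Z'$ induces intertwiners of all the bundles $\varrho_\Sigma$, and the iterated integral is built entirely from pullback and pushforward operations that commute with these intertwiners. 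The main obstacle is thus not any single estimate but the coherent bookkeeping of matching the composite spans and homotopy pullbacks for \emph{morphisms} (not merely closed manifolds) with the chosen object equivalences, so that the fibrewise Cavalieri decomposition is simultaneously natural in the cobordism variable and in $Z$.
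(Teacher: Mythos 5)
Your proposal follows essentially the same route as the paper: the invariant-level computation via Lemma~\ref{lemmahtppullbackcomposition}, the transformation formula, and the generalized Cavalieri's principle applied to the projection $P$, extended to objects and morphisms by the "tedious but direct" fibrewise bookkeeping that the paper only asserts. Your sketch of that extension (iterated limits on objects, fibrewise Cavalieri on morphisms, additivity for monoidality) is consistent with what the paper leaves implicit.
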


	\begin{remark}
	The isomorphisms $(\mu \circ \lambda)_*  \cong \mu_* \circ \lambda_*$ appearing in Theorem~\ref{thmgluinglawpush} fulfill coherence conditions, i.e.\ they are part of the data of a 2-functor
	\begin{align}
	\catf{FinGrp} \to \Grpd 
	\end{align} from the the category of finite groups (seen as a bicategory with only trivial 2-morphisms) to the bicategory of groupoids, functors and natural isomorphisms. This 2-functor sends a finite group $G$ to the groupoid of $G$-equivariant topological field theories and a group morphism $\lambda : G \to H$ to the push functor $\lambda_*$. 
	\end{remark}
	
	From the sequence $1 \to G \to 1$ of group morphisms we immediately obtain: 
	
	\begin{corollary}[]\label{koresssurj}
	Let $G$ be a finite group. Then the orbifold construction \begin{align}\frac{?}{G} : \HSym(G\text{-}\Cob(n),\Vect_K) \to \Sym(\Cob(n),\Vect_K)\end{align} is essentially surjective, i.e.\ any topological field theory can be seen as an orbifold theory of an equivariant topological field theory for any given group. 
	\end{corollary}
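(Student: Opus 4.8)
The plan is to deduce the statement directly from the composition law of Theorem~\ref{thmgluinglawpush}, applied to the factorization of the identity morphism of the trivial group through $G$. Write $\lambda : 1 \to G$ for the inclusion of the trivial group and $\mu : G \to 1$ for the terminal morphism, so that $\mu\circ\lambda = \id_1$ while $\mu_* = ?/G$ by the very definition of the orbifold construction. I also use the equivalence $1\text{-}\Cob(n)\cong\Cob(n)$ of Remark~\ref{bmkgtftsmsc} to identify $\HSym(1\text{-}\Cob(n),\Vect_K)$ with $\Sym(\Cob(n),\Vect_K)$, so that both $\lambda_*$ and $\mu_*$ have the expected source and target.

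Given an arbitrary topological field theory $W\in\Sym(\Cob(n),\Vect_K)$, I would set $Z:=\lambda_* W\in\HSym(G\text{-}\Cob(n),\Vect_K)$ and compute its orbifold theory using Theorem~\ref{thmgluinglawpush}:
\begin{align}
\frac{Z}{G} = \mu_* Z = \mu_*\lambda_* W \cong (\mu\circ\lambda)_* W = (\id_1)_* W .
\end{align}
Thus the whole content is reduced to identifying $(\id_1)_*$ with the identity functor. This is the one point that needs a check rather than a bare citation. It follows most cleanly from the $2$-functoriality recorded in the remark after Theorem~\ref{thmgluinglawpush}: a $2$-functor $\catf{FinGrp}\to\Grpd$ sends the identity $1$-morphism of the trivial group to a functor coherently isomorphic to $\id$. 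Alternatively one verifies it by hand: for $\lambda=\id_G$ the homotopy fiber of the identity functor $\Pi(\Sigma,BG)\to\Pi(\Sigma,BG)$ over an object $\varphi$ is canonically equivalent to the trivial groupoid $\star$, so the change to equivariant coefficients places the single fibre $Z(\Sigma,\varphi)$ over $\star$ and the parallel section functor returns this vector space unchanged, the action on morphisms being the identity by Remark~\ref{bmkmscpushmap}, \ref{bmkmscpushmap2}. Either way $(\id_1)_* W\cong W$, so $Z/G\cong W$ exhibits $W$ as the orbifold theory of the $G$-equivariant theory $Z=\lambda_* W$, and $?/G$ is essentially surjective.

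The main --- and essentially only --- obstacle is the bookkeeping in this last identification: one must confirm that the natural isomorphisms furnished by Theorem~\ref{thmgluinglawpush} are compatible with the canonical identifications $\mu\circ\lambda=\id_1$ and $1\text{-}\Cob(n)\cong\Cob(n)$, so that the resulting isomorphism $Z/G\cong W$ is a genuine isomorphism of topological field theories and not merely an abstract statement about source and target groupoids. Granting that, the corollary is immediate from the composition law.
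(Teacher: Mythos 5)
Your proposal is correct and is exactly the paper's argument: the authors deduce the corollary ``from the sequence $1 \to G \to 1$ of group morphisms'' via Theorem~\ref{thmgluinglawpush}, i.e.\ $\mu_*\lambda_* \cong (\mu\circ\lambda)_* = (\id_1)_* \cong \id$, so every $W$ is isomorphic to $(\lambda_* W)/G$. Your extra verification that $(\id_1)_*$ is naturally isomorphic to the identity (via the trivial homotopy fibers and Remark~\ref{bmkmscpushmap}, \ref{bmkmscpushmap2}) is a detail the paper leaves implicit but is entirely consistent with its reasoning.
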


\begin{remark}
	In \cite{stolzteichner} the idea is advocated to understand homotopy quantum field theories with target space $T$ (or certain equivalence classes thereof) as the value of a generalized cohomology theory on $T$. Indeed, a map $f: S \to T$ of target spaces give rise to an obvious pullback functor
	\begin{align}
	f^* : \HSym(T\text{-}\Cob(n),\Vect_K) \to \HSym(S\text{-}\Cob(n),\Vect_K) . 
	\end{align} It is, however, a lot more difficult to exhibit a pushforward map 
	\begin{align}
	f_* : \HSym(S\text{-}\Cob(n),\Vect_K) \to \HSym(T\text{-}\Cob(n),\Vect_K) \label{eqnpushmapsmap}
	\end{align} going in the opposite direction. The special case $T=\star$, i.e.\ the push to the point, should be understood as orbifoldization. 
	If $S$ and $T$ are aspherical spaces with finite fundamental group, then our pushforward provides a rigorous construction of all push maps \eqref{eqnpushmapsmap} because up to homotopy all maps $f: S \to T$ are induced by group morphisms. 
	\end{remark}

\begin{appendix}

\section{Groupoid cardinality and its integration theory\label{secgrpdkardint}}
The groupoid cardinality is a rational number that is assigned to an essentially finite groupoid. For more background on the groupoid cardinality we refer to \cite{baezgroup}.

For the characterization of groupoid cardinality we need the notion of a covering of groupoids.
Coverings of simplicial sets are defined in \cite[Appendix~I.2]{gabrielzisman}.
If we specialize to nerves of groupoids and take into account that these are always 2-coskeletal we arrive at the definition of covering of groupoids below. By $\Delta_n$ we will denote the standard simplex. Note that $\Delta_0=\star$ is the terminal object in the category of simplicial sets.
We also denote this object by 0 whenever we want to identify it with the zero vertex of $\Delta_1$ via the inclusion $0 \to \Delta_1$. For a groupoid $\Gamma$ we denote by $B\Gamma$ its nerve.

\begin{definition}[Covering of groupoids]
A functor $Q: \Gamma \to \Omega$ between (small) groupoids is called \emph{covering} if it is surjective on objects and if in any commuting square in $\sSet$ of the form
\begin{center}
\begin{tikzpicture}[scale=1.5]
\node (A1) at (0,1) {$0$};
\node (A2) at (1,1) {$B\Gamma$};
\node (B1) at (0,0) {$\Delta_1$};
\node (B2) at (1,0) {$B\Omega$};
\path[->,font=\scriptsize, right hook->]
(A1) edge node[left]{$$} (B1);
\path[->,font=\scriptsize]
(A1) edge node[above]{$$} (A2)
(B1) edge node[above]{$$} (B2)
(A2) edge node[right]{$BQ$} (B2);
\path[->,font=\scriptsize,dashed]
(B1) edge node[above]{$\exists!$} (A2);
\end{tikzpicture}
\end{center} the indicated lift exists and is unique (\emph{unique path lifting property}). The latter condition can be reformulated by requiring that for every morphism $g : y_0 \to y_1$ in $\Omega$ and any given $x_0$ with $Q(x_0)=y_0$, there is a unique morphism $g^* : x_0 \to x_1$ such that $Q(g^*)=g$. We say that a covering $Q: \Gamma \to \Omega$ is \emph{$n$-fold} or \emph{$n$-sheeted} if $Q^{-1}(y)$ contains $n$ objects for every $y \in \Omega$. 
\end{definition}

\spaceplease

\begin{remark}\label{bmkmscrestrictionofcoverings}
In the so-called \emph{canonical model structure} on the category of (small) groupoids, see \cite[14.1]{bousfieldcatmod}, the fibrations are the so-called \emph{isofibrations}, i.e.\ functors $Q:\Gamma \to \Omega$ between groupoids such that the lift in the diagram
\begin{center}
\begin{tikzpicture}[scale=1.5]
\node (A1) at (0,1) {$0$};
\node (A2) at (1,1) {$B\Gamma$};
\node (B1) at (0,0) {$\Delta_1$};
\node (B2) at (1,0) {$B\Omega$};
\path[->,font=\scriptsize, right hook->]
(A1) edge node[left]{$$} (B1);
\path[->,font=\scriptsize]
(A1) edge node[above]{$$} (A2)
(B1) edge node[above]{$$} (B2)
(A2) edge node[right]{$BQ$} (B2);
\path[->,font=\scriptsize,dashed]
(B1) edge node[above]{$$} (A2);
\end{tikzpicture}
\end{center} always exists. Uniqueness is not required. Hence, a covering is a special type of isofibration. 
\end{remark}

\noindent Homotopy fibers give us a source of coverings.

\begin{lemma}[]\label{lemmacoveringslicegroupoids}
Let $\Phi: \Gamma \to \Omega$ be a functor between small groupoids. Then for $y\in \Omega$ the forgetful functor
$\Phi^{-1}[y] \to \Gamma_y$ is an $|\!\Aut(y)|$-fold covering. 
\end{lemma}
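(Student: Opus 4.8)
The plan is to unwind the definition of the homotopy fiber from Definition~\ref{defhomotopypullbackofk} and then check the two defining conditions of a covering by hand. Specialising that definition to the cospan $\Gamma \xrightarrow{\Phi} \Omega \longleftarrow \star$ picking out $y$, the homotopy fiber $\Phi^{-1}[y]$ has as objects the pairs $(x,\eta)$ with $x\in\Gamma$ and an isomorphism $\eta:\Phi(x)\xrightarrow{\cong} y$, and as morphisms $(x,\eta)\to(x',\eta')$ those arrows $f:x\to x'$ of $\Gamma$ satisfying the compatibility $\eta'\circ\Phi(f)=\eta$. The forgetful functor $Q$ sends $(x,\eta)\mapsto x$ and $f\mapsto f$; its essential image consists exactly of the objects $x$ with $\Phi(x)\cong y$, so I read $\Gamma_y$ as the full subgroupoid of $\Gamma$ on such objects. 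With these identifications the remaining work is a short verification.

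First I would record surjectivity on objects: every $x\in\Gamma_y$ admits at least one isomorphism $\eta:\Phi(x)\xrightarrow{\cong} y$, so $(x,\eta)$ is an object of $\Phi^{-1}[y]$ lying over $x$. The heart of the argument is then the unique path lifting property. Given a morphism $g:x\to x'$ in $\Gamma_y$ and a chosen lift $(x,\eta)$ of its source, a lift of $g$ beginning at $(x,\eta)$ must be an arrow $(x,\eta)\to(x',\eta')$ whose underlying morphism is $g$; the compatibility condition then reads $\eta'\circ\Phi(g)=\eta$, which has the unique solution $\eta'=\eta\circ\Phi(g)^{-1}$. Hence there is exactly one lift, namely the arrow $g$ viewed as a morphism $(x,\eta)\to(x',\eta\circ\Phi(g)^{-1})$, giving both existence and uniqueness. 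This is precisely the unique path lifting property in the definition of a covering, so $Q$ is a covering.

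Finally, to count the sheets I would compute the fiber $Q^{-1}(x)$ over an object $x\in\Gamma_y$: it is the set of isomorphisms $\eta:\Phi(x)\xrightarrow{\cong} y$, and since $\Phi(x)\cong y$ this set is a torsor under post-composition with $\Aut(y)$, hence has exactly $|\!\Aut(y)|$ elements, uniformly in $x$. Therefore $Q$ is an $|\!\Aut(y)|$-fold covering. Since the content is essentially bookkeeping, there is no deep obstacle; the only points demanding care are getting the direction of the compatibility condition right, so that the value of $\eta'$ is genuinely forced rather than merely constrained, and confirming that the fiber count is the same $|\!\Aut(y)|$ for every object of the base — which holds because $\Phi(x)\cong y$ for all $x$ in $\Gamma_y$ by the very definition of that subgroupoid.
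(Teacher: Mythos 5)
Your proof is correct; the paper states this lemma without proof, and your argument --- unwinding the homotopy fiber as pairs $(x,\eta)$ with $\eta:\Phi(x)\cong y$, checking surjectivity on objects, verifying unique path lifting via the forced solution $\eta'=\eta\circ\Phi(g)^{-1}$, and counting each fiber as a nonempty $\Aut(y)$-torsor --- is exactly the routine verification intended. Your one interpretive choice, reading $\Gamma_y$ as the full subgroupoid of $\Gamma$ on objects $x$ with $\Phi(x)\cong y$ (the essential image of the forgetful functor), is the right one and is consistent with the paper's use of this notation in the proof of the generalized Cavalieri principle.
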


\begin{definition}[Essentially finite groupoid]\label{defessentiallyfinitegroupoid}
A groupoid $\Gamma$ is called \emph{essentially finite} if $\pi_0(\Gamma)$ and $\Aut(x)$ for every $x\in \Gamma$ are finite. By $\FinGrpd$ we denote the \emph{category of essentially finite groupoids}. 
\end{definition}

\noindent Both the definition as well as the most important properties of groupoid cardinality can be summarized as follows: 

\begin{proposition}[Groupoid cardinality]
There is exactly one assignment $|?|$ of a rational number to each essentially finite groupoid satisfying the following conditions:
\begin{myenumerate}
\item[\normalfont (N)] For the groupoid $\star$ with one object and trivial automorphism group we have $|\star|=1$.
\item[\normalfont (E)] For equivalent essentially finite groupoids $\Gamma$ and $\Omega$ the equality $|\Gamma|=|\Omega|$ holds.
\item[\normalfont (U)] For the disjoint union $\Gamma \coprod \Omega$ of essentially finite groupoids we have $|\Gamma \coprod \Omega|=|\Gamma|+|\Omega|$.
\item[\normalfont (C)] For any $n$-fold covering $Q: \Gamma \to \Omega$ of essentially finite groupoids we have $|\Gamma|=n|\Omega|$. 
\end{myenumerate} This number is called groupoid cardinality. For an essentially finite groupoid $\Gamma$ it can be computed by
\begin{align} |\Gamma| = \sum_{[x] \in \pi_0(\Gamma)} \frac{1}{|\! \Aut(x)|}.\end{align} Moreover, for essentially finite groupoids $\Gamma$ and $\Omega$ the product $\Gamma \times \Omega$ is also essentially finite and its groupoid cardinality is given by $|\Gamma \times \Omega | = |\Gamma| |\Omega|$.
\end{proposition}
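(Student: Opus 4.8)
The plan is to establish the proposition in two halves: first \textbf{uniqueness}, by showing that the four axioms force the closed formula, and then \textbf{existence}, by \emph{defining} $|\Gamma|$ through that formula and checking it satisfies (N)--(C). The product formula then follows by a short direct computation.

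For uniqueness, suppose $|?|$ is any assignment satisfying (N)--(C). By (E) I may replace $\Gamma$ by a skeleton, so that $\Gamma \cong \coprod_{[x]\in\pi_0(\Gamma)} \star//\Aut(x)$ is a disjoint union of one-object groupoids; by (U) it then suffices to compute $|\star//G|$ for a finite group $G$. The key step is to exhibit the translation groupoid $G//G$ --- the action groupoid of $G$ acting on itself by left multiplication --- which has object set $G$ and exactly one morphism between any two objects, hence is equivalent to $\star$, and to observe that the projection $G//G \to \star//G$ is an $|G|$-fold covering: unique path lifting holds because a loop $g$ at the single object of $\star//G$ lifts, from any chosen source $x_0 \in G$, to the unique morphism $x_0 \to g x_0$ labelled by $g$. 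Then (E) and (N) give $|G//G| = |\star| = 1$, while (C) gives $|G//G| = |G|\cdot|\star//G|$, so $|\star//G| = 1/|G|$. Combined with (U) this yields $|\Gamma| = \sum_{[x]} 1/|\Aut(x)|$, pinning the assignment down uniquely.

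For existence I define $|\Gamma| := \sum_{[x]\in\pi_0(\Gamma)} 1/|\Aut(x)|$ (a finite rational number since $\Gamma$ is essentially finite) and verify the axioms. Conditions (N) and (E) are immediate, since the right-hand side depends only on the set $\pi_0(\Gamma)$ and the isomorphism types of the automorphism groups, both equivalence invariants; (U) holds because $\pi_0$ and the automorphism groups of a disjoint union are the evident disjoint unions. The one genuinely substantial axiom is \textbf{(C)}, which I expect to be the main obstacle. Using the already-verified (U) I would first reduce to a connected base $\Omega$, since the preimage of each component of $\Omega$ under an $n$-fold covering is a union of components of $\Gamma$, again covered $n$-fold. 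For connected $\Omega$ I pick an object $y$ and set $H := \Aut(y)$; using the already-established (E) I may assume $\Omega = \star//H$ on the nose. Unique path lifting then turns the fiber $F := Q^{-1}(\star)$, of cardinality $n$, into an $H$-set (each $h\in H$ sends $x$ to the target of its unique lift starting at $x$, and uniqueness of lifts makes this a genuine action), and matching hom-sets identifies $\Gamma \cong F//H$, whose components are the $H$-orbits and whose automorphism group at $x$ is the stabiliser $\operatorname{Stab}_H(x)$. The orbit-stabiliser theorem then converts the defining sum into
\begin{align}
|\Gamma| = \sum_{\text{orbits } O \subseteq F} \frac{1}{|\operatorname{Stab}_H(x_O)|} = \sum_{O} \frac{|O|}{|H|} = \frac{|F|}{|H|} = \frac{n}{|H|} = n\,|\star//H| = n\,|\Omega|,
\end{align}
which is exactly (C).

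Finally, the product formula follows directly from the closed expression: $\Gamma\times\Omega$ is essentially finite because $\pi_0(\Gamma\times\Omega) = \pi_0(\Gamma)\times\pi_0(\Omega)$ is finite and $\Aut(x,y) = \Aut(x)\times\Aut(y)$ is finite, and the sum over pairs factors as $\big(\sum_{[x]} 1/|\Aut(x)|\big)\big(\sum_{[y]} 1/|\Aut(y)|\big) = |\Gamma|\,|\Omega|$. The crux of the whole argument is the covering axiom (C); once the correct $H$-set description of a covering over a connected groupoid is in hand, everything reduces to orbit-stabiliser bookkeeping, and the remaining axioms together with the product formula are routine.
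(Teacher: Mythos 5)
The paper itself gives no proof of this proposition: it is stated as a summary of known facts, with the reference \cite{baezgroup} for background, so there is nothing internal to compare against. Your argument is the standard one (uniqueness via the $|G|$-fold covering $G//G \to \star//G$ together with (N), (E), (U); existence by verifying the axioms for the explicit formula, with (C) as the only nontrivial check), and it is essentially correct, including the key observations that $G//G \simeq \star$ and that the projection has unique path lifting.

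One step needs repair. In the verification of (C) you write that, for connected $\Omega$, ``using the already-established (E) I may assume $\Omega = \star//H$ on the nose.'' Axiom (E) does not license this: it only says the \emph{number} $|\Omega|$ equals $|\star//H|$; it does not let you replace the base of the given covering $Q$ by an equivalent groupoid, and composing $Q$ with an equivalence $\Omega \to \star//H$ changes the sheet count (the preimage of $\star$ becomes all of $\operatorname{Ob}\Gamma$, which can even be infinite, since essential finiteness does not bound the object set of $\Omega$). The correct route, which is only a small modification of what you wrote, is to work over the original $\Omega$: fix $y \in \Omega$, set $H := \Aut(y)$ and $F := Q^{-1}(y)$ (the strict preimage, of cardinality $n$). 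Unique path lifting makes $F$ an $H$-set exactly as you describe, and matching hom-sets identifies the \emph{full subgroupoid of $\Gamma$ on $F$} with $F//H$. You then need one further observation you do not state: this full subgroupoid is equivalent to all of $\Gamma$, because for any $x \in \Gamma$ connectedness of $\Omega$ gives an isomorphism $Q(x) \cong y$, whose unique lift carries $x$ isomorphically onto an object of $F$. With that essential-surjectivity step supplied, your orbit--stabiliser computation $|\Gamma| = |F//H| = |F|/|H| = n/|H| = n|\Omega|$ goes through verbatim, and the rest of the proof (uniqueness, the remaining axioms, and the product formula) is fine as written.
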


\noindent The groupoid cardinality gives rise to a rather primitive, but useful integration theory. 

\begin{definition}[Integral of invariant functions over groupoids with respect to groupoid cardinality]
An \emph{invariant function} $f$ on a groupoid $\Gamma$ with values in a vector space $V$ over a field of characteristic zero is the assignment of a vector $f(x)\in V$ to each $x\in \Gamma$ such that $f(x)=f(y)$ if $x\cong y$ in $\Gamma$. If $\Gamma$ is essentially finite, we define by
\begin{align} \int_\Gamma f = \int_\Gamma f(x) \,\td x := \sum_{[x]\in \pi_0(\Gamma)} \frac{f(x)}{|\! \Aut(x)|} \in V \end{align} the \emph{integral of $f$ over $\Gamma$}. 
\end{definition}

\begin{remarks}\label{bmkintegralofinvariantfunctionsmsc}

\item The integral is a linear functional on the vector space of invariant functions.\label{bmkintegralofinvariantfunctionsmsc1}

\item It is clear that an invariant function $f$ on a groupoid $\Omega$ can be pulled back along a functor $\Phi : \Gamma \to \Omega$ between groupoids. The result $\Phi^* f$ is an invariant function on $\Gamma$.

\end{remarks}

\noindent We need the following results that we call, in allusion to results known for instance from Lebesgue integration theory, transformation formula and Cavalieri's principle. The transformation formula explains how the integral behaves under pullback along equivalences. Its proof follows directly from the definitions. 

\begin{proposition}[Transformation formula]\label{satzmsctransformationformula}
Let $\Phi : \Gamma \to \Omega$ be an equivalence of essentially finite groupoids and $f: \Omega \to V$ an invariant function taking values in a vector space $V$ over a field of characteristic zero. Then the transformation formula
\begin{align} \int_\Gamma \Phi^* f = \int_\Omega f\end{align} holds. 
\end{proposition}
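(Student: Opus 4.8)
The plan is to compare the two sums defining the integrals term by term, exploiting the fact that an equivalence of groupoids matches up both isomorphism classes and automorphism groups. First I would unwind the definitions: by the definition of the pullback of an invariant function (Remark~\ref{bmkintegralofinvariantfunctionsmsc}) we have $(\Phi^* f)(x) = f(\Phi(x))$ for every $x \in \Gamma$, so that
\begin{align}
\int_\Gamma \Phi^* f = \sum_{[x]\in \pi_0(\Gamma)} \frac{f(\Phi(x))}{|\!\Aut(x)|}.
\end{align}
The goal is then to identify the right-hand side with $\sum_{[y]\in\pi_0(\Omega)} f(y)/|\!\Aut(y)| = \int_\Omega f$.

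The key input is that $\Phi$, being an equivalence, is fully faithful and essentially surjective. Essential surjectivity together with faithfulness on isomorphism classes ensures that $\Phi$ induces a bijection $\pi_0(\Gamma) \xrightarrow{\cong} \pi_0(\Omega)$, $[x] \mapsto [\Phi(x)]$. Full faithfulness moreover gives, for each object $x\in\Gamma$, a group isomorphism $\Aut(x) \xrightarrow{\cong} \Aut(\Phi(x))$ induced by $\Phi$, whence $|\!\Aut(x)| = |\!\Aut(\Phi(x))|$. Since $\Gamma$ and $\Omega$ are essentially finite (Definition~\ref{defessentiallyfinitegroupoid}) these orders are finite, so the equality is meaningful and both integrals are defined.

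With these two facts in hand the computation is immediate: reindexing the sum over $\pi_0(\Gamma)$ along the bijection $[x]\mapsto[\Phi(x)]$, replacing $|\!\Aut(x)|$ by $|\!\Aut(\Phi(x))|$, and using that $f$ is invariant so that its value on the class $[\Phi(x)]$ may be computed on the representative $\Phi(x)$, one obtains
\begin{align}
\sum_{[x]\in\pi_0(\Gamma)} \frac{f(\Phi(x))}{|\!\Aut(x)|} = \sum_{[y]\in\pi_0(\Omega)} \frac{f(y)}{|\!\Aut(y)|},
\end{align}
which is exactly $\int_\Omega f$.

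There is no genuine obstacle here: the statement is a bookkeeping consequence of the explicit formula for groupoid cardinality and the two standard properties of equivalences. The only points requiring a modicum of care are to confirm that the invariance hypothesis on $f$ is precisely what licenses evaluating $f$ on the chosen representatives $\Phi(x)$ rather than on arbitrary representatives of $[\Phi(x)]$, and to note that the two finiteness conditions of essential finiteness are preserved under equivalence, so that both sides are well-defined before they are compared.
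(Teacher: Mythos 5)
Your argument is correct and is precisely the verification the paper has in mind: the paper omits the proof with the remark that it "follows directly from the definitions," and your computation — using that an equivalence induces a bijection on $\pi_0$ and isomorphisms of automorphism groups, then reindexing the sum — is exactly that direct verification.
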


\noindent Cavalieri's principle from ordinary integration theory states that we can compute the volume of an object by summing (or rather integrating) the volume of its slices. The same is possible for groupoid cardinality instead of the volume.

\begin{proposition}[Cavalieri's principle]\label{satzmsccavalierisprinciple}
Let $\Phi : \Gamma \to \Omega$ be a functor of essentially finite groupoids. Then
\begin{align}
|\Gamma| = \int_\Omega |\Phi^{-1}[y]|\,\td y.
\end{align}
\end{proposition}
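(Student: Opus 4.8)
The plan is to reduce the identity to the explicit formula for groupoid cardinality combined with Lemma~\ref{lemmacoveringslicegroupoids} and the covering axiom (C). First I would write out the right-hand side using the definition of the integral of an invariant function and the explicit summation formula for groupoid cardinality, obtaining
\begin{align}
\int_\Omega |\Phi^{-1}[y]|\,\td y = \sum_{[y]\in\pi_0(\Omega)} \frac{|\Phi^{-1}[y]|}{|\!\Aut(y)|}.
\end{align}
The decisive step is then to evaluate the numerator $|\Phi^{-1}[y]|$. By Lemma~\ref{lemmacoveringslicegroupoids}, the forgetful functor $\Phi^{-1}[y]\to\Gamma_y$ onto the full subgroupoid $\Gamma_y\subseteq\Gamma$ of those objects $x$ with $\Phi(x)\cong y$ is an $|\!\Aut(y)|$-fold covering. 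Applying the covering axiom (C) from the characterisation of groupoid cardinality therefore gives $|\Phi^{-1}[y]| = |\!\Aut(y)|\cdot|\Gamma_y|$.

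Substituting this into the sum, the factor $|\!\Aut(y)|$ cancels, so that
\begin{align}
\int_\Omega |\Phi^{-1}[y]|\,\td y = \sum_{[y]\in\pi_0(\Omega)} |\Gamma_y|.
\end{align}
To finish I would observe that, as $[y]$ ranges over $\pi_0(\Omega)$, the subgroupoids $\Gamma_y$ partition $\Gamma$ up to equivalence: every object $x\in\Gamma$ lies in exactly one $\Gamma_y$, namely the one with $[y]=[\Phi(x)]$, and no morphism of $\Gamma$ connects objects lying over distinct components of $\Omega$. Hence $\Gamma\cong\coprod_{[y]\in\pi_0(\Omega)}\Gamma_y$, with empty contributions for those $[y]$ outside the image of $\pi_0(\Phi)$, and additivity (U) yields $\sum_{[y]}|\Gamma_y|=|\Gamma|$. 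Equivalently, one checks directly that
\begin{align}
\sum_{[y]\in\pi_0(\Omega)}|\Gamma_y| = \sum_{[y]\in\pi_0(\Omega)}\ \sum_{\substack{[x]\in\pi_0(\Gamma)\\ \Phi(x)\cong y}} \frac{1}{|\!\Aut(x)|} = \sum_{[x]\in\pi_0(\Gamma)} \frac{1}{|\!\Aut(x)|} = |\Gamma|.
\end{align}
Combining the two displays gives the asserted equality.

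I do not expect a genuine obstacle here; the construction is essentially a bookkeeping argument in the groupoid-cardinality axioms. The one point requiring a little care is the correct identification of the base $\Gamma_y$ of the covering supplied by Lemma~\ref{lemmacoveringslicegroupoids} as the full preimage subgroupoid of the component $[y]$, together with the verification that these subgroupoids reassemble, up to equivalence, into all of $\Gamma$. Everything else is a formal cancellation followed by an application of additivity.
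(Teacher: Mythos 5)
Your argument is correct. It is worth pointing out, though, that the paper itself gives no direct proof of Proposition~\ref{satzmsccavalierisprinciple}: it declares the proof omitted on the grounds that the statement is the special case $f=1$ of the generalized Cavalieri principle (Proposition~\ref{satzmscgencavalierisprinciple}) --- whose own proof, however, invokes the ungeneralized version as its first step. So what you have written is precisely the missing base case that makes that chain of reductions non-circular, and your method is the natural one: unfold the integral into a sum over $\pi_0(\Omega)$, use Lemma~\ref{lemmacoveringslicegroupoids} together with axiom (C) to get $|\Phi^{-1}[y]|=|\!\Aut(y)|\cdot|\Gamma_y|$, cancel, and reassemble $\Gamma\cong\coprod_{[y]}\Gamma_y$ via additivity (U). This is the same technique the paper deploys in the proofs it does write out in the appendix (the reduction to $\delta$-functions and restriction to full subgroupoids in Propositions~\ref{satzmscgencavalierisprinciple} and~\ref{lemmapromscadjointslice}), so your proof fits seamlessly into the paper's toolkit. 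The only point deserving the care you already give it is the identification of the base $\Gamma_y$ of the covering in Lemma~\ref{lemmacoveringslicegroupoids} as the full subgroupoid on objects $x$ with $\Phi(x)\cong y$, and the observation that no morphism of $\Gamma$ joins objects lying over non-isomorphic objects of $\Omega$, so that the $\Gamma_y$ really do decompose $\Gamma$ as a disjoint union; both checks are as you state them.
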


\noindent We omit the proof because the above assertion is a special case of:

\begin{proposition}[Generalized Cavalieri's principle]\label{satzmscgencavalierisprinciple}
Let $\Phi : \Gamma \to \Omega$ be a functor of essentially finite groupoids and $f: \Gamma  \to V$ an invariant function taking values in a vector space over characteristic zero. Then
\begin{align}
\int_\Gamma f = \int_\Omega \int_{\Phi^{-1}[y]} q_y^* f \,\td y,
\end{align} where $q_y : \Phi^{-1}[y] \to \Gamma$ is the canonical functor from the homotopy fiber over $y$ to $\Gamma$. 
\end{proposition}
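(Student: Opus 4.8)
The plan is to peel off the outer integral via its definition and then evaluate the inner integral $\int_{\Phi^{-1}[y]} q_y^* f$ by exploiting the covering structure of the homotopy fibers. First I would write, directly from the definition of the integral of an invariant function,
\begin{align}
\int_\Omega \int_{\Phi^{-1}[y]} q_y^* f \,\td y = \sum_{[y]\in\pi_0(\Omega)} \frac{1}{|\!\Aut(y)|} \int_{\Phi^{-1}[y]} q_y^* f ,
\end{align}
so that the whole claim reduces to computing each inner integral and then reorganising the resulting sum over $\pi_0(\Omega)$.

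The central step is a refinement of the covering axiom (C) that keeps track of an invariant function. Let $\Gamma_y$ denote the full subgroupoid of $\Gamma$ on those objects $x$ with $\Phi(x)\cong y$; by Lemma~\ref{lemmacoveringslicegroupoids} the forgetful functor $q_y : \Phi^{-1}[y]\to\Gamma_y$ is an $|\!\Aut(y)|$-fold covering, and since it sends $(x,\eta)\mapsto x$ the pulled-back function $q_y^* f$ is the pullback along $q_y$ of the restriction $f|_{\Gamma_y}$. I would then establish the weighted covering formula
\begin{align}
\int_{\Phi^{-1}[y]} q_y^* f = |\!\Aut(y)| \int_{\Gamma_y} f ,
\end{align}
whose specialisation to $f\equiv 1$ is exactly axiom (C).

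This formula is where the real work lies, and I would prove it by fiber-counting. Fixing a class $[x]\in\pi_0(\Gamma_y)$ with representative $x$, the objects of $\Phi^{-1}[y]$ lying over $x$ are the pairs $(x,\eta)$ with $\eta:\Phi(x)\xrightarrow{\cong}y$; two of them are isomorphic precisely when they differ by an element of the image $\Phi(\Aut(x))$ acting freely on the $\Aut(y)$-torsor of such $\eta$, so they form $|\!\Aut(y)|/|\Phi(\Aut(x))|$ isomorphism classes, each with automorphism group $\Aut_0(x)=\ker(\Aut(x)\to\Aut(\Phi(x)))$ as in Remark~\ref{bmkmscpushmap}. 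Hence the contribution of $[x]$ to $\int_{\Phi^{-1}[y]} q_y^* f$ is $\frac{|\!\Aut(y)|}{|\Phi(\Aut(x))|}\cdot\frac{f(x)}{|\!\Aut_0(x)|}$, and the identity $|\!\Aut(x)|=|\Phi(\Aut(x))|\cdot|\!\Aut_0(x)|$ collapses this to $|\!\Aut(y)|\,f(x)/|\!\Aut(x)|$; summing over $[x]\in\pi_0(\Gamma_y)$ yields the weighted formula. This orbit bookkeeping is the only genuinely computational ingredient, and it is the main obstacle.

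With the weighted formula in hand the conclusion is formal. Substituting it into the first display, the factor $|\!\Aut(y)|$ cancels the weight $1/|\!\Aut(y)|$, leaving $\sum_{[y]\in\pi_0(\Omega)}\int_{\Gamma_y} f$. Finally I would observe that the subgroupoids $\Gamma_y$ partition the objects of $\Gamma$ up to isomorphism: each $[x]\in\pi_0(\Gamma)$ lies in $\Gamma_y$ for the unique class $[y]=[\Phi(x)]$, and $\Aut_{\Gamma_y}(x)=\Aut_\Gamma(x)$ because $\Gamma_y$ is full. Therefore $\sum_{[y]}\int_{\Gamma_y} f = \sum_{[x]\in\pi_0(\Gamma)} f(x)/|\!\Aut(x)| = \int_\Gamma f$, which is the desired identity.
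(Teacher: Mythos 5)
Your proof is correct, but it takes a genuinely different route from the paper's. The paper reduces to the case $f=\delta_{[x]}$, restricts $\Phi$ to the full subgroupoid $\Gamma_x$ of objects isomorphic to $x$, and invokes the plain Cavalieri's principle (Proposition~\ref{satzmsccavalierisprinciple}) for $\Phi|_{\Gamma_x}$, identifying $(\Phi|_{\Gamma_x})^{-1}[y]$ with the appropriate full subgroupoid of $\Phi^{-1}[y]$; the whole argument is two lines once the unweighted principle is granted. You instead bypass the unweighted principle entirely and prove the weighted covering identity $\int_{\Phi^{-1}[y]} q_y^* f = |\!\Aut(y)|\int_{\Gamma_y} f$ by an explicit orbit count over the $\Aut(y)$-torsor of isomorphisms $\Phi(x)\cong y$, using $|\!\Aut(x)|=|\Phi(\Aut(x))|\cdot|\!\Aut_0(x)|$, and then reassemble $\int_\Gamma f$ from the partition of $\pi_0(\Gamma)$ by the $\Gamma_y$. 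All the counting steps check out (the classes of $\Phi^{-1}[y]$ over a fixed $[x]$ are indeed $|\!\Aut(y)|/|\Phi(\Aut(x))|$ in number, each with automorphism group $\Aut_0(x)$). What your approach buys is self-containedness: the paper omits the proof of Proposition~\ref{satzmsccavalierisprinciple} on the grounds that it is a special case of the generalized statement, yet then uses it in the proof of that generalization; your argument, specialized to $f\equiv 1$, supplies the missing independent proof of the unweighted principle and so closes that loop. Note also that your weighted covering formula is essentially Lemma~\ref{lemmacoveringslicegroupoids} combined with Proposition~\ref{lemmapromscadjointslice}, so you could have shortened the computational core by citing the latter; deriving it by hand is harmless but duplicates work the paper does elsewhere.
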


\begin{proof}
Without loos of generality we may assume that $V$ is the ground field and $f= \delta_{[x]}$ for some $x\in \Gamma$. Then we can restrict $\Phi$ to the full subgroupoid $\Gamma_x$ of objects in $\Gamma$ isomorphic to $x$ and obtain
\begin{align}
\int_\Gamma f = |\Gamma_x| = \int_{\Omega} \left| \left(\Phi|_{\Gamma_x} \right) ^{-1} [y]\right|\,\td y
\end{align} by Cavalieri's principle. But $\left(\Phi|_{\Gamma_x} \right) ^{-1} [y]$ is the full subgroupoid of $\Phi^{-1}[y]$ of all objects projecting under $q_y : \Phi^{-1}[y] \to \Gamma$ to an object isomorphic to $x$. This implies $\left| \left(\Phi|_{\Gamma_x} \right) ^{-1} [y]\right| = \int_{\Phi^{-1}[y]} q_y^* f$ and proves the claim.   
\end{proof}

\noindent The integral of invariant functions with respect to groupoid cardinality is also compatible with coverings.

\begin{proposition}[]\label{lemmapromscadjointslice}
Let $Q: \Gamma \to \Omega$ be an $n$-fold covering of essentially finite groupoids. Then for any invariant function $f: \Omega \to V$ taking values in a vector space $V$ over a field $K$ of characteristic zero the integral formula
\begin{align}
\int_\Gamma Q^* f = n \int_{\Omega} f
\end{align}holds. 
\end{proposition}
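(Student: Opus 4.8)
The plan is to reduce the statement to the defining axiom (C) of groupoid cardinality by exploiting linearity of the integral (Remark~\ref{bmkintegralofinvariantfunctionsmsc}, \ref{bmkintegralofinvariantfunctionsmsc1}). Since $\Omega$ is essentially finite, $\pi_0(\Omega)$ is a finite set, and any invariant function $f : \Omega \to V$ is a finite linear combination $f = \sum_{[y]} f(y)\,\delta_{[y]}$ of the scalar indicator functions $\delta_{[y]}$ (equal to $1$ on the isomorphism class of $y$ and $0$ elsewhere). Both sides of the claimed identity are linear in $f$, and $Q^*$ is linear, so it suffices to treat the case $V = K$ and $f = \delta_{[y]}$ for a single object $y \in \Omega$.

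For this case the right-hand side is immediate: $\int_\Omega \delta_{[y]} = 1/|\!\Aut(y)|$. For the left-hand side, observe that $Q^* \delta_{[y]}$ is exactly the indicator function of the full subgroupoid $\Gamma_{[y]} \subseteq \Gamma$ spanned by those objects $x$ with $Q(x) \cong y$, so that $\int_\Gamma Q^* \delta_{[y]} = |\Gamma_{[y]}|$. Writing $\Omega_{[y]} \subseteq \Omega$ for the full subgroupoid on the objects isomorphic to $y$, one has $\Gamma_{[y]} = Q^{-1}(\Omega_{[y]})$ and $Q$ restricts to a functor $Q| : \Gamma_{[y]} \to \Omega_{[y]}$. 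I would then check that $Q|$ is again an $n$-fold covering and invoke axiom (C) together with $|\Omega_{[y]}| = 1/|\!\Aut(y)|$ to obtain $|\Gamma_{[y]}| = n/|\!\Aut(y)| = n\int_\Omega \delta_{[y]}$, which is the desired equality.

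The one point requiring care, and the main obstacle, is verifying that $Q|$ inherits the covering property. Surjectivity on objects and the counting of $n$ objects in each fiber are immediate from the corresponding properties of $Q$, since the fibers over objects of $\Omega_{[y]}$ are unchanged. For unique path lifting, given a morphism $g : y_0 \to y_1$ in $\Omega_{[y]}$ and $x_0 \in \Gamma_{[y]}$ over $y_0$, the unique lift $g^* : x_0 \to x_1$ in $\Gamma$ has target over $y_1 \cong y$, hence lies in $\Gamma_{[y]}$, and uniqueness is inherited verbatim; thus $Q|$ is an $n$-fold covering and (C) applies. Alternatively, one could bypass the restriction argument by applying the generalized Cavalieri's principle (Proposition~\ref{satzmscgencavalierisprinciple}) to $Q$ directly: since $q_y^* Q^* f$ is the constant function $f(y)$ on the homotopy fiber $Q^{-1}[y]$, the inner integral equals $f(y)\,|Q^{-1}[y]|$, and unique path lifting forces every object of $Q^{-1}[y]$ to be isomorphic to one of the $n$ strict lifts, each with trivial automorphism group, so $|Q^{-1}[y]| = n$ and hence $\int_\Gamma Q^* f = n\int_\Omega f$ at once.
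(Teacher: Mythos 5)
Your main argument is correct and is essentially the paper's own proof: reduce by linearity to $f=\delta_{[y]}$, restrict $Q$ to the full subgroupoids over the class of $y$, check the restriction is still an $n$-fold covering, and invoke axiom (C); the only cosmetic difference is that the paper restricts to objects strictly equal to $y_0$ (the two subgroupoids are equivalent by path lifting). Your alternative via the generalized Cavalieri principle and the computation $|Q^{-1}[y]|=n$ is also sound, but the primary route matches the paper.
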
 

\begin{proof} Repeating the argument of the preceding proofs we can assume without loss of generality that $V=K$ and $f=\delta_{[y_0]}$ for some $y_0\in \Omega$. Let $\Omega_0$ be the full subgroupoid of $\Omega$ of all objects equal to $y_0$ (this is $y_0 // \Aut(y_0)$) and $Q_0 : \Gamma_0 \to \Omega_0$ the restriction of $Q$ (it is also an $n$-fold covering). Using the covering property of the groupoid cardinality we obtain
\begin{align} \int_\Gamma Q^* f = |\Gamma_0| = n|\Omega_0| = \frac{n}{|\! \Aut(y_0)|} = n \int_{\Omega} f.   \end{align}
\end{proof}

\end{appendix}

\extremespace

\end{document}